\documentclass{amsart}
\usepackage{amsmath}
\usepackage[utf8]{inputenc}
\usepackage{lmodern}
\usepackage[T1]{fontenc}
\usepackage{amssymb}
\usepackage{mathtools}

\usepackage{bm}

\usepackage[shortlabels]{enumitem}

\usepackage[final,hidelinks]{hyperref} 

\usepackage{mathrsfs}
\usepackage[all]{xy}
\usepackage{color}

\usepackage{colonequals}

\usepackage{musicography} 

\usepackage{tikz-cd}

\newtheorem{theorem}{Theorem}[section]
\newtheorem{proposition}[theorem]{Proposition}
\newtheorem{lemma}[theorem]{Lemma}
\newtheorem{corollary}[theorem]{Corollary}

\theoremstyle{definition}
\newtheorem{definition}[theorem]{Definition}
\newtheorem{bigremark}[theorem]{Remark}

\newtheorem{example}[theorem]{Example}

\newtheorem{maintheorem}{Theorem}

\newtheorem{mainthm}{Theorem}

\hypersetup{
    colorlinks=true, 
    linkcolor=blue, 
    urlcolor=red, 
    linktoc=all 
}

\newcommand{\Z}{\ensuremath{\mathbb{Z}}}
\newcommand{\C}{\ensuremath{\mathbb{C}}}

\def\FF{\mathbb{F}}
\def\NN{\mathbb{N}}

\def\Nrv{\mathsf{Nrv}}

\numberwithin{equation}{section} 

\def\kk{\Bbbk}
\def\xx{x}
\def\yy{y}

\def\Sh{\mathrm{Sh}}

\def\simgp{G}
\def\G{\mathcal{G}}
\def\p{\mathsf{p}}
\def\q{\mathsf{q}}

\def\mkd#1{\mathfrak{X}(#1)}
\def\bi{\bm{i}}

\def\C{\mathsf{C}}
\def\N{\mathsf{N}}
\def\A{\mathsf{A}}

\def\sz{\mathrm{Sz}}
\def\hatsz{\hat{\mathrm{S}}\mathrm{z}}
\def\sk{\mathrm{skel}}

\def\sgn{\mathrm{sgn}}

\def\hom{\mathrm{Hom}}

\newcommand*\cocolon{%
        \nobreak
        \mskip6mu plus1mu
        \mathpunct{}%
        \nonscript
        \mkern-\thinmuskip
        {:}%
        \mskip2mu
        \relax
}

\def\id{\mathrm{id}}

\def\PH{\mathrm{PH}}

\title[Szczarba's Twisted Shuffle and Equivariant Path Homology]{Szczarba's Twisted Shuffle and Equivariant Path Homology of Directed Graphs}

\author[X. Fu]{Xin Fu}
\address{Shanghai Institute for Mathematics and Interdisciplinary Sciences (SIMIS), Shanghai 200433, China}
\email{x.fu@simis.cn}
\thanks{X.\,Fu is supported by the National Natural Science Foundation of
China (grant no. 12501083) and the startup fund of SIMIS}

\author[S.-T. Yau]{Shing-Tung Yau}
\address{Yau Mathematical Sciences Center, Tsinghua University, Beijing 100084, China}
\email{styau@tsinghua.edu.cn}

\subjclass[2020]{Primary: 55N35, 55U10; Second: 05C25, 55N91}

\keywords{marked simplicial sets, path homology, Borel construction, Szczarba’s twisting cochain, twisted Cartesian products.}

\begin{document}
\begin{abstract}
To a marked simplicial set one can associate its path chain complex, and define its homology to be the homology of this complex, inspired by path homology theories for directed graphs, quivers, and marked categories. Given a marked simplicial set with a simplicial group action preserving the markings and degenerate 1-simplices, together with a twisting function,
we define a marked twisted Cartesian product using the box product.

Classically, Szczarba’s twisted shuffle provides a quasi-isomorphism between the chain complex of a twisted Cartesian product and the corresponding twisted tensor product. 
In this paper, we prove that in the marked setting, this map restricts to a chain isomorphism on path chain complexes. As an application,
for directed graphs with group actions, we obtain a natural Borel construction as a special case of marked twisted Cartesian products. Equivariant path homology is defined as the homology of this construction and is computed by an explicit twisted tensor product.

\end{abstract}
 \maketitle

\section{Introduction}

In recent years several homology theories for directed graphs have appeared. 
The~GLMY path homology, introduced by Grigor’yan–Lin–Muranov–Yau (with cohomological precursors due to Dimakis–Müller-Hoissen~\cite{DimakisHoissen2, DimakisHoissen1}), is constructed from allowable directed paths and satisfies homotopy invariance, the Eilenberg–Steenrod axioms and K\"unneth theorems with respect to the box product (\cite{GJMY, GLMY, GLMY2014}).
Another theory is provided by magnitude homology, defined by Hepworth and Willerton~\cite{HW2017} as a categorification of the graph invariant \emph{magnitude}~\cite{Leinster2019}. 

Although developed independently, Asao~\cite{Asao2023} related the two homologies via the \emph{Magnitude–Path Spectral Sequence} or \emph{MPSS} whose 
$E_1$-page is magnitude homology and whose $E_2$-page contains the path homology.
The target of MPSS, named \emph{reachability homology} by Hepworth and Roff~\cite{HR2025}, is itself a homology theory for directed graphs. In the sequel~\cite{HR2024},
the entire $E_2$-page is termed the \emph{bigraded path homology} and its properties are analyzed.

From a marked simplicial perspective, Ivanov and Pavutnitskiy~\cite{ivanov2024simplicial} introduced a framework for path-based homology via path pairs, situating GLMY path homology in a broader simplicial setting and producing new homology theories, including those for marked categories and marked groups.
Other developments include singular homology theories for quivers~\cite{LMWY} and cellular complex for directed graphs~\cite{TY}.

Many directed graphs admit nontrivial symmetries, and it is therefore natural to ask how to construct equivariant versions of homology that incorporate group actions. 
In classical algebraic topology, equivariant homology theories can be defined by the
Borel construction. In this paper, we propose an analogue of the Borel construction suitable for directed graphs and GLMY path homology.
Our approach builds on the marked simplicial viewpoint.
A marked simplicial set chooses certain $1$-simplices of a simplicial set to be ``marked''.
This perspective leads naturally to the notion of marked twisted Cartesian products, extending both twisted Cartesian products and Borel constructions to the marked setting. 

Concretely, we say a simplicial group $\simgp$ acts on a marked simplicial set $(F,M)$ if~$G$ acts on $F$ and the action preserves $M$. Moreover, if $gs_0(x)\in F_1$ is degenerate for any $g\in G_1$ and $x\in F_0$, a twisting function $\tau\colon X_{>0}\to \simgp$ allows one to define a marked twisted Cartesian product, denoted by
$X^{\musSharp{}}\Box_\tau (F,M).$
Its underlying simplicial set is the twisted Cartesian product $X\times_\tau F$ and the marked edges are given by~$s_0(X_0)\times M\cup X_1\times s_0(F_0).$
See details in Section~\ref{section marked twisted Cartesian products}.

In the unmarked case, Szczarba~\cite{Szczarba1961} gave an explicit formula of the twisting cochain $t_{\sz}\colon \C(X)\to \C(\simgp)$ by the twisting function~$\tau$. This yields a twisted tensor product $C(X)\otimes_{t_{\sz}}\C(F)$. 
He further constructed a chain map 
\[\psi\colon C(X)\otimes_{t_{\sz}}\C(F)\to \C(X\times_\tau F)\]
and proved that $\psi$ is a chain homotopy equivalence. This map $\psi$ is referred to as Szczarba’s twisted shuffle map (see also~Subsection~\ref{subsection Szczarba's twisting cochain}). 

In this paper we establish a marked analogue. After passing $\psi$ to normalised complexes, we obtain the following result.

\begin{maintheorem}[Theorem~\ref{theorem Szczarba's twisted shuffle map}]\label{main theorem}
Suppose $gs_0(x)\in F_1$ is degenerate for any $g\in G_1$ and $x\in F_0$.
The Szczarba's twisted shuffle map on normalised chain complexes restricts to an isomorphism of chain complexes 
\[ \N(X)\otimes_{t_{\sz}}\Omega_\bullet(F,M)\to \Omega_\bullet(X^{\musSharp{}}\Box_\tau (F,M)).
\]
\end{maintheorem}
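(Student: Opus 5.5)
The plan is to reduce everything to the level of the ``allowed'' chains, that is, the chains spanned by simplices all of whose spine edges are marked, and then obtain the statement about $\Omega_\bullet$ formally. Write $\A_n(F,M)\subseteq \N_n(F)$ for the span of nondegenerate $n$-simplices whose consecutive edges $y_{i,i+1}$ lie in $M$. Then $\Omega_n(F,M)=\{a\in\A_n : \partial a\in \A_{n-1}\}$, and similarly for $X^{\musSharp{}}\Box_\tau(F,M)$. The proof has three steps.

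\emph{Step 1 (allowed simplices of the box product).} Let $(x,y)$ be an $n$-simplex of $X\times_\tau F$. Its spine edge from $i$ to $i+1$ is marked exactly when one of two things happens: the $X$-edge is degenerate and the $F$-edge lies in $M$, or the $F$-edge is degenerate. The $F$-edge here is twisted by some $\tau(\cdot)$, and $\tau$ takes values in $G_1$ on $1$-dimensional data. So I will use the hypothesis that $gs_0(x)$ is degenerate for $g\in G_1$, together with $G$-invariance of $M$, to see that twisting neither creates nor destroys the marking. It follows that the nondegenerate allowed simplices are exactly the ``staircase'' simplices $(s_J x, s_I y)$, where $(I,J)$ is a $(p,q)$-shuffle, $x\in X_p$ is nondegenerate and $y\in F_q$ is an allowed simplex. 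In the twisted case the second coordinate is a $\tau$-translate. This gives a bijection between bases of $\bigoplus_{p+q=n}\N_p(X)\otimes \A_q(F,M)$ and $\A_n(X^{\musSharp{}}\Box_\tau(F,M))$, indexed by (basis element, shuffle).

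\emph{Step 2 ($\psi$ restricts to an isomorphism on allowed chains).} I will expand Szczarba's formula for $\psi(x\otimes y)$ on normalised chains. It is a signed sum over shuffles of $(s_Jx,\ \text{twisted }s_Iy)$, plus terms built from Szczarba's operators $D_i$ that involve higher-dimensional values of $\tau$. The aim is to show two things. First, for $y$ allowed, every summand of $\psi(x\otimes y)$ that survives normalisation is allowed. Second, ordering basis elements suitably (by the shuffle, or by the position of the first $F$-step), $\psi$ is unitriangular with respect to the bases of Step~1, with leading term the single ``$X$-first'' staircase simplex. Together these make $\psi\colon \N(X)\otimes\A(F,M)\to\A(X^{\musSharp{}}\Box_\tau(F,M))$ a degreewise isomorphism. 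I expect this step to be the main obstacle: one must check carefully that the non-shuffle terms of Szczarba's formula either degenerate or are triangular, and again the hypothesis on $gs_0$ is what should control the twisted edges.

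\emph{Step 3 (passing to $\Omega$).} Because $\psi$ is an injective chain map, $\partial\psi(z)=\psi(\partial_{t_{\sz}}z)$ gives
\[
\psi(z)\in\Omega \iff z\in \N(X)\otimes \A(F,M)\ \text{and}\ \partial_{t_{\sz}}z\in \N(X)\otimes\A(F,M).
\]
It remains to show that this condition is equivalent to $z\in\N(X)\otimes\Omega_\bullet(F,M)$. The twisted differential has three kinds of components:
\begin{itemize}
\item $\partial x\otimes y$, which lies in $\N\otimes\A$;
\item $\pm x\otimes\partial y$;
\item $x'\otimes t_{\sz}(x'')\cdot y$, the twisting terms.
\end{itemize}
For the twisting terms I need the lemma that the action of $\N(G)$ on $\N(F)$, restricted to positive-dimensional chains, sends $\A$ into $\A$, and likewise sends $\Omega$ into $\Omega$. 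This again comes from $G$-invariance of $M$ and the degeneracy hypothesis, via the Eilenberg--Zilber description of the action.

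Granting that lemma, take $z=\sum_x x\otimes y_x$ with $x$ running over a basis of nondegenerate simplices. Filter by $X$-degree and look at the top $X$-degree component of $\partial_{t_{\sz}}z$ modulo $\N\otimes\A$: it is $\pm\sum_x x\otimes\partial y_x$. Hence each $\partial y_x\in\A$, so each $y_x\in\Omega$. Conversely, if $z\in\N(X)\otimes\Omega_\bullet(F,M)$, the same lemma shows $\partial_{t_{\sz}}z\in\N\otimes\Omega$. The restricted $\psi$ therefore maps $\N(X)\otimes_{t_{\sz}}\Omega_\bullet(F,M)$ bijectively onto $\Omega_\bullet(X^{\musSharp{}}\Box_\tau(F,M))$. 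Since $\psi$ commutes with the differentials, it is an isomorphism of chain complexes.
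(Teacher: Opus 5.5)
\textbf{There is a genuine gap: Step~2 is false, and Step~3 depends on it, so surjectivity is never proved.}

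The allowed chains of a box product are strictly larger than $\N(X)\otimes\A(F,M)$. Your own Step~1 shows this: a pair $(x,y)$ contributes one allowed staircase for \emph{each} shuffle, but only one basis element $x\otimes y$ of the source. Here is a concrete example.
\begin{itemize}
\item Take $G$ trivial, so $\psi$ is just the Eilenberg--Zilber map. Take $X=F=\Delta^1$, $M=F_1$, and let $\iota$ be the nondegenerate $1$-simplex.
\item In degree $2$ the source is $\N_1(\Delta^1)\otimes\A_1(\Delta^1)\cong\kk$.
\item Both nondegenerate triangles $(s_1\iota,s_0\iota)$ and $(s_0\iota,s_1\iota)$ of $\Delta^1\times\Delta^1$ have marked spine edges. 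Hence $\A_2(X^{\musSharp{}}\Box_\tau(F,M))\cong\kk^2$.
\item $\psi$ hits only the single combination $\nabla(\iota\otimes\iota)$.
\end{itemize}
So no ordering makes $\psi$ unitriangular \emph{onto} $\A$; a leading-term argument gives at most injectivity. Step~3 obtains bijectivity onto $\Omega_\bullet$ from bijectivity onto $\A$. At best, then, your argument shows that $\psi$ embeds $\N(X)\otimes_{t_{\sz}}\Omega_\bullet(F,M)$ into $\Omega_\bullet(X^{\musSharp{}}\Box_\tau(F,M))$, which is essentially Lemma~\ref{lemma Szczarba's twisted shuffle map}. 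Surjectivity, the real content of the theorem, is missing.

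Step~1 is also wrong for general $X$. Let $a\in X_2$ be nondegenerate with $\partial_2 a$ degenerate (for instance in $\Delta^2/\Delta^{\{0,1\}}$), and let $e\in M$ be nondegenerate. Then $(a,s_1e)$ is a nondegenerate allowed simplex that is not a staircase.

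Surjectivity is a phenomenon that only appears at the level of $\Omega_\bullet$. In the example, $\Omega_2$ has rank one because the common unmarked $\partial_1$-face (the diagonal) must cancel, and this forces the shuffle coefficients. So you must use the box-product K\"unneth theorem at the level of $\Omega_\bullet$ (Theorem~\ref{theorem box product Kunneth formula}), not a count of allowed simplices. You also need a way to strip off the twisting so that this theorem applies. The paper does this as follows.
\begin{itemize}
\item Filter the source by $X$-degree, giving $A_p$, and the target by $B_p=\Omega_\bullet(\sk_p(X)\Box_\tau F)$.
\item Modulo lower filtration the twisting disappears. The twisting terms lower $X$-degree, and a twisted $\partial_0$ is either untwisted (because $\tau(s_0\cdot)=1$) or falls into $B_{p-1}$. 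So on $A_p/A_{p-1}$ and $B_p/B_{p-1}$ the shuffle map and $AW$ are mutually inverse chain isomorphisms.
\item The induced graded map $\psi'$ is Szczarba's automorphism $g$ composed with $\nabla'$.
\item One checks that $g$ and its inverse $h$ preserve $B_p/B_{p-1}$. This uses the hypothesis on $gs_0$ and a cancellation argument over classes of non-allowed faces.
\item Induction on $p$ with the five lemma then gives the isomorphism.
\end{itemize}
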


This result provides a framework for studying equivariant path homology of directed graphs. 
A directed graph $\mathcal{G}=(V,E)$ gives rise to a marked simplicial set 
\[(\Nrv(\mathcal{G}), E),\]
where $\Nrv(\mathcal{G})$ is the nerve of the preorder $(V,\leq)$ defined by $u\leq v$ whenever there is a directed path from $u$ to $v$, and the marked edges are given by the directed edges. 
The associated path chain complex of $(\Nrv(\mathcal{G}), E)$ agree with the path chain complex $\Omega_\bullet(\mathcal{G})$ introduced by Grigor’yan–Lin–Muranov–Yau.

If a group ~$\Gamma$ acts on $\mathcal{G}$, then the associated marked simplicial set carries an induced action preserving the marked edges. In this setting the Borel construction 
\[
E\underline{\Gamma}\times_{\underline{\Gamma}} \Nrv(\mathcal{G}),
\]
where $\underline{\Gamma}$ denotes the constant simplicial group on $\Gamma$, becomes a marked simplicial set whose marked edges are given by the $\Gamma$-orbits of
\[\big((E\underline{\Gamma})_1\times E^D\big)\cup \big(s_0((E\underline{\Gamma})_0)\times E\big),\] where $E^D=\{(v,v)\mid v\in V\}$ and $s_0$ is the degeneracy map of the simplicial set~$E\underline{\Gamma}$.
This marked Borel construction is denoted by $\mathcal{G}_\Gamma$, and its path homology defines the~$\Gamma$-equivariant path homology of $\mathcal{G}$. 

Theorem~\ref{main theorem} provides an explicit model for the equivariant path homology of a
directed $\Gamma$-graph.

\begin{mainthm}[Proposition~\ref{prop model for equivariant homology digraph}]
Let $\mathcal{G}=(V,E)$ be a 
directed $\Gamma$-graph and  $\Omega_\bullet(\mathcal{G})$ be its GLMY path chain complex. Then there is a quasi-isomorphism
\[
\N(B\Gamma)\otimes_{t_{\sz}}\Omega_\bullet(\mathcal{G})\to\Omega_\bullet(\mathcal{G}_\Gamma),
\]
which is natural with respect to any $\Gamma$-equivariant inclusion $\mathcal{G}'\subset \mathcal{G}$ of directed graphs.
\end{mainthm}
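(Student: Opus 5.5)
The plan is to deduce the statement from Theorem~\ref{main theorem}, applied to $X=B\underline{\Gamma}$, $G=\underline{\Gamma}$, $F=\Nrv(\mathcal{G})$, $M=E$, with the twisting function $\tau\colon (B\underline{\Gamma})_{>0}\to\underline{\Gamma}$ of the universal bundle. First I check the hypotheses. The $\Gamma$-action on $\mathcal{G}$ induces an action of the constant simplicial group $\underline{\Gamma}$ on $\Nrv(\mathcal{G})$. This action preserves $E$, because $\Gamma$ acts by graph automorphisms. In a constant simplicial group every $g\in G_1$ is the degeneracy of an element of $G_0$, so $g s_0(x)=s_0(gx)$ is degenerate. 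Hence Theorem~\ref{main theorem} applies and gives a chain isomorphism
\[
\N(B\Gamma)\otimes_{t_{\sz}}\Omega_\bullet(\Nrv(\mathcal{G}),E)\to \Omega_\bullet\big((B\underline{\Gamma})^{\musSharp{}}\Box_\tau(\Nrv(\mathcal{G}),E)\big).
\]
By the identification stated in the introduction, $\Omega_\bullet(\Nrv(\mathcal{G}),E)$ is the GLMY complex $\Omega_\bullet(\mathcal{G})$.

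The second step identifies the target with $\Omega_\bullet(\mathcal{G}_\Gamma)$. The standard isomorphism $E\underline{\Gamma}\times_{\underline{\Gamma}}F\cong B\underline{\Gamma}\times_\tau F$ sends the class of $(e,y)$ to $([e],\,\gamma(e)^{-1}y)$, or a similar normalisation depending on conventions, where $e$ is written as $\gamma(e)\cdot$(normal form). I will check that this isomorphism matches the markings.
\begin{itemize}
\item[(i)] Orbits of $s_0((E\underline{\Gamma})_0)\times E$ correspond to $s_0(B_0)\times E$. Here $B_0$ is a point, and $\Gamma$-translates of $E$ are again $E$.
\item[(ii)] Orbits of $(E\underline{\Gamma})_1\times E^D$ correspond to $B_1\times s_0(F_0)$, because the twisted coordinate of a degenerate edge $s_0(v)$ is $g s_0(v)=s_0(gv)$, which is still degenerate.
\end{itemize}
This gives an isomorphism of marked simplicial sets, hence of path complexes. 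In this case the map is even an isomorphism, so in particular a quasi-isomorphism.

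For naturality, let $\mathcal{G}'\subset\mathcal{G}$ be a $\Gamma$-equivariant inclusion. It induces an equivariant marked inclusion $\Nrv(\mathcal{G}')\to\Nrv(\mathcal{G})$ carrying $E'$ into $E$. Szczarba's $t_{\sz}$ depends only on $\tau$, and $\psi$ is given by a universal formula in faces, degeneracies and the action. Therefore $\psi$ commutes with the maps induced on both sides, and so does the Borel identification. The only care needed is that $\Omega_\bullet$ is functorial for marking-preserving maps, which holds because allowed simplices map to allowed simplices.

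I expect the main obstacle to be the bookkeeping in step two. One has to fix conventions so that the Borel quotient with the stated orbit-markings literally agrees with the marked twisted Cartesian product of Section~\ref{section marked twisted Cartesian products}. In particular the twisting function must act on the correct vertex, so that degenerate edges are sent to degenerate edges. I would first write out the explicit $\tau$ for $W\underline{\Gamma}\to\overline{W}\underline{\Gamma}$ in dimension $1$, and then verify (i) and (ii) directly.
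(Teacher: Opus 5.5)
Your proposal is correct and follows the paper's route. The paper likewise checks that $\gamma s_0(v)$ is degenerate and applies Theorem~\ref{theorem Szczarba's twisted shuffle map} to $(B\Gamma)^{\musSharp{}}\Box_{\tau_\Gamma}(\Nrv(\mathcal{G}),E)$. It then identifies this with $\mathcal{G}_\Gamma$ via the marked isomorphism $[g,b,x]\mapsto(b,gx)$ of Proposition~\ref{prop Borel construction is a twisted Cartesian product}, checking the two types of marked edges exactly as in your (i) and (ii), and gets naturality from the formula for $\psi$. As you note, the resulting map is actually a chain isomorphism, which is what Proposition~\ref{prop model for equivariant homology digraph} asserts.
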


\section{Preliminaries}
Throughout this paper, $\kk$ will be a principal ideal domain, and all tensor products and chain complexes are taken over~$\kk$, unless specified otherwise.

\subsection{Simplicial sets}

A \emph{simplicial set} $X$ is a graded set indexed on non-negative integers $n$, together with maps
$\partial_i\colon X_n\to X_{n-1}$ and $s_i\colon X_n\to X_{n+1}$
for $0\leq i\leq n$, satisfying the simplicial identities; see~\cite[Definition 1.1]{May}. 
We write $\mathsf{sSet}$ for the category of simplicial sets. 
If $x=s_iy$ for some $y\in X_{n-1}$ and some map $s_i$, then $x$ is called \emph{degenerate}; otherwise $x$ is called \emph{non-degenerate}.

Given $X\in \mathsf{sSet}$, 
one may consider three chain complexes $\mathsf{C}(X)$, $\mathsf{N}(X)$ and $\mathsf{D}(X)$. The $n$-th component of $\mathsf{C}(X)$ is a free $\kk$-module generated by $X_n$ and the differential
\[
\partial \colon \mathsf{C}_n(X)\xrightarrow{} \mathsf{C}_{n-1}(X)
\]
is defined by $\partial(x) =\sum_{i=0}^n(-1)^i\partial_i(x)$ for $x\in X_n$. 
The complex $\mathsf{D}(X)$ is a subcomplex of~$\mathsf{C}(X)$ generated by degenerate simplices. The normalised complex or Moore complex $\mathsf{N}(X)$ has $n$-th component given by 
\[
\mathsf{N}_n(X)=\bigcap_{i\neq 0}\ker \left(\partial_i\colon \C_n(X)\to \C_{n-1}(X)\right)
\]
with differential induced by $\partial_0.$
There is a decomposition of chain complexes
\[
\mathsf{C}(X)= \mathsf{D}(X)\oplus \mathsf{N}(X).
\]
Hence we often identify $\N(X)$ with $\mathsf{C}(X)/\mathsf{D}(X)$, and write $\rho\colon \mathsf{C}(X)\xrightarrow{} \mathsf{N}(X)$ for the projection map.

\subsection{Twisted tensor products}

Let $C$ be a be a coaugmented differential graded coalgebra (dgc), and let $A$ be an augmented differential graded algebra (dga). The complex $\hom(C,A)$ is a dga via
\begin{align*}
d(f)&=d_A\circ f-(-1)^{\deg f}f\circ d_C,&\text{(differential)}\\
f\cup g&=m_A(f\otimes g)\Delta_C,&\text{(product)} 
\end{align*}
where $m_A$ denotes the multiplication on $A$, and $\Delta_C$ is the comultiplication on $C$.

\smallskip

A \emph{twisting cochain} is an element $t\in \hom(C,A)$ of degree $-1$ such that
\[
d(t)=t\cup t,\quad t \,\iota_C=0,\quad \varepsilon_A\, t=0,
\]
where $\iota_C\colon \kk \to C$ is the coaugmentation and $\varepsilon_A\colon A \to \kk$ is the augmentation.

Let $M$ be an augmented differential graded $A$-module, with action map 
\[\mu_M\colon A \otimes M \to M.\]
The \emph{twisted tensor product} $C \otimes_t M$ is the chain complex whose underlying graded $\kk$-module is $C \otimes M$, and whose differential is given by
\begin{equation}\label{eq twisted tensor product differential}
d_t=d_C\otimes 1+1\otimes d_M-(1\otimes \mu_M)(1\otimes t\otimes 1)(\Delta_C\otimes 1).
\end{equation}

\subsection{Twisted Cartesion products}

Let $X$ be a simplicial set and $G$ a simplicial group.
A \emph{twisting function} $\tau\colon X\to G$ is a sequence of maps $\tau_n\colon X_n\to G_{n-1}$ for~$n\geq 1$ such that
\begin{align}
\partial_0\tau(x)&=\tau(\partial_0x)^{-1}\cdot \tau(\partial_1 x); \\
\partial_i\tau(x)&=\tau(\partial_{i+1}x)~\text{ for }i>0;\\\label{eq twisting function 4}
    s_i\tau(x)&=\tau(s_{i+1}x)~\text{ for }i\geq 0;\\\label{eq twisting fuction 5}
    \tau(s_0(x))&=1_n.
\end{align}

Here $\partial_i$ and $s_i$ are the face and degeneracy maps, and $1_n$ is the group identity of~$G_n$ with $x\in X_n$.

Suppose $G$ acts on a simplicial set $F$ from the left. 
Given a twisting funtion $\tau\colon X\to G$, the \emph{twisted Cartesian product}  is the simplicial set $X\times_\tau F$, whose $n$-th component is
\[
X_n\times F_n,
\]
together with the face and degeneracy maps given by
\[\begin{split}
   \partial_0(x,f)&=(\partial_0x,\, \tau(x)\cdot \partial_0f);\\ 
   \partial_i(x,f)&=(\partial_ix,\,  \partial_if) ~ \text{ for } i>0;\\
   s_i(x,f)&=(s_ix,\, s_if) ~ \text{ for } i\geq 0.
\end{split}
\]

\subsection{Szczarba's twisting cochain and twisted shuffle map}\label{subsection Szczarba's twisting cochain}

We follow the exposition in~\cite[Section~5]{Franz24}; see also Szczarba’s original
paper~\cite{Szczarba1961} and the later developments~\cite{franz2021szczarba,HessTonks10}.

For $n\geq 1$, let $S_n$ be the set of $n!$ sequences of integers
\[
S_{n}=\{\bi=(i_1,\ldots, i_n)\in\NN^n\mid 0\leq i_k\leq n-k \text{ for }1\leq k\leq n\}.
\]
In particular $i_n=0$. If $n=0$, put $S_0$ to be the empty sequence $\emptyset$ as unique element. 
The sign of $\bi\in S_n$ is
\[
(-1)^{|\bi|}, \text{ where } |\bi|=i_1+\cdots+i_n.
\]
 
For $n\geq 1$, $\bi\in S_n$, $0\leq k\leq n$ and $m\geq n-k$, there are simplicial operators of degree $k$
\begin{equation}\label{eq simplicial operators D}
D_{\bi,k}\colon X_{m}\to X_{m+k}
\end{equation}
inductively defined by 
\[
D_{\emptyset,0}=\mathrm{id}\quad
\text{and}\quad
D_{\bi,k}=
\begin{cases}
    D'_{\bi',k}s_0\partial_{i_1-k} &\text{ if }k<i_1,\\
    D'_{\bi',k} &\text{ if }k=i_1,\\
    D'_{\bi',k-1}s_0 &\text{ if }k>i_1,\\
\end{cases}
\]
where $\bi'=(i_2,\ldots, i_n)$, and $D'$ is the derived simplicial map of $D$; see~\cite[p. 199]{Szczarba1961} or~\cite[p. 1863]{HessTonks10} for the definition of \emph{derived} simplicial map. 

Let $\tau \colon X\to G$ be a twisting function.
For convenience, we write $\sigma(x)=\tau(x)^{-1}$ for $x\in X_{>0}$.
For $n\geq 1$ and $\bi\in S_{n-1}$, the \emph{Szczarba operators} are the functions
\begin{equation}\label{eq Szczarba operators}
\begin{split}
{\sz}_{\bi}\colon  X_{n}&\to G_{n-1}
\\
x&\mapsto D_{\bi,0}\sigma(x)
\,D_{\bi,1}\sigma(\partial_0 x)\,\cdots\, D_{\bi,n-1}\sigma((\partial_0)^{n-1}x).
\end{split}
\end{equation}
Szczarba's \emph{twisting cochain} $t_{\sz}\colon \mathsf{C}(X)\to \mathsf{C}(G)$ is defined by
\begin{equation}\label{eq Szczarba twisting cochain}
t_{\sz}(x)=\begin{cases}
    0 & \text{ if } n=0,\\
    \sz_{\emptyset}x-1_0=\sigma(x)-1_0 &\text{ if } n=1,\\
    \sum_{\bi\in S_{n-1}}(-1)^{|\bi|}\sz_{\bi}x & \text{ if } n\geq 2,
\end{cases}
\end{equation}
where $\deg x=n$ and $1_0\in G_0$ is the group identity.

If $G$ acts on a simplicial set $F$, then $\mathsf{C}(F)$ turns into a $\mathsf{C}(G)$-module, with action map given by the composite
\[
\mathsf{C}(G)\otimes \C(F)\xrightarrow{\nabla} \C(G\times F)\xrightarrow{\mu_*} \C(F),
\]
where $\nabla$ is the shuffle map, and $\mu_*$ is induced by the $G$-action on $F$.
With the twisting cochain~\eqref{eq Szczarba twisting cochain}, we obtain a twisted tensor product, i.e., a chain complex \[(\C(X)\otimes_{t_{\sz}} \C(F), \partial_{t_{\sz}}).\]


For $n\geq 0$ and $\bi\in S_n$, define
\[\begin{split}
\hatsz_{\bi}\colon  X_{n}&\to (X\times_\tau G)_n=X_n\times G_n
\\
x&\mapsto \left(D_{\bi,0}x,
\,D_{\bi,1}\sigma(x)\,D_{\bi,2}\sigma(\partial_0 x)\,\cdots\, D_{\bi,n}\sigma\big((\partial_0)^{n-1}x\big)\right).
\end{split}
\]
When $\bi=\emptyset$ and $n=0$, we have $\hatsz_\emptyset (x)=(x,1)\in X_0\times G_0$. 
The \emph{Szczarba's twisted shuffle map} is defined as
\begin{equation}\label{eq Szczarba twisted shuffle map}
\begin{split}
\psi\colon \mathsf{C}(X)\otimes_{t_{\sz}}\mathsf{C}(F) & \to \mathsf{C}(X\times_\tau F)\\
x\otimes y&\mapsto \sum_{\bi\in S_n}(-1)^{|\bi|}(\mathrm{id}_X,\mu)_*\nabla\!\left(\hatsz_{\bi}x\otimes y\right),
\end{split}
\end{equation}
where $n$ is the degree of~$x$, and $\nabla\colon \mathsf{C}(X\times_\tau G)\otimes \mathsf{C}(F)\to \mathsf{C}(X\times_\tau G\times F)$ is the shuffle map, and $\mu\colon G\times F\to F$ is the group action. 

\begin{theorem}[{\cite[Theorem 2.4]{Szczarba1961}}]\label{thm Szczarba twisted shuffle}
The map $\psi$ is a quasi-isomorphism.
In particular it induces an isomorphism
\[
H_*(\mathsf{C}(X)\otimes_{t_{\sz}}\mathsf{C}(F))\cong H_*(X\times_\tau F).
\]
\end{theorem}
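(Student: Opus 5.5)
We would prove this by a filtration argument that compares $\psi$ with the ordinary Eilenberg--Zilber map one simplex of $X$ at a time. Throughout we would work with normalised complexes. This is legitimate once we know two things. First, $\psi$ is compatible with degeneracies: it sends $\mathsf{D}(X)\otimes \C(F)+\C(X)\otimes\mathsf{D}(F)$ into $\mathsf{D}(X\times_\tau F)$. This should follow from the identities $s_i\tau=\tau s_{i+1}$, $\tau s_0=1$ and the inductive definition of the operators $D_{\bi,k}$. Second, the projections $\rho$ are quasi-isomorphisms. So it suffices to show that the induced map $\N(X)\otimes_{t_{\sz}}\N(F)\to \N(X\times_\tau F)$ is a quasi-isomorphism.

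\textbf{Step 1 (filtrations).} On the source we filter by base degree: $F_p=\bigoplus_{n\le p}\N_n(X)\otimes \N(F)$. Since $t_{\sz}$ vanishes in degree $0$ and has degree $-1$, the twisting term in \eqref{eq twisted tensor product differential} strictly lowers the base degree. So $F_p$ is a subcomplex and $E^0_{p,*}=\N_p(X)\otimes \N(F)$ with differential $1\otimes \partial$. Hence
\[
E^1_{p,q}=\N_p(X)\otimes H_q(F)
\]
when $H(F)$ is free; in general one uses the K\"unneth sequence, which is harmless because $\N_p(X)$ is free. On the target we filter by the preimage $(X\times_\tau F)^{(p)}$ of the $p$-skeleton of $X$ under the projection, which is a simplicial map. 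We then check from the formula for $\hatsz_{\bi}$ that $\psi(F_p)$ lands in $\N((X\times_\tau F)^{(p)})$. The point is that the first coordinate $D_{\bi,0}x$ is a degeneracy of a face of $x$.

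\textbf{Step 2 (the $E^1$-page of the target).} For each nondegenerate $x\in X_p$, the characteristic map $\Delta^p\to X$ pulls the twisted Cartesian product back to $\Delta^p\times_{x^*\tau}F$. Since $\Delta^p$ has a unique vertex $0$ from which all twisting can be straightened, there is an explicit isomorphism $\Delta^p\times_{x^*\tau} F\cong\Delta^p\times F$ over $\Delta^p$. It is given by multiplying the fibre coordinate by the appropriate product of $\tau$-values along the last-vertex path, and this is essentially the element recorded by $\hatsz$. By excision for the pushout of skeleta this gives
\[
H\big((X\times_\tau F)^{(p)},(X\times_\tau F)^{(p-1)}\big)\cong \bigoplus_{x}H(\Delta^p\times F,\partial\Delta^p\times F)\cong \N_p(X)\otimes H(F).
\]

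\textbf{Step 3 (comparison).} Modulo $F_{p-1}$, only the term $\bi=(0,\dots,0)$ of $\psi$ survives in top base degree. There $D_{\bi,0}x=x$, and the group coordinate is exactly the straightening element of Step 2. So on $E^0$, $\psi$ is the Eilenberg--Zilber shuffle $\N(\Delta^p,\partial\Delta^p)\otimes\N(F)\to\N(\Delta^p\times F,\partial\Delta^p\times F)$ composed with the trivialization, and this is a quasi-isomorphism. Hence $E^1(\psi)$ is an isomorphism. Both filtrations are exhaustive and bounded below, with $F_{-1}=0$. The comparison theorem then gives that $\psi$ is a quasi-isomorphism, and hence the stated isomorphism $H_*(\C(X)\otimes_{t_{\sz}}\C(F))\cong H_*(X\times_\tau F)$.

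The main obstacle is Step 3. We must verify that, in the associated graded, all terms of $\psi$ with $\bi\ne 0$ either drop filtration or vanish in the normalised complex. We must also verify that the surviving term matches the trivialization of Step 2 compatibly with the boundary. We would first try to show that $D_{\bi,0}x$ is degenerate whenever $\bi\neq 0$, by induction on the recursive definition of $D_{\bi,k}$. If that fails, we would fall back on Szczarba's own contracting-homotopy argument.
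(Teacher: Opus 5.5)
Your argument is correct. In outline it is Szczarba's own proof. The paper only cites this theorem, but it reproduces the same structure in the marked setting in Section~\ref{section marked twisted Cartesian products}:
\begin{itemize}
\item filter by base degree and by $\sk_pX$;
\item observe that only the $\bi=(0,\ldots,0)$ term of $\psi$ survives in the associated graded;
\item identify that term with fibrewise multiplication composed with the shuffle map.
\end{itemize}

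Where you differ is in why the fibrewise multiplication is a chain isomorphism on the graded pieces. Szczarba, and the paper after him, write down $g$ and its inverse $h$ on $C_p/C_{p-1}$ explicitly and check, as in Lemma~\ref{lemma differential}, that the graded differential is the untwisted one. You instead realise it as a simplicial trivialisation of $\Delta^p\times_{x^*\tau}F$.

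Your version is more conceptual, and it removes the ``main obstacle'' you flag. By Yoneda, every $g_0\in G_p$ defines a section of $\Delta^p\times_{x^*\tau}G\to\Delta^p$ with value $(\iota_p,g_0)$ on the top simplex. This gives an isomorphism $\Delta^p\times F\cong\Delta^p\times_{x^*\tau}F$ over $\Delta^p$, sending $(s_\mu\iota_p,f)$ to $(s_\mu\iota_p,(s_\mu g_0)f)$. Choosing $g_0=s_0\sigma(x)\,s_0^2\sigma(\partial_0x)\cdots s_0^p\sigma(\partial_0^{p-1}x)$ matches the surviving term of $\psi$ exactly, and compatibility with $\partial\Delta^p$ is automatic because the isomorphism is over $\Delta^p$. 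The explicit formulas for $g$ and $h$ do have a use elsewhere: the paper needs them in Section~\ref{section marked twisted Cartesian products}, where allowedness is checked term by term.

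Your other worry also resolves. If $\bi\neq 0$, then at the first nonzero entry of $\bi$ the recursion produces a (derived) factor $s_0\partial_c$ with $c\geq 1$. So the underlying map $[p]\to[p]$ of $D_{\bi,0}$ misses a value. Hence $D_{\bi,0}x$ is a degeneracy of a proper face of $x$ and lies in $\sk_{p-1}X$, so those terms drop filtration. They need not vanish in $\N$, since the pair with its group coordinate may be nondegenerate. There is therefore nothing to fall back on.

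Two steps in your reduction to normalised chains need more than you indicate.
\begin{itemize}
\item That $t_{\sz}$ and $\psi$ respect degeneracies is not a quick consequence of the twisting-function identities. It is the content of Franz's Corollary~B.3, which the paper cites for exactly this, and you should cite it too.
\item That the projection $\C(X)\otimes_{t_{\sz}}\C(F)\to\N(X)\otimes_{t_{\sz}}\N(F)$ is a quasi-isomorphism is not covered by the ordinary normalisation theorem. Under the base-degree filtration its $E^1$-pages are $\C_p(X)\otimes H_q(F)$ and $\N_p(X)\otimes H_q(F)$, which differ. You have to compare at $E^2$. There $d^1$ involves the degree-one part $\sigma(x)-1$ of $t_{\sz}$, and both sides compute the homology of $X$ with local coefficients in $H_q(F)$. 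Normalisation with local coefficients then gives the isomorphism.
\end{itemize}
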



\begin{bigremark}
It is proved in~\cite[Corollary~B.3]{Franz24} that Szczarba's twisting cochain and the twisted shuffle map~\eqref{eq Szczarba twisted shuffle map} descend to normalised chain complexes
\begin{equation}
t_{\sz}\colon \N(X) \to \N(G)\quad\text{and}\quad
\N(X)\otimes_{t_{\sz}}\N(F)\to \N(C\times_\tau F).
\end{equation}
\end{bigremark}

\subsection{Marked simplicial sets and path (co)homology}

A \emph{marked simplicial set} is a pair $(X,M)$, where~$X$ is a simplicial set and $M$ is a subset of the $1$-simplices that contains all degenerate $1$-simplices. Elements of $M$ are called \emph{marked edges}.
A \emph{morphism} of marked simplicial sets is a simplicial map that sends marked edges to marked edges.
The category of marked simplicial sets will be denoted by~$\mathsf{sSet}^+$.

\begin{example}
There are different ways to regard a simplicial set as a marked simplicial set. For instance, one may take all $1$-simplices that are marked. The resulting marked simplicial set is denoted by $X^{\musSharp{}}=(X,X_1)$, following the notation of~\cite[p. 147]{Lurie}.
This gives rise to a functor $(-)^{\musSharp{}}\colon \mathsf{sSet}\xrightarrow{} \mathsf{sSet}^+$ from the category of simplicial sets to the category of marked simplicial sets.  
\end{example}

Given a marked simplicial set $(X,M)$, an element $x\in X_n$ for $n\geq 0$ is called \emph{allowed} if it satisfies
\[x_{k-1,k}\colonequals \partial_0\ldots \partial_{k-2}{\partial}_{k+1}\ldots \partial_n(x)\in M\quad\text{for all $1\leq k\leq n$}.\]
Denote by $\mathcal{A}_n(X,M)$ the set of all allowed $n$-simplices and define
\begin{equation}\label{eq defn A_n}
\A_n(X,M)=\rho\left(\kk\cdot \mathcal{A}_n(X,M)\right)
\end{equation}
which is a submodule of $\N_n(X)$. 

\begin{definition} 
Denote by $\A_*(X,M)=\bigoplus_{n\geq 0}\A_n(X,M)$.
The path chain complex of~$(X,M)$, denoted by $\Omega_\bullet(X,M)$, is the maximal subcomplex contained in~$\A_*(X,M)$. Its $n$-th component~is given by
\[
\Omega_n(X,M)\colonequals \{x\in \A_n(X,M)\mid \partial x\in \A_{n-1}(X,M)\}.
\]
The homology of $\left(\Omega_\bullet(X,M),\partial\right)$ is called the \emph{path homology} of  $(X,M)$, denoted by~$\PH_*(X,M)$.  We sometimes write $\PH_*(X,M;\kk)$ to emphasize the coefficient ring~$\kk$. 
\end{definition}

\smallskip

Dually, the cochain complex $\mathsf{C}^\bullet(X)$ is the dual of $\mathsf{C}(X)$ with $n$-th component
\[
\mathsf{C}^n(X)=\mathrm{Hom}_{\kk}(\C_n(X),\kk).
\]
Similarly, one has cochain complexes $\mathsf{D}^\bullet(X)$ and $\mathsf{N}^\bullet(X)$, which are the dual complexes of $\mathsf{D}(X)$ and $\N(X)$.
Furthermore, the diagonal map $X\xrightarrow{} X\times X$ induces a cup product on $\mathsf{N}^\bullet(X)$
\[
\cup \colon \mathsf{N}^\bullet(X) \otimes \mathsf{N}^\bullet(X) \xrightarrow{} \mathsf{N}^\bullet(X)
\]
which turns $\mathsf{N}^\bullet(X)$ into a differential graded algebra.

For a marked simplicial set $(X,M)$, let us write~$\A^n(X,M)=\hom_{\kk}(\A_n(X,M),\kk)$ the dual of $\A_n(X,M)$ over $\kk$.
The kernel
\[
\bigoplus_{n\geq 0}\N^n(X) \to  \bigoplus_{n\geq 0}\A^n(X,M)
\]
is a two-sided ideal of $\N^\bullet(X)$ with respect to the cup product $\cup$, which may not necessarily be a dg-ideal.
Let $\mathcal{J}(X,M)$ be the smallest dg-ideal containing this kernel. The $n$-th component of $\mathcal{J}(X,M)$ can be expressed as
\[
\ker (\N^n (X) \to  \A^n(X,M))+\partial^\vee \big( \ker (\N^{n-1} (X) \to  \A^{n-1}(X,M)) \big),
\]
where $\partial^\vee$ is the dual of $\partial$.

\begin{definition}
The path cochain algebra of $(X,M)$ is defined as the quotient algebra 
\[
\Omega^\bullet(X,M)\colonequals \frac{\mathsf{N}^\bullet(X)}{\mathcal{J}(X,M)}.
\]
Its cohomology is referred to as the \emph{path cohomology} of $(X,M)$, denoted by $\PH^*(X,M)$.
\end{definition}

In general $\Omega^\bullet(X,M)$ need not be isomorphic to the dual of $\Omega_\bullet(X,M)$. 
For instance, when $\kk=\mathbb{Z}$, there are examples in which $\Omega^\bullet(X,M)$ contains torsion even though $\Omega_\bullet(X,M)$ is torsion-free; see~\cite[Theorem 5.4]{fu2024path}.
However, an isomorphism does hold if $\kk$ is a field. A proof of this fact appears in~\cite[Proposition 3.16]{ivanov2024simplicial}; for reader's convenience we include a proof below.

\begin{proposition}\label{prop dual}
If $\kk=\FF$ is a field, there is an isomorphism of cochain complexes 
\[
\Omega_\bullet(X,M)^\vee\cong \Omega^\bullet(X,M),
\]
where $\Omega_\bullet(X,M)^\vee=\hom_{\FF}(\Omega_\bullet(X,M),\FF)$ is equipped with the dual differential. 
\end{proposition}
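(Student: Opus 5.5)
The plan is to build the map $\Omega^\bullet(X,M)\to\Omega_\bullet(X,M)^\vee$ directly from the restriction
\[
r\colon \N^\bullet(X)=\N(X)^\vee\longrightarrow \Omega_\bullet(X,M)^\vee .
\]
The argument then splits into three checks. First, $r$ is a surjective cochain map. Second, it vanishes on $\mathcal{J}(X,M)$. Third, its kernel is exactly $\mathcal{J}(X,M)$.

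\textbf{Step 1: $r$ is a surjective cochain map.} Because $\Omega_\bullet(X,M)$ is a subcomplex of $\N(X)$, restriction is compatible with the dual differentials, so $r$ is a cochain map. Over a field $\FF$, the inclusion $\Omega_n(X,M)\subset\N_n(X)$ splits. Hence every linear functional on $\Omega_n(X,M)$ extends to $\N_n(X)$, which gives surjectivity.

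\textbf{Step 2: $\mathcal{J}(X,M)\subset\ker r$.} A generator of $\mathcal{J}$ has the form $f+\partial^\vee g$, where $f$ vanishes on $\A_n(X,M)$ and $g$ vanishes on $\A_{n-1}(X,M)$.
\begin{itemize}
\item Since $\Omega_n(X,M)\subset\A_n(X,M)$, the functional $f$ kills $\Omega_n(X,M)$.
\item For $x\in\Omega_n(X,M)$ we have $\partial x\in\A_{n-1}(X,M)$ by definition of $\Omega_n$. Therefore $(\partial^\vee g)(x)=\pm g(\partial x)=0$.
\end{itemize}
So $r$ descends to a surjective cochain map $\bar r\colon\Omega^\bullet(X,M)\to\Omega_\bullet(X,M)^\vee$.

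\textbf{Step 3: $\ker r\subset\mathcal{J}(X,M)$.} This is the main obstacle, and it is a linear-algebra statement in each degree $n$. Write $\N=\N_n(X)$, $A=\A_n(X,M)$, $B=\A_{n-1}(X,M)$. The boundary $\partial\colon\N\to\N_{n-1}(X)$ induces a map
\[
\bar\partial\colon A\to \N_{n-1}(X)/B,
\]
and by definition $\Omega_n(X,M)=\ker\bar\partial$.

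Now suppose $h\in\N^n(X)$ vanishes on $\Omega_n(X,M)$. Then $h|_A$ vanishes on $\ker\bar\partial$, so over a field $h|_A$ factors through the image $\bar\partial(A)$. Extend the resulting functional on $\bar\partial(A)$ to all of $\N_{n-1}(X)/B$, again using that we are over a field. Pull it back to a functional $g$ on $\N_{n-1}(X)$ that vanishes on $B$. By construction $h|_A=\pm(\partial^\vee g)|_A$.

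Consequently $h\mp\partial^\vee g$ lies in $\ker(\N^n(X)\to\A^n(X,M))$. Since $g\in\ker(\N^{n-1}(X)\to\A^{n-1}(X,M))$, the description of the $n$-th component of $\mathcal{J}(X,M)$ shows $h\in\mathcal{J}(X,M)$. The only bookkeeping left is matching the sign convention of $\partial^\vee$, which does not affect membership in $\mathcal{J}$.

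Combining the three steps, $\bar r$ is a bijective cochain map, hence an isomorphism $\Omega^\bullet(X,M)\cong\Omega_\bullet(X,M)^\vee$. The field hypothesis is used exactly twice, for the two extension steps; this is where torsion phenomena over $\Z$ break the argument.
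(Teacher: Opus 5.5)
Your proof is correct and is essentially the paper's argument. The paper dualizes the exact sequence $0\to\Omega_n(X,M)\to\N_n(X)\xrightarrow{f}\N_n(X)/\A_n(X,M)\oplus\N_{n-1}(X)/\A_{n-1}(X,M)$, with $f(a)=(a+\A_n,\ \partial a+\A_{n-1})$, and identifies $\operatorname{im} f^\vee$ with $\mathcal{J}(X,M)$, so that the restriction map (your $r$) is surjective with kernel exactly $\mathcal{J}(X,M)$. Your Steps 1--3 check this by hand: Step 3 is the exactness of the dual sequence at $\N^n(X)$, slightly streamlined because you restrict to $\A_n$ first, so only the $\partial$-component needs extending.
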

\begin{proof}
For any $n\geq 0$, the definition of~$\Omega_n(X,M)$ gives an exact sequence
\begin{equation}\label{eq exact sequence}
0\to \Omega_n(X,M)\xrightarrow{g} \N_n(X)\xrightarrow{f} \frac{\N_n(X)}{A_n(X,M)}\oplus \frac{\N_{n-1}(X)}{A_{n-1}(X,M)}
\end{equation}
where $f(a)=\big(a+A_n(X,M), \partial(a)+A_{n-1}(X,M)\big)$ and $g$ is the inclusion. 

Let $K_n=\ker \big(\N^n(X) \to A^n(X,M)\big)$. Since $\FF$ is a field, there is an isomorphism 
\begin{equation}\label{eq def Kn}
K_n\cong \left(\frac{\N_n(X)}{A_n(X,M)}\right)^\vee.
\end{equation}
Dualising the exact sequence~\eqref{eq exact sequence} over $\FF$, we have an exact sequence
\[
\left(\frac{\N_n(X)}{A_n(X,M)}\right)^\vee\oplus \left(\frac{\N_{n-1}(X)}{A_{n-1}(X,M)}\right)^\vee\xrightarrow{f^\vee} \N^n(X)\xrightarrow{g^\vee} \Omega_n(X,M)^\vee\to 0.
\]
By definition of $\mathcal{J}(X,M)$ and the identification~\eqref{eq def Kn}, the image of $f^\vee$ is exactly equal to~$\mathcal{J}(X,M)$. 
The exactness of the above sequence means $\ker g^\vee=\mathcal{J}(X,M)$, so $g^\vee$ induces an isomorphism
\[
\Omega^\bullet(X,M)\to \Omega_\bullet(X,M)^\vee.
\]
Moreover $\ker g^\vee=\mathcal{J}(X,M)$ is a dg-ideal and $g^\vee$ is a cochain map. Hence the above isomorphism is an isomorphism of cochain complexes. 
\end{proof}

\subsection{Products of marked simplicial sets}

Let $(X,M_X)$ and $(Y,M_Y)$ be two marked simplicial sets.
The \emph{strong product} 
\[(X,M_X)\times (Y,M_Y)=(X\times Y, M_X\times M_Y)\]
is the marked simplicial set whose underlying simplicial set is the product $X\times Y$, and whose marked edges are pairs of marked edges from $X$ and $Y$.  
The \emph{box product} 
\[(X,M_X)\Box (Y,M_Y)=(X\times Y, M_X\Box M_Y)\]
has the same underlying simplicial set $X\times Y$, but its set of marked edges is
defined by
\begin{multline*}
M_X\Box M_Y
=
M_X\times s_0(Y_0)\;\cup\; s_0(X_0)\times M_Y \\
=
\{(m,s_0(y))\mid m\in M_X,\ y\in Y_0\}
\;\cup\;
\{(s_0(x),m')\mid x\in X_0,\ m'\in M_Y\}.
\end{multline*}
Equivalently, an edge in $X\times Y$ is marked in the box product if and only if
one of its components is a degenerate $1$-simplex and the other component is
marked.

\begin{theorem}[{\cite[Theorem~7.6]{GLMY}, \cite[Theorem~8.6]{ivanov2024simplicial}}]\label{theorem box product Kunneth formula}
Let $\kk$ be a principal ideal domain.
The Eilenberg--Zilber map and the Alexander--Whitney map
\[
\begin{aligned}
EZ\colon&\ \N(X)\otimes \N(Y)\to \N(X\times Y),\\
AW\colon&\ \N(X\times Y)\to \N(X)\otimes \N(Y),
\end{aligned}
\]
induce mutually inverse isomorphisms of chain complexes
\[
\Omega_\bullet(X,M_X)\otimes \Omega_\bullet(Y,M_Y)\cong \Omega_\bullet\bigl((X,M_X)\Box (Y,M_Y)\bigr).
\]
    
\end{theorem}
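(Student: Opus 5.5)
The statement immediately above is Theorem~\ref{theorem box product Kunneth formula}, the K\"unneth isomorphism for the box product. The plan is to show that $EZ$ and $AW$ each preserve the path subcomplexes. Since $AW\circ EZ=\id$ on normalised chains, it will then remain only to check that $EZ\circ AW$ is the identity on $\Omega_\bullet$ of the box product, or equivalently that $EZ$ maps onto it.

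\textbf{Step 1: $EZ$ preserves path chains.} Let $x\in X_p$ and $y\in Y_q$ be allowed. Each shuffle term of $EZ(x\otimes y)$ is a simplex $(s_{\mu}x,s_{\nu}y)$. Along its spine, each consecutive edge moves in exactly one coordinate: one coordinate is an edge of the spine of $x$ (or of $y$), and the other is degenerate. Such an edge is therefore marked in $M_X\Box M_Y$, so $EZ(\A\otimes\A)\subset \A$. Since $EZ$ is a chain map and the tensor product $\Omega_\bullet(X,M_X)\otimes\Omega_\bullet(Y,M_Y)$ is a subcomplex of $\A\otimes\A$ with boundary in $\A\otimes\A$, the image of $\Omega\otimes\Omega$ lies in $\A$ and has boundary in $\A$. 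Hence it lies in $\Omega$ of the box product.

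\textbf{Step 2: $AW$ on allowed simplices.} For an allowed simplex $(x,y)$ of $X\times Y$ of degree $n$, we have
\[
AW(x,y)=\sum_{p+q=n} x_{0\ldots p}\otimes y_{p\ldots n}.
\]
Each spine edge of $(x,y)$ is marked, so for each $k$ either $x_{k-1,k}$ is degenerate and $y_{k-1,k}\in M_Y$, or the reverse. A front face $x_{0\ldots p}$ is nonzero in $\N(X)$ only if it is non-degenerate, and it is allowed only if its spine edges lie in $M_X$. The key point is that only the shuffle-type decomposition survives in normalised chains. After modding out degeneracies, $(x,y)$ is $EZ$-like: the nondegenerate part of $AW(x,y)$ equals $\pm x'\otimes y'$, where $x'$ and $y'$ are obtained by deleting the degenerate spine directions, and $(x,y)$ itself is a degeneracy-shuffle of $x'$ and $y'$. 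For a general path chain $c=\sum \lambda_j (x_j,y_j)$ one gets $AW(c)\in\A\otimes\A$. This step needs care over a PID: one has to show $\A_n$ of the product decomposes as $\bigoplus_{p+q=n}\A_p\otimes\A_q$ via $EZ$. Since the underlying modules are free on nondegenerate allowed simplices, this reduces to a bijection between nondegenerate allowed simplices of the box product and triples consisting of an allowed $x'$, an allowed $y'$, and a $(p,q)$-shuffle.

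\textbf{Step 3: matching the maximal subcomplexes.} Step~2 gives an isomorphism $EZ\colon \bigoplus_{p+q=n}\A_p(X)\otimes \A_q(Y)\to \A_n(X\times Y)$. Note that $\Omega$ is determined by the condition $\partial c\in\A$. The remaining task is to show that $\partial c\in\A$ for $c=EZ(\sum a_p\otimes b_q)$ forces each $\partial a_p\in\A$ and $\partial b_q\in\A$, so that the preimage of $\Omega$ is exactly $\Omega\otimes\Omega$. I would apply $AW$, which is also a chain map, and use that $\A\otimes\A$ inside $\N\otimes\N$ has a componentwise direct-sum complement, namely the non-allowed generators in either factor. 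Then
\[
d(\textstyle\sum a\otimes b)=\sum \partial a\otimes b\pm a\otimes\partial b\in \A\otimes \A.
\]
Over a PID with free modules this yields the required conditions on each factor. Here one takes bases of the $b$'s and uses that the $\A_q$ are free summands spanned by generators; the subtle point is exactly the K\"unneth-type argument that $\Omega\otimes\Omega$ is the maximal subcomplex of $\A\otimes\A$.

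I expect Step~3, and especially this last algebraic fact over a PID, to be the main obstacle. I would handle it by writing $\N=\A\oplus \A^c$ as graded modules, expressing the condition on boundaries coordinatewise, and using torsion-freeness of the quotients $\N/\A$, which are free on generators.
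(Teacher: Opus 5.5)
The paper does not prove this statement; it quotes it from GLMY and Ivanov--Pavutnitskiy, so I compare your plan with the standard argument. There is a genuine gap, and it is in Step~2, not where you expect it.

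\textbf{What works.} Your Step~1 is correct. The PID algebra you worry about in Step~3 is routine: $\A_p/\Omega_p$ embeds via $\partial$ into the free module $\N_{p-1}/\A_{p-1}$, so $\Omega_p$ is a direct summand of $\A_p$. A coordinatewise argument then gives $\{w\in\A\otimes\A : \partial w\in\A\otimes\A\}=\Omega_\bullet(X,M_X)\otimes\Omega_\bullet(Y,M_Y)$. Both projections of a box-allowed simplex are allowed, since a degenerate edge lies in $M$. Hence $AW$ already maps $\Omega_\bullet$ of the box product into $\Omega\otimes\Omega$.

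\textbf{The gap in Step~2.} It is false that $EZ$ maps $\bigoplus_{p+q=n}\A_p\otimes\A_q$ onto $\A_n$ of the box product. It is also false that $AW$ of an allowed simplex is $\pm x'\otimes y'$. Take digraph edges $x=(u_0\to u_1)$ and $y=(v_0\to v_1)$. The two staircases $(s_1x,s_0y)$ and $(s_0x,s_1y)$ are independent generators of $\A_2$, while $EZ(x\otimes y)$ is only their signed difference. Moreover $AW$ sends the first staircase to $x\otimes y$ and the second to $0$. When your triple bijection holds, $\A_n\cong\bigoplus_{p+q=n}\A_p\otimes\A_q\otimes\kk^{\Sh(p,q)}$. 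The whole content of the theorem is that a path chain cannot carry independent coefficients on different shuffles of the same pair $(x',y')$, i.e.\ that $AW$ is injective on $\Omega_\bullet$ of the box product. Step~3 assumes this from the outset by writing $c=EZ(\sum a_p\otimes b_q)$.

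\textbf{The missing idea.} You must use $\partial c\in\A$ on \emph{diagonal} faces. Suppose steps $k$ and $k+1$ of a shuffle simplex $(s_\nu x',s_\mu y')$ form a corner, i.e.\ one $X$-step and one $Y$-step. Then its $\partial_k$-face has a $k$-th spine edge with both coordinates non-degenerate, which is not in $M_X\Box M_Y$. This face arises from exactly two generators, the two orders of that corner. Its cancellation in $\partial c$ forces their coefficients to differ exactly by the relative shuffle sign. Since corner flips connect all $(p,q)$-shuffles, this gives $c=EZ\bigl(\sum\lambda_{x',y'}\,x'\otimes y'\bigr)$, and only at that point does your Step~3 apply.

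\textbf{A hypothesis is needed.} Both the corner argument and your triple bijection require that a simplex whose spine edge $x_{k-1,k}$ is degenerate lies in $s_{k-1}(X)$. This holds for nerves of categories and preorders, which is the GLMY setting and covers the paper's applications. Without it both fail. Take $X=\Delta^2/\Delta^{\{0,1\}}$ and $Y=\Delta^1$ with all edges marked, and let $x'$ be the image of $\iota_2$, so $\partial_2x'$ is degenerate. Then $(x',s_1\iota_1)$ is a non-degenerate box-allowed $2$-simplex that is not a shuffle. A direct count shows that $\Omega_2$ of the box product has rank $5$, while $(\N(X)\otimes\N(Y))_2$ has rank $4$. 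So the isomorphism cannot hold verbatim for arbitrary marked simplicial sets.
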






\section{Marked twisted Cartesian products}

Let $\simgp$ be a simplicial group. We study marked simplicial sets equipped with a~$\simgp$-action. 
\begin{definition}\label{def simplicial action on a marked simplicial complex}
A simplicial group $\simgp$ is said to act from the left on a marked simplicial set $(F,M)$ if
\begin{itemize}
    \item $\simgp$ acts on the simplicial set $F$, that is, there is a simplicial map
    \[
    \Phi\colon \simgp\times F\xrightarrow{} F
    \]
    satisfying the usual associativity and identity conditions;
    \item the action preserves marked edges; in other words, $M$ is $\simgp_1$-invariant: for every $m\in M\subset F_1$ and $g\in\simgp_1$, one has $\Phi(g,m)\in M$.
\end{itemize} 
We will write $gx$ for $\Phi(g,x)$. 
\end{definition}

When $(F,M)$ is a marked simplicial set equipped with a left $G$-action, 
we may consider the twisted Cartesian product $X\times_\tau F$, equipped with a specific marking as the box product $X^{\musSharp{}}\Box (F,M)$. We refer to it as \emph{marked twisted Cartesian product}.

\begin{definition}\label{def marked twisted Cartesian product}
Let $G$ act on a marked simplicial set $(F,M)$ from the left,
and let $\tau\colon X_{>0}\to G$ be a twisting function.
 The \emph{marked twisted Cartesian product} \[ X^{\musSharp{}}\Box_\tau (F,M)\] is defined as the twisted Cartesion product
\[X\times_\tau F\]
with the marked edges given by
\[
s_0(X_0)\times M\cup X_1\times s_0(F_0).
\] 
\end{definition}
We will abbreviate the notation $X^{\musSharp{}}\Box_\tau (F,M)$ as $X\Box_\tau F$ when there is no confusion with the unmarked case $X\times_\tau F$.

\begin{bigremark}\label{remark marked twisted Cartesian product and box product}
Similar to the distinction between $X\times_\tau F$ and $X\times F$,
the marked twisted Cartesian product $X\Box_\tau F$ differs from the box product $X^{\musSharp{}}\Box (F,M)$ only in the zero-th face map.
\end{bigremark}

\begin{lemma}\label{lemma actions}
Suppose $G$ acts on the marked simplicial set $(F,M)$ from the left.
\begin{enumerate}
\item 
The action map
$\Phi\colon G\times F\to F$
induces a morphism of marked simplicial sets
\[
\widetilde{\Phi}\colon G^{\musSharp{}}\,\Box\, (F,M) \to (F,M).
\]
\item The induced chain-level action map
\[
\N(G)\otimes \N(F)\xrightarrow{\nabla} \N(G\times F)\xrightarrow{\Phi_*} \N(F)
\]
restricts to a map on $\N(G)\otimes\Omega_\bullet(F,M)$, yielding an action map
\[
\mu_{(F,M)}\colon \N(G)\otimes \Omega_\bullet(F,M)\to \Omega_\bullet(F,M).
\]
In particular,
the chain complex $\Omega_\bullet(F,M)$ becomes a left $\N(G)$-module. 
\end{enumerate}
\end{lemma}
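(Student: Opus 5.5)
For part (1), I will check directly that $\Phi$ sends marked edges of the box product $G^{\musSharp{}}\Box(F,M)$ to $M$. By definition these marked edges are of two kinds. The first kind is $(s_0(g),m)$ with $g\in G_0$ and $m\in M$. Here $\Phi(s_0 g,m)=(s_0g)m$ with $s_0g\in G_1$, so it lies in $M$ by the $G_1$-invariance in Definition~\ref{def simplicial action on a marked simplicial complex}. The second kind is $(h,s_0(x))$ with $h\in G_1$ and $x\in F_0$. Then $hs_0(x)=\Phi(h,s_0x)$ is again $G_1$-invariance applied to the degenerate edge $s_0(x)$. Since $M$ contains all degenerate $1$-simplices, $s_0(x)\in M$, and so $hs_0(x)\in M$. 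Thus no extra hypothesis is needed, and $\widetilde{\Phi}$ is a morphism of marked simplicial sets.

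For part (2), I will factor the composite through the box product. By Theorem~\ref{theorem box product Kunneth formula}, applied to $(G,G_1)=G^{\musSharp{}}$ and $(F,M)$, the Eilenberg--Zilber map $\nabla$ sends $\Omega_\bullet(G^{\musSharp{}})\otimes\Omega_\bullet(F,M)$ isomorphically onto $\Omega_\bullet(G^{\musSharp{}}\Box(F,M))$.

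Next I observe that $\Omega_\bullet(G^{\musSharp{}})=\N(G)$. Every simplex is allowed when all edges are marked, so $\A_n(G,G_1)=\rho(\kk G_n)=\N_n(G)$. The boundary condition is then automatic, so the maximal subcomplex is all of $\N(G)$.

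It remains to show that $\Phi_*$ maps $\Omega_\bullet(G^{\musSharp{}}\Box(F,M))$ into $\Omega_\bullet(F,M)$. I will use the general fact that a morphism of marked simplicial sets $f\colon(X,M_X)\to(Y,M_Y)$ induces a chain map $\Omega_\bullet(X,M_X)\to\Omega_\bullet(Y,M_Y)$. The argument runs as follows.
\begin{itemize}
\item The map $f_*$ commutes with the face operators, so it preserves each edge $x_{k-1,k}$. Hence it sends allowed simplices to allowed simplices.
\item It commutes with the projection $\rho$, because it sends degenerate simplices to degenerate ones. Therefore $f_*(\A_n(X,M_X))\subset\A_n(Y,M_Y)$.
\item Since $f_*$ is a chain map, if $\partial a\in\A_{n-1}$ then $\partial f_*a=f_*\partial a\in\A_{n-1}$. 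This gives $f_*(\Omega_n)\subset\Omega_n$.
\end{itemize}
Applying this fact to $\widetilde{\Phi}$ from part (1) gives the restriction $\mu_{(F,M)}$.

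The module axioms (associativity and unit) will be inherited from the corresponding identities for $\N(F)$ as an $\N(G)$-module. Those identities hold because $\nabla$ is associative and natural and $\Phi$ is a group action. Since the restriction is to a subcomplex, the identities persist there.

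The only delicate step is the identification of $\A_n$ when the $\rho$-projection is involved: one must make sure that $f_*\rho=\rho f_*$ on $\C$. I expect this to be routine, and the rest is bookkeeping.
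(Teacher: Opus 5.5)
Your proposal is correct and follows the paper's own route: part (1) checks the two types of marked edges of the box product against the $G_1$-invariance of $M$, and part (2) factors the action through the K\"unneth isomorphism of Theorem~\ref{theorem box product Kunneth formula}, using $\Omega_\bullet(G^{\musSharp{}})=\N(G)$, and then composes with the map induced by $\widetilde{\Phi}$. You also spell out details that the paper leaves implicit, namely that $\Omega_\bullet$ is functorial for morphisms of marked simplicial sets and that the module axioms hold.
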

\begin{proof}
Part (1) is a direct consequence of Definition~\ref{def simplicial action on a marked simplicial complex}.
For part~(2), Theorem~\ref{theorem box product Kunneth formula} shows that the shuffle map
\[
\nabla\colon \N(G)\otimes \N(F)\to \N(G\times F)
\]
restricts to an isomorphism
\[
\N(G)\otimes \Omega_\bullet(F,M)=\Omega_\bullet(G^{\musSharp{}})\otimes \Omega_\bullet(F,M) \to \Omega_\bullet(G^{\musSharp{}}\Box (F,M)).
\] 
The desired action map $\mu_{(F,M)}$ is then obtained by composing this isomorphism with the induced map
$\widetilde{\Phi}_*\colon \Omega_\bullet(G^{\musSharp{}}\Box (F,M))\to \Omega_\bullet(F,M)$.
\end{proof}


\begin{lemma}\label{lemma naturality wrt group action}
Suppose that for any $g\in G_1$ and $m\in F_0$, the simplex $gs_0(m)\in F_1$ is degenerate.
Then the simplicial map
\[
(\id_X,\Phi)\colon X\times_\tau G \times F\to X\times_\tau F
\]
gives a morphism of marked simplicial sets
\[
\widetilde{(\id_X,\Phi)}\colon 
(X\times_\tau G)^{\musSharp{}}\Box (F,M)\to 
X\Box_\tau F.
\]
\end{lemma}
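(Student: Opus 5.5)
The plan is to check two things directly from the definitions. First, $(\id_X,\Phi)$ is a simplicial map $X\times_\tau G\times F\to X\times_\tau F$. Second, it sends every marked edge of $(X\times_\tau G)^{\musSharp{}}\Box (F,M)$ to a marked edge of $X\Box_\tau F$, namely one lying in $s_0(X_0)\times M\cup X_1\times s_0(F_0)$.

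For simpliciality, the only nontrivial face is $\partial_0$. In $X\times_\tau G$ the action of $G$ on itself is left multiplication, so
\[\partial_0\big((x,g),f\big)=\big((\partial_0x,\tau(x)\partial_0 g),\partial_0 f\big).\]
Applying $(\id_X,\Phi)$ gives $(\partial_0x,\tau(x)\partial_0(g)\,\partial_0 f)$. On the other side,
\[\partial_0(x,gf)=(\partial_0 x,\tau(x)\partial_0(gf))=(\partial_0x,\tau(x)\partial_0(g)\partial_0(f)),\]
because $\Phi$ is simplicial and the action is associative. The faces $\partial_i$ with $i>0$ and all degeneracies commute with $(\id_X,\Phi)$, since they act componentwise and $\Phi$ is simplicial.

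For the markings, the marked edges of the box product $(X\times_\tau G)^{\musSharp{}}\Box(F,M)$ come in two types.

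\emph{Type (a):} $\big((x,g),s_0(m)\big)$ with $(x,g)\in X_1\times G_1$ arbitrary and $m\in F_0$. Its image is $(x,g\,s_0(m))$. By hypothesis $g\,s_0(m)$ is a degenerate $1$-simplex of $F$, hence equals $s_0(m')$ with $m'=\partial_0(g s_0 m)$. So the image lies in $X_1\times s_0(F_0)$, which is marked.

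\emph{Type (b):} $\big(s_0(x,h),e\big)$ with $(x,h)\in X_0\times G_0$ and $e\in M$. Here $s_0(x,h)=(s_0x,s_0h)$, so the image is $(s_0x,\,s_0(h)\,e)$. Since $s_0(h)\in G_1$ and $M$ is $G_1$-invariant by Definition~\ref{def simplicial action on a marked simplicial complex}, we get $s_0(h)e\in M$. So the image lies in $s_0(X_0)\times M$.

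The only step needing care is type (a), since the $X$-component of a type (a) edge need not be degenerate. This is exactly where the hypothesis on $g s_0(m)$ is used, and it is the main point of the argument.
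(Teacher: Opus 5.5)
Your proof is correct and matches the paper's: both split the marked edges of $(X\times_\tau G)^{\musSharp{}}\Box (F,M)$ into the types $\big((x,g),s_0(m)\big)$ and $\big(s_0(x,h),e\big)$. The first type is handled by the hypothesis that $g\,s_0(m)$ is degenerate, and the second by $G_1$-invariance of $M$. Your check that $(\id_X,\Phi)$ commutes with $\partial_0$ is also correct; the paper simply takes simpliciality for granted.
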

\begin{proof} 
The marked edges of the box product $(X\times_\tau G)^{\musSharp{}}\Box (F,M)$ are of two types:
\[\begin{split}
(s_0x,s_0g, m) &\, \text{ for $(x,g)\in X_{0}\times G_{0}$ and $m\in M$},\\
(x,g,s_0(m)) &\, \text{ for $(x,g)\in X_1\times G_{1}$ and $m\in F_0$}.
\end{split}
\]
In the first case, we compute
$(\id_X,\Phi)(s_0x,s_0g, m)=(s_0x,(s_0g) m)\in s_0(X_0)\times M$,
since $m\in M$ and the action preserves markings.
In the second case,
we have
$(\id_X,\Phi)(x,g, s_0(m))=(x,g s_0(m))\in X_1\times s_0(F_0)$,
which is marked, because $g s_0(m)$ is degenerate by assumption. 

In both cases,  the image of a marked edge under $(\id_X,\Phi)$ is again a marked edge. 
Hence $(\id_X,\Phi)$ induces a morphism of marked simplicial sets, as claimed.
\end{proof}

\section{Szczarba's twisted shuffle map for marked twisted Cartesian products}
\label{section marked twisted Cartesian products}

Consider Szczarba's twisting cochain on normalised chain complexes
\[
t_{\sz}\colon \N(X)\to \N(G).
\]
Together with the action map $\mu_{(F,M)}$ from Lemma~\ref{lemma actions}, this yields a chain complex
\[
\big(\N(X)\otimes_{t_{\sz}} \Omega_\bullet(F,M), \partial_{t_{\sz}}\big),
\]
where $\partial_{t_{\sz}}=\partial_X\otimes 1+1\otimes \partial_F- (1\otimes \mu_{(F,M)}) (1\otimes t_{\sz}\otimes 1)(\Delta_X\otimes 1)$.

Our goal of this section is to prove that Szczarba's twisted shuffle map $\psi$ on normalised chain complexes restricts to a chain map
\begin{equation}\label{eq unclear chain map}
\big(\N(X)\otimes_{t_{\sz}}\Omega_\bullet(F,M),\partial_{t_{\sz}}\big)\to (\Omega_\bullet(X\times_\tau F),\partial)
\end{equation} 
which induces an isomorphism on homology.

\begin{lemma}\label{lemma Szczarba's twisted shuffle map}
Suppose that for any $g\in G_1$ and $m\in F_0$, the simplex $gs_0(m)\in F_1$ is degenerate.
Then Szczarba's twisted shuffle map $\psi$ on normalised chain complexes can be restricted to a map of chain complexes
\[
\psi_\omega\colonequals \left.\psi\right|_{\N(X)\otimes_{t_{\sz}} \Omega_\bullet(F,M)} \colon \big(\N(X)\otimes_{t_{\sz}} \Omega_\bullet(F,M),\partial_{t_{\sz}}\big)\to (\Omega_\bullet(X\Box_\tau F),\partial).
\]
\end{lemma}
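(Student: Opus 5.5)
Since $\psi$ is already a chain map on normalised complexes (by the Remark following Theorem~\ref{thm Szczarba twisted shuffle}, citing \cite[Corollary~B.3]{Franz24}), and $\N(X)\otimes_{t_{\sz}}\Omega_\bullet(F,M)$ is a subcomplex of $\N(X)\otimes_{t_{\sz}}\N(F)$ (closed under $\partial_{t_{\sz}}$ because $\mu_{(F,M)}$ lands in $\Omega_\bullet(F,M)$ by Lemma~\ref{lemma actions}), it suffices to prove that $\psi$ sends $\N(X)\otimes\Omega_\bullet(F,M)$ into $\A_*(X\Box_\tau F)$. Indeed, the image of a chain complex under a chain map is a subcomplex of $\N(X\times_\tau F)$. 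If that image lies in $\A_*$, it lies in the maximal subcomplex $\Omega_\bullet(X\Box_\tau F)$. So the whole statement reduces to a statement about allowed simplices.

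To show the image lies in $\A_*$, I would factor $\psi$ as in~\eqref{eq Szczarba twisted shuffle map}. Write $x\otimes y\mapsto \sum_{\bi}(-1)^{|\bi|}\hatsz_{\bi}x\otimes y$, then apply the shuffle map $\nabla\colon \N(X\times_\tau G)\otimes\N(F)\to\N(X\times_\tau G\times F)$, then $(\id_X,\Phi)_*$. For the first step, view $\hatsz_{\bi}x$ as a simplex of $(X\times_\tau G)^{\musSharp{}}$. Every simplex there is allowed, so $\hatsz_{\bi}x$ lies in $\N(X\times_\tau G)=\Omega_\bullet((X\times_\tau G)^{\musSharp{}})$ after normalisation. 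One must check that $\hatsz_{\bi}$ descends to normalised chains; this is part of Franz's normalisation result, and otherwise terms vanish under $\rho$.

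For the shuffle step, Theorem~\ref{theorem box product Kunneth formula} gives
\[
\nabla\bigl(\N(X\times_\tau G)\otimes\Omega_\bullet(F,M)\bigr)\subseteq\Omega_\bullet\bigl((X\times_\tau G)^{\musSharp{}}\Box(F,M)\bigr).
\]
Here the box product uses the untwisted simplicial structure on the product of $X\times_\tau G$ with $F$. This is legitimate, since $X\times_\tau G$ is just some simplicial set. For the last step, Lemma~\ref{lemma naturality wrt group action} says that $(\id_X,\Phi)$ is a morphism of marked simplicial sets into $X\Box_\tau F$. A marked morphism sends allowed simplices to allowed simplices and commutes with $\rho$ and $\partial$, so it maps $\Omega_\bullet$ into $\Omega_\bullet$. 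Composing gives $\psi(\N(X)\otimes\Omega_\bullet(F,M))\subseteq\Omega_\bullet(X\Box_\tau F)$. The factor $(\id_X,\Phi)$ is a simplicial map from $X\times_\tau G\times F$ (with $G$ acting on the $F$ factor) to $X\times_\tau F$; one checks on $\partial_0$ that $(x,g,f)\mapsto(x,gf)$ is compatible with the twisting.

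The main obstacle I expect is this compatibility of simplicial structures. The domain of the shuffle map must be the honest product $(X\times_\tau G)\times F$, and we need $(\id_X,\Phi)$ to be simplicial from it to $X\times_\tau F$. This holds because $\partial_0(x,g,f)=(\partial_0x,\tau(x)\partial_0g,\partial_0f)$ maps to $(\partial_0x,\tau(x)(\partial_0g)(\partial_0 f))$, which uses associativity of the action. A secondary subtlety is that $\Omega_\bullet$ of the intermediate objects is defined via the normalised complexes, so I would verify that each map is compatible with the projections $\rho$. This holds for the simplicial maps and for $\nabla$ by standard facts, and for $\hatsz_{\bi}$ by \cite{Franz24}.
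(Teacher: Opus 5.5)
Your proposal is correct and follows essentially the same route as the paper: factor $\psi$ through $\hatsz_{\bi}$, identify $\N(X\times_\tau G)=\Omega_\bullet((X\times_\tau G)^{\musSharp{}})$, apply Theorem~\ref{theorem box product Kunneth formula} to $(X\times_\tau G)^{\musSharp{}}\Box(F,M)$, push forward along the marked morphism of Lemma~\ref{lemma naturality wrt group action}, and then use that $\psi$ is a chain map. Your preliminary reduction to landing in $\A_*$ is harmless but unnecessary, since the factorization already lands in $\Omega_\bullet$; your explicit checks that the domain is a subcomplex and that $(\id_X,\Phi)$ is simplicial fill in points the paper leaves implicit.
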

\begin{proof}
By~Theorem~\ref{theorem box product Kunneth formula}, the (untwisted) shuffle map on normalised chain complexes
\[
 \mathsf{N}(X\times_\tau G)\otimes \mathsf{N}(F)\to \mathsf{N}(X\times_\tau G\times F)
\]
restricts to an isomorphism
\begin{multline*}
\nabla \colon \mathsf{N}(X\times_\tau G)\otimes \Omega_\bullet(F,M)=\Omega_\bullet((X\times_\tau G)^{\musSharp{}})\otimes \Omega_\bullet(F,M)
 \to
\Omega_\bullet\!\left((X\times_\tau G)^{\musSharp{}}\Box (F,M)\right). 
\end{multline*}

Now take $x\otimes y\in \N_n(X)\otimes_{t_{\sz}} \Omega_\bullet(F,M)$. For each $\bi\in S_n$, as
$\hatsz_{\bi}x\in \N_n(X\times_\tau G)$,
we apply $\nabla$ to $\hatsz_{\bi}x\otimes y$ to obtain
\[\nabla\!\left(\hatsz_{\bi}x\otimes y\right)\in \Omega_\bullet\!\left((X\times_\tau G)^{\musSharp{}}\Box (F,M)\right).\]
By the naturality of~$\widetilde{(\mathrm{id}_X,\Phi)}$, established in Lemma~\ref{lemma naturality wrt group action}, we have
\begin{equation}\label{eq image in Omega}
\widetilde{(\mathrm{id}_X,\Phi)}_*\nabla\!\left(\hatsz_{\bi}x\otimes y\right)\in \Omega_\bullet(X\Box_\tau F).
\end{equation}
According to~\eqref{eq Szczarba twisted shuffle map},
the twisted shuffle map is given by
\begin{equation}\label{eq chain map for marked twisted tensor products}
\psi(x\otimes y)
=\sum_{\bi\in S_n}(-1)^{|\bi|}\widetilde{(\mathrm{id}_X,\Phi)}_*\nabla\!\left(\hatsz_{\bi}x\otimes y\right).
\end{equation}
Thus Equations~\eqref{eq image in Omega} and \eqref{eq chain map for marked twisted tensor products} imply \( \psi(x \otimes y) \in \Omega_\bullet(X \Box_\tau F)\). 
Since $\psi$ is a chain map, its restriction to subcomplexes~\eqref{eq chain map for marked twisted tensor products} defines a chain map
\[
\psi_\omega \colon \N(X)\otimes_{t_{\sz}} \Omega_\bullet(F,M) \longrightarrow \Omega_\bullet(X\Box_\tau F),
\]
as claimed.
\end{proof}

To show $\psi_\omega$ is an isomorphism, we first provide a statement analogue to~\cite[Lemma 5.3]{Szczarba1961}
in the case of marked twisted Cartesian products.
For this, let us consider the shuffle map
\[
\nabla_{X,F}\colon \N(X)\otimes_{t_{\sz}}\N(F)\to \N(X\times_\tau F)
\]
and the Alexander-Whitney map
\[
AW_{X,F}\colon N(X\times_\tau F)\to \N(X)\otimes_{t_{\sz}}\N(F).
\] 
By Remark~\ref{remark marked twisted Cartesian product and box product} and~Theorem~\ref{theorem box product Kunneth formula}, they induce the following isomorphisms
\[
\nabla_{X,F}\colon \N(X)\otimes_{t_{\sz}} \Omega_\bullet(F,M)=\Omega_\bullet(X^{\musSharp{}})\otimes_{t_{\sz}} \Omega_\bullet(F,M)\rightleftarrows \Omega_\bullet(X\times_\tau F) \cocolon AW_{X,F}
\]
which are inverse of each other.

Note that $\nabla_{X,F}$ and $AW_{X,F}$ are not chain maps in general, but they preserve the following filtrations 
\begin{equation}
\label{eq Ap Bp}
\begin{split}
A_p&=\bigoplus_{q\leq p}\N_q(X)\otimes_{t_{\sz}} \Omega_\bullet(F,M),\\
B_p&=\Omega_\bullet(\sk_p(X)\Box_\tau F).
\end{split} \end{equation}
Here $\sk_p(X)$ is the $p$-th skeleton of $X$, generated by all $q$-simplices with $q\leq p$.
Therefore, we obtain maps
\begin{equation}\label{eq iso shuffle aw on quotients}
\nabla'_{X,F}\colon A_p/A_{p-1}\rightleftarrows B_p/B_{p-1} \cocolon  AW'_{X,F}.
\end{equation}
\begin{lemma}[cf. {\cite[Lemma 5.3]{Szczarba1961}}]
\label{lemma differential}
The maps $\nabla'_{X,F}$ and $AW'_{X,F}$ are chain maps such that
\[
\text{$\nabla'_{X,F}AW'_{X,F}=\text{identity}~$ and $~AW'_{X,F}\nabla'_{X,F}=\text{identity}$.}
\]
\end{lemma}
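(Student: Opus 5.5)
The plan has two parts. First I show that on the associated graded pieces both differentials reduce to untwisted ones. Then I invoke the marked Eilenberg--Zilber isomorphism of Theorem~\ref{theorem box product Kunneth formula}.

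\textbf{The filtrations and the graded pieces.} Because $\nabla_{X,F}$ and $AW_{X,F}$ are mutually inverse isomorphisms between $\N(X)\otimes\Omega_\bullet(F,M)$ and $\Omega_\bullet(X\times_\tau F)$ as graded modules, and they preserve the filtrations $A_p$ and $B_p$, the induced maps $\nabla'_{X,F}$ and $AW'_{X,F}$ are automatically mutually inverse. It remains only to prove they are chain maps. For this I identify the differentials on $A_p/A_{p-1}$ and $B_p/B_{p-1}$.

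On $A_p/A_{p-1}$, take $x\otimes y$ with $x\in\N_p(X)$. The terms $\partial x\otimes y$ lie in $A_{p-1}$. In the twisting term $-(1\otimes\mu)(1\otimes t_{\sz}\otimes 1)(\Delta_X\otimes 1)$, the component of $\Delta_X x$ in $\N_p(X)\otimes\N_0(X)$ is killed, since $t_{\sz}$ vanishes in degree $0$. All other components have first factor of degree $<p$, so they lie in $A_{p-1}$. Hence the induced differential is $\pm\,1\otimes\partial_F$.

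On $B_p/B_{p-1}$, a simplex $(x',f)$ whose $X$-component has dimension $p$ (after normalising) changes its $X$-component only under the face maps. The only face that involves the twisting is $\partial_0$. Its first component is $\partial_0 x'$, which lies in $\sk_{p-1}$ whenever the $X$-part was nondegenerate of dimension $p$ in the relevant sense. So modulo $B_{p-1}$, $\partial_0$ agrees with the untwisted face map of $X\times F$. Thus $B_p/B_{p-1}$ coincides, as a chain complex, with the corresponding quotient for the untwisted box product $X^{\musSharp{}}\Box(F,M)$, namely $\Omega_\bullet(\sk_p X\Box F)/\Omega_\bullet(\sk_{p-1}X\Box F)$. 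By Remark~\ref{remark marked twisted Cartesian product and box product} the markings agree, so the path subcomplexes agree too.

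\textbf{Applying the untwisted Künneth isomorphism.} Under these identifications, $\nabla'_{X,F}$ and $AW'_{X,F}$ are the maps induced on graded pieces by the untwisted $EZ$ and $AW$ maps. By Theorem~\ref{theorem box product Kunneth formula}, applied to $\sk_p X$ and $\sk_{p-1}X$, these are mutually inverse chain isomorphisms that respect the skeletal filtration. Passing to the quotients, $\nabla'_{X,F}$ and $AW'_{X,F}$ are chain maps and inverse to each other.

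\textbf{Expected main obstacle.} The hard step is making the claim about $B_p/B_{p-1}$ precise. The skeletal filtration $\Omega_\bullet(\sk_p(X)\Box_\tau F)$ consists of chains in $X\times_\tau F$, and one must check that the twisted $\partial_0$ differs from the untwisted one only by terms landing in $B_{p-1}$. The tool here is the same argument as in \cite[Lemma 5.3]{Szczarba1961}: in the $EZ$ image of $x\otimes y$ with $\dim x=p$, any simplex on which $\partial_0$ acts non-trivially through $\tau$ has $X$-component $s_I x$ with $0\notin I$. If $0\notin I$, then $\partial_0 s_I x$ is a degeneracy of $\partial_0 x$, which lies in $\sk_{p-1}$.

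The one remaining subtlety is that $B_{p-1}$ must also be the path subcomplex. Here I use that $\Omega_\bullet$ of the marked product is, by the Künneth theorem, a direct sum of $\N_q(X)\otimes\Omega(F)$ components. So intersecting with the filtration is compatible, and both the quotient and the difference term stay inside the path complexes.
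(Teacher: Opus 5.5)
Your proposal is correct and follows essentially the paper's proof. The graded differential on $A_p/A_{p-1}$ reduces to $(-1)^p\,1\otimes\partial_F$ because $t_{\sz}$ vanishes in degree $0$. The twisted $\partial_0$ agrees with the untwisted one modulo $B_{p-1}$ by the case split on whether $s_0$ occurs in $s_I x$. The untwisted box-product K\"unneth isomorphism then finishes the argument. Getting mutual inverseness directly from the filtered inverse pair $\nabla_{X,F}$, $AW_{X,F}$ is a harmless shortcut.

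Your earlier sentence asserting $\partial_0 x'\in\sk_{p-1}$ is false when $0\in I$. As in the paper, that case is handled by $\tau(s_I x)=1$, which follows from $s_i\tau(x)=\tau(s_{i+1}x)$ and $\tau(s_0x)=1$.
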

\begin{proof} 
If $x\otimes y\in A_p$, then
\begin{equation}
\begin{split}
\partial_{t_{\sz}}(x\otimes y)
&=(\partial x)\otimes y+(-1)^{p} x\otimes \partial y-\sum \pm x^{(1)}\otimes \mu_{(F,M)}(t_{\sz}(x^{(2)})\otimes y),
\end{split}
\end{equation}
where the right-hand side sum ranges over the decomposition $\Delta(x)=\sum x^{(1)}\otimes x^{(2)}$.
Notice that $t_{\sz}(x^{(2)})=0$ if $\deg x^{(2)}=0$, so 
$(\partial x)\otimes y$ and $x^{(1)}\otimes \mu_{(F,M)}(t_{\sz}(x^{(2)})\otimes y)$ lie in~$A_{p-1}$.
Therefore, we have 
\begin{equation}\label{eq differential on A_p/A_{p-1}}
\partial_{t_{\sz}}(x\otimes y)=(-1)^{p} x\otimes \partial y \mod A_{p-1}.    
\end{equation}
  
If $x\in X_p$ is non-degenerate and $s_\mu=s_{i_1}\ldots s_{i_r}$ for some $i_1>\cdots>i_r$, then
\begin{equation}\label{eq partial zero s mu}
\partial_0 s_\mu x=
\begin{cases}
s_{i_1-1}\ldots s_{i_{r-1}-1} x &\text{ if } i_r=0,\\
s_{i_1-1}\ldots s_{i_r-1}\partial_0 x &\text{ if } i_r>0.
\end{cases}
\end{equation}
Note that $\partial_0 s_\mu x\in \sk_{p-1}X$ in the second case, while in the first case, 
\begin{equation}
\label{eq tau s_mu}
\tau(s_\mu x)=s_{i_1-1}\ldots s_{i_{r-1}-1} \tau(s_0 x)=1_{p+r-1},
\end{equation}
following from~\eqref{eq twisting function 4}, \eqref{eq twisting fuction 5} and the fact that all $s_i$'s are group homomorphisms.

For $(s_\mu x,y)\in B_p$ with $y\in F_{p+r}$, Equations~\eqref{eq partial zero s mu} and \eqref{eq tau s_mu} imply
\[
\partial_0(s_\mu x,y)=(\partial_0 s_\mu x,\tau(s_\mu x)\cdot \partial_0 y)=
(\partial_0 s_\mu x, \partial_0 y)\mod B_{p-1}.
\]
This means
\[
\partial(s_\mu x,y)=\sum_{i=0}^{p+r}(-1)^i(\partial_i s_\mu x,\partial_i y) \mod B_{p-1}.
\]

Hence the differentials on $A_p/A_{p-1}$ and $B_p/B_{p-1}$ agree with the non-twisted product case.  By~Theorem~\ref{theorem box product Kunneth formula}, we obtain chain maps $\nabla'_{X,F}$ and $AW'_{X,F}$ that are inverse of each other.
\end{proof}

Since 
$\psi_\omega(A_p)\subset B_p$, the map
$\psi_\omega$ induces a well-defined map on quotients
\[\psi'\colon A_p/A_{p-1}\to B_p/B_{p-1}.\]
Below we describe $\psi'$ in an explicit form.

\begin{lemma}[cf. {\cite[Corollary 5.5]{Szczarba1961}}]
For $x\in \N_p(X)$ and $y\in\Omega_q(F,M)$, we have
\[
\psi'(x\otimes y)=\sum_{(\mu,\nu)\in \Sh(p,q)} (-1)^{\sgn(\mu,\nu)}\left(
s_{\nu}x,\, s_{\nu}[s_0\sigma(x)\cdots s_0^p\sigma(\partial_0^{p-1}x)]\cdot s_{\mu}y 
\right),
\] 
where $\Sh(p,q)$ is the set of all $(p,q)$-shuffles.
\end{lemma}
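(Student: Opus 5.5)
We unwind the definition~\eqref{eq Szczarba twisted shuffle map} and work modulo $B_{p-1}$; we may take $x\in X_p$ non-degenerate. Write
\[
\psi(x\otimes y)=\sum_{\bi\in S_p}(-1)^{|\bi|}\,\widetilde{(\id_X,\Phi)}_*\nabla\bigl(\hatsz_{\bi}x\otimes y\bigr),
\]
where the first coordinate of $\hatsz_{\bi}x$ is $D_{\bi,0}x$. The plan is to show that every summand with $\bi\neq(0,\dots,0)$ lies in $B_{p-1}$. Then we identify the surviving summand $\bi=\bm{0}$ with the claimed formula, using the explicit form of the shuffle map
\[
\nabla(a\otimes b)=\sum_{(\mu,\nu)\in\Sh(p,q)}(-1)^{\sgn(\mu,\nu)}(s_\nu a,\, s_\mu b).
\]

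\textbf{Step 1: the operators $D_{\bi,0}$.} From the inductive definition, $D_{\bi,0}$ with $k=0$ falls in the case $k<i_1$ whenever $i_1>0$, and then contains a face operator $\partial_{i_1}$. More precisely, $D_{\bi,0}$ is a composite of derived operators of $s_0\partial_{i}$. By induction on the length of $\bi$ we show that $D_{\bi,0}=\id$ if $\bi=\bm 0$. Otherwise $D_{\bi,0}$ has the form $s_{\alpha}\partial_{\beta}$ with a nonempty face part. Hence $D_{\bi,0}x$ is a degeneracy of a simplex of dimension $<p$, so it lies in $\sk_{p-1}X$. Since $\sk_{p-1}(X)\times_\tau F$ is a simplicial subset of $X\times_\tau F$, all the simplices $(s_\nu D_{\bi,0}x,\dots)$ lie there. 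Moreover, by Lemma~\ref{lemma Szczarba's twisted shuffle map} the corresponding summand lies in $\Omega_\bullet$, so it lies in $B_{p-1}$. This is where we expect the main care: the bookkeeping of derived operators. We would first try to verify the general claim that the derived operator $D'$ shifts all indices by one, so that a nonzero $i_j$ anywhere produces a face operator in $D_{\bi,0}$. We only need that such a face operator reduces the nondegenerate dimension, not its exact form.

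\textbf{Step 2: the summand $\bi=\bm 0=(0,\dots,0)$.} Here the sign is $+1$. We compute $D_{\bm 0,k}$ for $k\ge1$: by the case $k>i_1=0$ we get $D_{\bm0,k}=D'_{\bm0',k-1}s_0$. Inductively this gives $D_{\bm0,k}\sigma(\partial_0^{k-1}x)=s_0^k\sigma(\partial_0^{k-1}x)$, possibly up to a reindexing of degeneracies that must be checked against the convention for derived maps; the stated formula suggests $s_0^k$. Hence
\[
\hatsz_{\bm0}x=\bigl(x,\ s_0\sigma(x)\cdots s_0^p\sigma(\partial_0^{p-1}x)\bigr)\in X_p\times G_p.
\]

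\textbf{Step 3: conclusion.} Applying the shuffle formula to $\hatsz_{\bm0}x\otimes y$ gives
\[
\sum_{(\mu,\nu)}(-1)^{\sgn(\mu,\nu)}\bigl(s_\nu x,\ s_\nu[\cdots],\ s_\mu y\bigr).
\]
Applying $(\id_X,\Phi)$ then gives the product in the $F$-coordinate, since degeneracies are group homomorphisms compatible with the action. This yields exactly the stated expression modulo $B_{p-1}$.
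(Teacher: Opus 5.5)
Your proposal is correct and follows the paper's own argument: you expand $\psi$ through the shuffle formula, discard each $\bi$-summand with $\bi\neq(0,\dots,0)$ because $D_{\bi,0}$ then involves a face operator (so that summand, already in $\Omega_\bullet$, lies in $B_{p-1}$), and read off the formula from $D_{\bm 0,0}=\id$ and $D_{\bm 0,k}=s_0^k$. The two points you left to check do hold: since $D'$ raises every index by one, $D_{\bm 0,k}=s_{k-1}\cdots s_1s_0=s_0^k$; and for $\bi\neq\bm 0$ the operator $D_{\bi,0}$ is not the identity (passing from $D$ to $D'$ sends non-identity operators to non-identity operators), so for non-degenerate $x$ the simplex $D_{\bi,0}x$ is a degeneracy of a proper face of $x$.
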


\begin{proof}
    
Using the definition of $\psi_\omega$,
we get
\begin{multline*}
\psi'(x\otimes y)
=\sum_{\bi\in S_p} \sum_{(\mu,\nu)\in \Sh(p,q)}(-1)^{|\bi|+\sgn(\mu,\nu)} 
\\
\left(s_\nu D_{\bi,0} x,\, s_{\nu}\!\left[D_{\bi,1}\sigma(x)\,D_{\bi,2}\sigma(\partial_0 x)\,\cdots\, D_{\bi,p}\sigma((\partial_0)^{p-1}x)\right]\cdot s_\mu y \right).
\end{multline*}
If $\bi\neq (0,\ldots,0)$, the operator $D_{\bi,0}$ contains a face map. Hence the term $s_\nu D_{\bi,0}x$ in the sum will be contained in $B_{p-1}$, so it vanishes in $B_p/B_{p-1}$. If $\bi= (0,\ldots,0)$,
then $D_{\bi,0}=\text{identity}$ and
$D_{\bi,k}=s_0^k,$
which implies the asserted formula.
\end{proof}

Let $C_p=\N(\sk_p(X)\times_\tau F).$
We recall from~\cite[p.\,208]{Szczarba1961} the chain isomorphism 
\[g\colon C_p/C_{p-1}\to C_p/C_{p-1}\] and its inverse 
\[h\colon C_p/C_{p-1}\to C_p/C_{p-1}.\] 
For any non-degenerate simplex $x\in X_p$, $y\in F_{p+r}$ and $s_\mu=s_{i_1}\ldots s_{i_r}$ with $i_1>\cdots>i_r$, we have
\[\begin{split}
g(s_\mu x, y)& =
\big(
s_\mu x,\,
s_\mu\big[s_0\sigma(x)\, \cdots \,(s_0)^p\sigma((\partial_0)^{p-1}x)\big] \cdot y
\big);
\\
h(s_\mu x, y)& =
\big(
s_\mu x,\,
s_\mu\big[(s_0)^r\tau((\partial_0)^{r-1} x)\, \cdots\, s_0\tau(x)\big] \cdot y
\big).\end{split}
\]

Since $B_{p-1}=B_p \cap C_{p-1}$, there is an injection
\[
B_p/B_{p-1}=B_p/(B_p \cap C_{p-1})= (B_p+C_{p-1})/C_{p-1}\hookrightarrow C_p/C_{p-1}.
\]
Composing this injection with $g$ and $h$ yields maps 
\[\hat{g},\, \hat{h}\colon B_p/B_{p-1}\to C_p/C_{p-1}.\]
\begin{lemma}
The images of $\hat{g}$ and $\hat{h}$ are contained in $B_p/B_{p-1}$. 
As a result, the maps $\hat{g}$ and $\hat{h}$ restrict to
\[
\hat{g}, \hat{h}\colon B_p/B_{p-1}\to B_p/B_{p-1},
\]
which are inverses of each other.   
\end{lemma}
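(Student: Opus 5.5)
The strategy is to check that $g$ and $h$ preserve the submodule $(B_p+C_{p-1})/C_{p-1}$ of $C_p/C_{p-1}$. Once this holds, $\hat g$ and $\hat h$ restrict to endomorphisms of $B_p/B_{p-1}$. They are mutually inverse there, because $g$ and $h$ are mutually inverse on $C_p/C_{p-1}$ by \cite[p.\,208]{Szczarba1961}. So everything rests on showing
\[
g(B_p)\subset B_p+C_{p-1}, \qquad h(B_p)\subset B_p+C_{p-1}.
\]

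The first step is to realise $g$ and $h$ modulo $C_{p-1}$ as a sum of terms produced by marked maps, in the same way as in the proof of Lemma~\ref{lemma Szczarba's twisted shuffle map}. For a nondegenerate $x\in X_p$, set $\gamma(x)=s_0\sigma(x)\cdots s_0^p\sigma(\partial_0^{p-1}x)\in G_p$. Then
\[
g(s_\mu x,y)=(s_\mu x,\,s_\mu\gamma(x)\cdot y)=(\id_X,\Phi)\big(s_\mu x,\,s_\mu\gamma(x),\,y\big).
\]
The simplex $(s_\mu x,s_\mu\gamma(x))=s_\mu(x,\gamma(x))$ is a degeneracy of the single $p$-simplex $(x,\gamma(x))$ of $X\times G$. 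Hence $g$ is the composite of two maps. The first is the linear map $\Theta\colon(s_\mu x,y)\mapsto(s_\mu x,s_\mu\gamma(x),y)$, which fixes the $X$-coordinate, inserts the $G$-coordinate by applying the same degeneracy operator, and leaves $y$ untouched. The second is $(\id_X,\Phi)_*$.

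The second step is to control these two maps. The map $(\id_X,\Phi)$ is marked from $(X\times G)^{\musSharp{}}\Box(F,M)$, modulo lower skeleta. The argument is that of Lemma~\ref{lemma naturality wrt group action}, and it uses the hypothesis that $gs_0(m)$ is degenerate. For $\Theta$, the cleanest route is via the Künneth isomorphisms. By Lemma~\ref{lemma differential} and Theorem~\ref{theorem box product Kunneth formula}, any element of $B_p/B_{p-1}$ can be written as $\nabla'(\sum x\otimes y)$ with $x\in\N_p(X)$ and $y\in\Omega_\bullet(F,M)$. Thus it is a sum over shuffles of $(s_\nu x,s_\mu y)$, and
\[
g\nabla'(x\otimes y)=\sum\pm(s_\nu x,\,s_\nu\gamma(x)\cdot s_\mu y).
\]
This is exactly the shuffle product $\nabla\big((x,\gamma(x))\otimes y\big)$ in $(X\times G)\times F$, pushed forward by $(\id_X,\Phi)$.

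The third step combines these facts. Theorem~\ref{theorem box product Kunneth formula} applied to $(X\times G)^{\musSharp{}}\Box(F,M)$ puts $\nabla\big((x,\gamma(x))\otimes y\big)$ in $\Omega_\bullet$. Lemma~\ref{lemma naturality wrt group action} then shows that its image lies in $B_p$, modulo $C_{p-1}$. The coordinate $\gamma(x)$ is in fact $\hatsz_{(0,\dots,0)}x$ with its $X$-part, which makes this the same computation as the explicit formula for $\psi'$. Consequently
\[
\hat g=\psi'\circ(\nabla')^{-1},
\]
and $\psi'$ lands in $B_p/B_{p-1}$ by Lemma~\ref{lemma Szczarba's twisted shuffle map}. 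For $h$ the argument is identical, with $\gamma$ replaced by $(s_0)^r\tau(\partial_0^{r-1}x)\cdots s_0\tau(x)$. This element depends on $r$, i.e.\ on the degeneracy degree, and that is the main obstacle: it cannot be packaged as a single shuffle of one $p$-simplex. I would handle it by avoiding $h$ directly. Since $\hat g$ is injective on $B_p/B_{p-1}$ (as $g$ is invertible), it suffices to show that $\hat g$ is surjective onto $B_p/B_{p-1}$. Then $\hat g$ is an automorphism of $B_p/B_{p-1}$, and $\hat h=\hat g^{-1}$ preserves it.

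To get surjectivity, filter $B_p/B_{p-1}$ by the number of degeneracies on the $X$-factor. On the associated graded, the degeneracy operator $s_\mu$ with $i_r>0$ applied to $\gamma(x)$ should act unipotently. Concretely, the leading term of $(s_\mu x, s_\mu\gamma(x)\cdot y)$ is $(s_\mu x,y)$, up to terms in which $\gamma$ has been moved by the marking-preserving action. This unipotence is the delicate point. If it fails, the fallback is to verify $h(B_p)\subset B_p+C_{p-1}$ directly on the allowedness conditions: $h$ does not change the $X$-coordinate, and it multiplies $y$ by an element of $G$. By the $G_1$-invariance of $M$ and of degenerate edges, multiplying $y$ by an element of $G$ preserves exactly the edge conditions defining $\mathcal{A}$ in $X\Box_\tau F$. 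Hence $h$ preserves $\A_*$ and commutes with $\partial$ modulo $C_{p-1}$, so it preserves the maximal subcomplex.
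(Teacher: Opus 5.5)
\textbf{Where this stands.} Your argument for $\hat g$ is correct and is the paper's argument: $g\nabla'_{X,F}(x\otimes y)=(\id_X,\Phi)_*\nabla\big((x,\gamma(x))\otimes y\big)=\psi'(x\otimes y)$, with $(x,\gamma(x))=\hatsz_{(0,\dots,0)}x$. The gap is in the $h$ half, and it starts from an obstacle that does not exist. Because $h=g^{-1}$ and the $s_i$ are homomorphisms, $h$ multiplies the fibre coordinate by $s_\mu\big(\gamma(x)^{-1}\big)$. Here $\gamma(x)^{-1}=(s_0)^p\tau(\partial_0^{p-1}x)\cdots s_0\tau(x)$ depends only on $x$. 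The $r$ in the displayed formula for $h$ has to be $p$; for $r>p$ that expression is not even defined. Even read literally, every term of $h\nabla'_{X,F}(x\otimes y)$ with $y\in\Omega_q(F,M)$ has the same $r=q$. Either way the multiplier is a single element $\gamma'\in G_p$ for the whole shuffle sum, so
\[
h\nabla'_{X,F}(x\otimes y)=(\id_X,\Phi)_*\nabla\big((x,\gamma')\otimes y\big).
\]
Your $g$-argument then applies verbatim: Theorem~\ref{theorem box product Kunneth formula} for $(X\times_\tau G)^{\musSharp{}}\Box(F,M)$, followed by Lemma~\ref{lemma naturality wrt group action}. This places the element in $B_p$ itself, which settles the $h$ case at once.

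\textbf{Why your chosen route fails.} The map $g$ sends each simplex $(s_\mu x,y)$ of $C_p/C_{p-1}$ to a single simplex, so it just permutes the simplicial basis. There is no ``leading term plus corrections'' that could be unipotent. The filtration by the number of $X$-degeneracies is also constant ($=n-p$) in degree $n$. Moreover, surjectivity of $\hat g$ onto $B_p/B_{p-1}$ is equivalent to $h(B_p/B_{p-1})\subseteq B_p/B_{p-1}$, so it is not an easier target.

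\textbf{Your fallback.} It is sound as far as it goes, and it is a shortcut compared with the paper. The paper shows termwise that $h_{\mu,\nu}$ is allowed iff $g_{\mu,\nu}$ is, uses Eilenberg--Zilber to see the $g_{\mu,\nu}$ are distinct, and then runs an equivalence-class cancellation to show $\partial h\nabla'_{X,F}(x\otimes y)$ is allowed. You get both facts at once, because $h$ permutes allowed simplices and is a chain map of $C_p/C_{p-1}$. However, this only shows that $h$ preserves the largest subcomplex of $C_p/C_{p-1}$ contained in the image of $\A_*$. That subcomplex contains $(B_p+C_{p-1})/C_{p-1}$, but equality needs an argument. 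An element of $B_p$ must also have allowed boundary terms lying over $\sk_{p-1}(X)$, and these are computed with the twisted $\partial_0$. The paper's argument works with the differential of $B_p/B_{p-1}$ and stops at the same point. The packaging argument above avoids the issue entirely.
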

\begin{proof}

Recall from~\eqref{eq iso shuffle aw on quotients} that there is an isomorphism
\[
\nabla'_{X,F}\colon A_p/A_{p-1}\to B_p/B_{p-1}.
\]
Thus, it is enought to show that $g\nabla'_{X,F}(A_p/A_{p-1})\in B_p/B_{p-1}$.

For any non-degenerate simplex
${x}\in X_p$ and $\yy \in \Omega_q(F,M)$, 
let us compute 
\begin{multline}\label{eq g nabla}
g\nabla'_{X,F}(\xx\otimes \yy)
=\sum_{(\mu,\nu)\in \Sh(p,q)}(-1)^{\sgn(\mu,\nu)} g (s_{\nu} \xx, s_{\mu}\yy)\\
=\sum_{(\mu,\nu)\in \Sh(p,q)}(-1)^{\sgn(\mu,\nu)}
\underbrace{\big(
s_\nu \xx,\,
s_\nu\big[s_0\sigma(\xx)\, \cdots \,(s_0)^p\sigma((\partial_0)^{p-1}\xx)\big] \cdot (s_\mu y)
\big)}_{\colonequals g_{\mu,\nu}(x\otimes y)}.
\end{multline}
This coincides with $\psi'(\xx\otimes \yy)$, which is in $B_p/B_{p-1}$.
Therefore $g$ restricts to a map
\[
\hat{g}\colon B_p/B_{p-1}\to B_p/B_{p-1}.
\]

For $h$, we also consider its composition with $\nabla'_{X,F}$:
\[
h\nabla'_{X,F}(\xx\otimes \yy)
=\sum_{(\mu,\nu)\in \Sh(p,q)}(-1)^{\sgn(\mu,\nu)}
\underbrace{\Big(
s_\nu \xx,\,
s_\nu\big[(s_0)^r\tau((\partial_0)^{r-1} x)\, \cdots\, s_0\tau(x)\big] \cdot (s_\mu y)
\Big)}_{\colonequals h_{\mu,\nu}(x\otimes y)}.
\]
The terms appearing in $g\nabla'_{X,F}(\xx\otimes \yy)$ and $h\nabla'_{X,F}(\xx\otimes \yy)$ are both indexed by~$(\mu,\nu)$. 
We denote them by $g_{\mu,\nu}(x\otimes y)$ and $h_{\mu,\nu}(x\otimes y)$, respectively. Since the $G_1$-action preserves 1-degeneracies and the marked edges, it follows by inspection that $g_{\mu,\nu}(x\otimes y)$ is allowed if and only if $h_{\mu,\nu}(x\otimes y)$ is allowed. 

Moreover, since $x$ is non-degenerate, the Eilenberg-Zilber Lemma implies that if~$s_\nu x=s_{\nu'} x$, then $s_\nu=s_{\nu'}$. 
Thus for distinct $({\mu,\nu})$, the terms $g_{\mu,\nu}$ are distinct. 
As the sum $\sum_{(\mu,\nu)}(-1)^{\sgn(\mu,\nu)}g_{\mu,\nu}(x\otimes y)=g\nabla'_{X,F}(\xx\otimes \yy)$ is allowed, it follows that every individual term $g_{\mu,\nu}(x\otimes y)$ is allowed. Consequently, each corresponding term $h_{\mu,\nu}(x\otimes y)$ is allowed as well.
Therefore the entire sum $h\nabla'_{X,F}(\xx\otimes \yy)$ is allowed.

It remains to show that
$\partial (h\nabla'_{X,F}(\xx\otimes \yy))$ is allowed. 
As shown in Lemma~\ref{lemma differential}, the differential on $B_p/B_{p-1}$ is the same as the untwisted case. Therefore, we have
\begin{align*}
\partial (g\nabla'_{X,F}(\xx\otimes \yy))
=&\sum_{(\mu,\nu)\in \Sh(p,q)}
\sum_{i=0}^{p+q}(-1)^{\sgn(\mu,\nu)}\partial_ig_{\mu,\nu}(x\otimes y),\\
\partial (h\nabla'_{X,F}(\xx\otimes \yy))
=&\sum_{(\mu,\nu)\in \Sh(p,q)}
\sum_{i=0}^{p+q}(-1)^{\sgn(\mu,\nu)}\partial_ih_{\mu,\nu}(x\otimes y),\\
\intertext{where} 
\partial_ig_{\mu,\nu}(x\otimes y)=(-1)^{i}\Big(
\partial_i s_\nu \xx,\,
\partial_i& s_\nu\big[s_0\sigma(\xx)\, \cdots \,(s_0)^p\sigma((\partial_0)^{p-1}\xx)\big] \cdot  (\partial_i s_\mu y
)\Big), \\
\partial_ih_{\mu,\nu}(x\otimes y)=(-1)^{i}\Big(
\partial_i s_\nu \xx,\,
\partial_i &s_\nu\big[(s_0)^r\tau((\partial_0)^{r-1} x)\, \cdots\, s_0\tau(x)\big] \cdot (\partial_i s_\mu y
)\Big).
\end{align*}

Again, under the assumption that the $G_1$-action preserves the degenerate $1$-simplices, we have that
$\partial_ih_{\mu,\nu}(x\otimes y)$ is allowed if and only if $\partial_ig_{\mu,\nu}(x\otimes y)$ is allowed.
Let 
\[\begin{split}
I&=\{(\mu,\nu,i)\mid \partial_ig_{\mu,\nu}(x\otimes y) \text{ is not allowed}\}\\
&=\{(\mu,\nu,i)\mid \partial_ih_{\mu,\nu}(x\otimes y) \text{ is not allowed}\}.
\end{split}\]
We define an equivalence relation $\sim$ on $I$ by
\[(\mu,\nu,i)\sim (\mu',\nu',i')\]
if and only if the norm forms of $\partial_{i}s_{\nu}$ and 
$\partial_{i'}s_{\nu'}$ are equal. 
Let $[\mu,\nu,i]\in I/\sim$ denote an equivalence class, and define 
\[
g_{[\mu,\nu,i]}\colonequals\sum_{(\mu,\nu,i)\in [\mu,\nu,i]}(-1)^{\sgn(\mu,\nu)}\partial_i g_{\mu,\nu}(x\otimes y).
\]
If $(\mu_0,\nu_0,i_0)\notin [\mu,\nu,i]$, then
$\partial_{i_0}s_{\nu_0}\xx\neq \partial_is_{\nu_i}\xx$. 
So the terms in other equivalence classes cannot cancel $g_{[\mu,\nu,i]}$.
Hence, as $g_{[\mu,\nu,i]}$ is not allowed, it must be zero 
(otherwise $\partial g\nabla'_X(\xx\otimes \yy)$ would not be allowed, a contradiction).

Moreover, each $g_{[\mu,\nu,i]}$ can be written as
\begin{multline*}
(\partial_i s_{\nu}\xx,\, \partial_is_\nu\big[s_0\sigma(\xx)\, \cdots \,(s_0)^p\sigma((\partial_0)^{p-1}\xx)\big] \cdot y^+_{[\mu,\nu,i]})-\\(\partial_i s_{\nu}\xx,\, \partial_is_\nu\big[s_0\sigma(\xx)\, \cdots \,(s_0)^p\sigma((\partial_0)^{p-1}\xx)\big] \cdot y^-_{[\mu,\nu,i]}),
\end{multline*}
where
\[y^+_{[\mu,\nu,i]}=\sum_{\substack{(\mu,\nu,i)\in [\mu,\nu,i]\\ (-1)^{\sgn(\mu,\nu)+i}=1}}
 \partial_i s_\mu y\quad
\text{and}\quad 
y^-_{[\mu,\nu,i]}=\sum_{\substack{(\mu,\nu,i)\in [\mu,\nu,i]\\ (-1)^{\sgn(\mu,\nu)+i}=-1}}
 \partial_i s_\mu y.\]
Since $g_{[\mu,\nu,i]}=0$,  we conclude
$y^+_{[\mu,\nu,i]}-y^-_{[\mu,\nu,i]}=0$. Therefore, 
\[
h_{[\mu,\nu,i]}=
\Big(
\partial_i s_\nu \xx,\,
\partial_i s_\nu\big[(s_0)^r\tau((\partial_0)^{r-1} \xx)\, \cdots\, s_0\tau(\xx)\big] \cdot 
\big(y^+_{[\mu,\nu,i]}-y^-_{[\mu,\nu,i]}\big)\Big)=0.
\]

Finally,
the total contribution from non-allowed terms in $\partial (h\nabla'_{X,F}(\xx\otimes \yy))$ is given by
\[\sum_{(\mu,\nu,i)\in I} (-1)^{\sgn(\mu,\nu)}\partial_ih_{\mu,\nu}(x\otimes y)=
\sum_{[\mu,\nu,i]\in I/\sim} h_{[\mu,\nu,i]}=0.
\]
Therefore $\partial (h\nabla'_{X,F}(\xx\otimes \yy))$ is allowed and hence
\[ h\nabla'_{X,F}(\xx\otimes \yy)\in B_p/B_{p-1}.\]

The claim that $\hat{g}$ and $\hat{h}$ are inverses of each other follows immediately from the fact that ${g}$ and ${h}$ are inverses of each other.
\end{proof}

\begin{theorem}\label{theorem Szczarba's twisted shuffle map}
Under the assumptions of Lemma~\ref{lemma Szczarba's twisted shuffle map},
the restriction of Szczarba's twisted shuffle map 
\[
\psi_\omega\colonequals \left.\psi\right|_{\N(X)\otimes_{t_{\sz}} \Omega_\bullet(F,M)} \colon (\N(X)\otimes_{t_{\sz}} \Omega_\bullet(F,M),\partial_{t_{\sz}})\to (\Omega_\bullet(X\Box_\tau F),\partial)
\]
is an isomorphism of chain complexes.
In particular, it induces an isomorphism on homology. 
Moreover, the construction is natural with respect to \(G\)-invariant marked simplicial
subsets. In other words, if \((F',M')\subset (F,M)\) is a \(G\)-invariant marked simplicial subset,
then the inclusion induces a commutative diagram of chain complexes
\[
\begin{tikzcd}
\N(X)\otimes_{t_{\sz}} \Omega_\bullet(F',M)
\arrow[r, "\psi_\omega"]
\arrow[d, hook]
&
\Omega_\bullet(X\Box_\tau F')
\arrow[d, hook]
\\
\N(X)\otimes_{t_{\sz}} \Omega_\bullet(F,M)
\arrow[r, "\psi_\omega"]
&
\Omega_\bullet(X\Box_\tau F),
\end{tikzcd}
\]
and hence the induced isomorphism is natural with respect to such inclusions.
\end{theorem}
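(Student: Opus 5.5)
Lemma~\ref{lemma Szczarba's twisted shuffle map} already shows that $\psi_\omega$ is a well-defined chain map. What remains is to prove that it is bijective, and then to check naturality. The natural route is a filtration argument using the filtrations $A_p$ and $B_p$ of~\eqref{eq Ap Bp}. Both filtrations are exhaustive and bounded below: $A_{-1}=0=B_{-1}$, and in each total degree $n$ we have $(A_p)_n$ equal to the whole degree-$n$ part once $p\ge n$, and likewise for $(B_p)_n$, since an $n$-simplex of $X$ lies in $\sk_n X$. Moreover $\psi_\omega(A_p)\subset B_p$, because each term of $\psi(x\otimes y)$ has first coordinate $s_\nu D_{\bi,0}x$, which lies in $\sk_p X$ whenever $\deg x=p$.

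By the five lemma applied inductively on $p$, bijectivity of $\psi_\omega\colon A_p\to B_p$ for every $p$ reduces to bijectivity of the induced map $\psi'\colon A_p/A_{p-1}\to B_p/B_{p-1}$. The induction runs over the short exact sequences $0\to A_{p-1}\to A_p\to A_p/A_{p-1}\to 0$ and their $B$-analogues. These are sequences of graded modules, and we only need bijectivity degreewise. Since everything is finite in each degree, the union over $p$ then gives that $\psi_\omega$ is bijective. A bijective chain map is a chain isomorphism, and the homology statement follows at once.

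To show $\psi'$ is bijective, factor it using the preceding lemmas. The explicit formula for $\psi'$ (the analogue of \cite[Corollary 5.5]{Szczarba1961}), together with the computation~\eqref{eq g nabla}, gives
\[
\psi'=\hat g\circ \nabla'_{X,F}\colon A_p/A_{p-1}\to B_p/B_{p-1}.
\]
By Lemma~\ref{lemma differential}, $\nabla'_{X,F}$ is an isomorphism with inverse $AW'_{X,F}$. By the previous lemma, $\hat g$ is a bijection of $B_p/B_{p-1}$ with inverse $\hat h$. Hence $\psi'$ is bijective, with inverse $AW'_{X,F}\circ\hat h$.

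The main technical point is to verify that this factorisation holds on all of $A_p/A_{p-1}$, not only on generators $x\otimes y$ with $x$ a non-degenerate simplex. In $\N(X)$ modulo degenerate simplices, though, $\N_p(X)\otimes\Omega_q(F,M)$ is spanned by such tensors with $y$ an arbitrary element of $\Omega_q$, and both sides are $\kk$-linear in $y$, so the formula suffices. One should also check that the five-lemma step uses only bijectivity and not exactness of homology, which is fine.

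For naturality, let $(F',M')\subset(F,M)$ be $G$-invariant. Then $X\Box_\tau F'\subset X\Box_\tau F$ is a marked simplicial subset, so $\Omega_\bullet$ of it includes into $\Omega_\bullet(X\Box_\tau F)$. The action map $\mu_{(F',M')}$ is the restriction of $\mu_{(F,M)}$, so the twisted tensor products include compatibly. Finally, $\psi$ is defined by the same formula~\eqref{eq Szczarba twisted shuffle map} on both sides and only involves $\hatsz_{\bi}x$, the shuffle map and the action, all of which commute with the inclusion. Hence the square commutes, and taking homology gives the naturality of the induced isomorphism.
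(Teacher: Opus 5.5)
Your proposal is correct and follows the paper's proof essentially step for step. You filter by $A_p$ and $B_p$, identify $\psi'=\hat{g}\circ\nabla'_{X,F}$ on $A_p/A_{p-1}$ so that it is bijective by Lemma~\ref{lemma differential} and the lemma on $\hat{g},\hat{h}$, conclude by induction on $p$ with the five lemma, and read naturality off the defining formula of $\psi$. The only differences are cosmetic: you start the induction at $A_{-1}=B_{-1}=0$ instead of checking $p=0$ by hand, and you spell out that the filtrations stabilise in each degree, which is what lets you pass from $A_p\to B_p$ to all of $\psi_\omega$ (the paper leaves this implicit).
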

\begin{proof}
Recall that
$A_p=\bigoplus_{q\leq p}\N_q(X)\otimes_{t_{\sz}} \Omega_\bullet(F,M)$ and $
B_p=\Omega_\bullet(\sk_p(X)\Box_\tau F)$ are increasing filtrations of chain complexes.
Since $\psi'=\hat{g}\nabla'_{X,F}$ by~\eqref{eq g nabla}, and both $\hat{g}$ and $\nabla'_{X,F}$ are isomorphisms, it follows $\psi'\colon A_p/A_{p-1}\to B_p/B_{p-1}$ is an isomorphism.

We now prove by induction on $p$ that the restriction of $\psi_\omega$
\[\psi_\omega\colon A_p\to B_p\] is an isomorphism.
For the case $p=0$, observe that $A_0$ is generated by $x\otimes f$, where $x\in X_0$ and $ f\in \Omega_\bullet(F,M)$,
while $B_0$ is generated by pairs $(s_0^r x, f)$, where $x\in X_0$ and $f\in \Omega_r(F,M)$.
Then $\psi_\omega$ maps $(x,f)$ to $(s_0^r x, f)$ which is clearly an isomorphism.

Assume $\psi_\omega\colon A_{p-1}\to B_{p-1}$ is an isomorphism.
Consider the following diagram of short exact sequences 
\[
\begin{tikzcd}
0\ar[r] &A_{p-1}\ar[r]\ar[d,"\psi_\omega"] &A_p \ar[r]\ar[d,"\psi_\omega"] & A_p/A_{p-1}\ar[d,"\psi'"]\ar[r]&0\\
0\ar[r] &B_{p-1}\ar[r] &B_p \ar[r] & B_p/B_{p-1}\ar[r]&0.
\end{tikzcd}
\]
By the inductive hypothesis, the left vertical map is an isomorphism. We have shown that the right vertical map $\psi'$
is also an isomorphism. Therefore, by the Five Lemma, the middle vertical map is an isomorphism as well.
Thus, $\psi_\omega$ is an isomorphism.
In particular, it induces an isomorphism on homology.

The naturality with respect to \(G\)-invariant marked simplicial subsets follows formally from the functoriality of Szczarba’s twisting
cochain and the definition of the twisted shuffle map $\psi$.
\end{proof}


\section{Equivariant path (co)homology}

\subsection{Borel constructions}
Let $\simgp$ be a simplicial group. Consider the universal $\simgp$-bundle $E\simgp\xrightarrow{} B\simgp$; see for instance~\cite[\S 21]{May}. 
If $\simgp$ acts on a simplicial set $X$ from the left, the Borel construction of $X$ is defined as the quotient 
\[E\simgp\times_{\simgp} X\colonequals (E\simgp\times X)/\simgp,\]
where the $\simgp$-action on  $E\simgp\times X$ is given by 
\[h(g_n,g_{n-1},\ldots, g_0,x)=(g_nh,g_{n-1},\ldots, g_0,h^{-1}x)\]
for $h\in G_n$, $g_i\in G_i$ and $x\in X_n$.
The Borel construction is also denoted by $X_{\simgp}$.

\begin{lemma}\label{lemma condition for marked borel construction}
 Let $(X,M)$ be a marked simplicial $\simgp$-set. 
 Suppose that for all $x\in X_0$ and $g\in \simgp_1$, the $1$-simplex $gs_0(x)$ is degenerate.
 Then the pair 
 \[\left(E\simgp\times_{\simgp} X, ((E\simgp)_1\Box M)/\simgp_1
\right)\]
is a marked simplicial set.
\end{lemma}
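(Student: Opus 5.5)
The claim amounts to two things. The set $((E\simgp)_1\Box M)/\simgp_1$ must be a well-defined subset of the $1$-simplices of $E\simgp\times_\simgp X$, and it must contain every degenerate $1$-simplex. I read $(E\simgp)_1\Box M$ as the set of marked edges of the box product $(E\simgp)^{\musSharp{}}\Box(X,M)$, namely
\[
(E\simgp)_1\times s_0(X_0)\;\cup\; s_0((E\simgp)_0)\times M \subset (E\simgp\times X)_1 .
\]
The plan is to show this subset is stable under the $\simgp_1$-action on $E\simgp\times X$ used to form the quotient. Its image under the quotient map $q\colon E\simgp\times X\to E\simgp\times_\simgp X$ is then simply the set of orbits of marked edges. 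After that, it remains to check the degenerate edges.

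\textbf{Invariance.} Take $h\in\simgp_1$. The action sends $(e,x)$ to $(eh,h^{-1}x)$, where $eh$ denotes the right action on the last coordinate $g_1$ of $e=(g_1,g_0)$. I would treat the two types of marked edges in turn.
\begin{itemize}
\item For $(e,s_0(x))$ with $x\in X_0$, the second coordinate becomes $h^{-1}s_0(x)$. This is degenerate by the hypothesis, applied to $h^{-1}\in\simgp_1$, so it equals $s_0(x')$ for some $x'\in X_0$. The first coordinate $eh$ lies in $(E\simgp)_1$, which is fully marked.
\item For $(s_0(e_0),m)$ with $m\in M$, the second coordinate $h^{-1}m$ lies in $M$, because the action preserves markings (Definition~\ref{def simplicial action on a marked simplicial complex}). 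For the first coordinate, I must check that $s_0(e_0)h$ is again degenerate in $E\simgp$. Using the explicit description of $E\simgp$ from \cite[\S21]{May} with $e_0=(g_0)$, I get $s_0(e_0)=(s_0 g_0\cdot ?,\, g_0)$ up to conventions, and right multiplication by an arbitrary $h$ generally destroys degeneracy.
\end{itemize}

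The second bullet is the main obstacle, and I expect the honest fix to be a change of representative rather than invariance of the set itself. The marking is defined as an image of orbits, so all I need is that each orbit meeting the marked set is recorded. The subset therefore does not have to be literally invariant: I define the marked edges as $q\bigl((E\simgp)_1\Box M\bigr)$, and then nothing needs to be checked for well-definedness. The hypothesis on $gs_0(x)$ would then only be needed for compatibility with Lemma~\ref{lemma naturality wrt group action}. Alternatively, this hypothesis guarantees that orbits of type-one edges consist entirely of type-one edges. That makes $q$ restricted to the marked set well behaved, which the later identification with a marked twisted Cartesian product $B\simgp^{\musSharp{}}\Box_\tau(X,M)$ uses.

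\textbf{Degenerate edges.} Every degenerate $1$-simplex of the quotient has the form $s_0(q(e_0,x))=q(s_0e_0,s_0x)$, since $q$ is simplicial. The edge $(s_0e_0,s_0x)$ is of the first type, as $s_0x\in s_0(X_0)$. Equally, it is of the second type, because $M$ contains all degenerate $1$-simplices. Its image is therefore marked, and this step is routine. So the real content is just the orbit-compatibility above, and I would write it as the check that $q$ maps the box-product marking onto a set containing $s_0$ of all vertices.
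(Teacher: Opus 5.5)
Your proof is correct, but it takes a different route from the paper. The paper argues that the box-product marking $(E\simgp)_1\times s_0(X_0)\cup s_0((E\simgp)_0)\times M$ is itself $\simgp_1$-invariant, using the same two-case analysis you start with. In the second case it asserts that $(s_0(g_0)h,1)$ is still degenerate in $E\simgp$.

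Your doubt about that step is justified. The degenerate $1$-simplices of $E\simgp$ are exactly the $(s_0g,1)$ with $g\in\simgp_0$; in particular your formula for $s_0(e_0)$ should read $(s_0g_0,1)$. So $(s_0(g_0)h,1)$ is degenerate if and only if $h\in s_0(\simgp_0)$. This holds for a constant group $\underline{\Gamma}$, which is the case used for directed $\Gamma$-graphs, but fails in general. For example, suppose $\simgp_1\neq s_0(\simgp_0)$ and $\simgp$ acts trivially on $(\Delta^1)^{\musSharp{}}$, which satisfies the hypothesis. Then any $h\in\simgp_1\setminus s_0(\simgp_0)$ moves the marked edge $(s_0g_0,1,m)$, with $m$ the non-degenerate edge, to an unmarked one.

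Your reading of $((E\simgp)_1\Box M)/\simgp_1$ as the set $q\bigl((E\simgp)_1\Box M\bigr)$ of orbits meeting the marked set needs no invariance. That is also what ``orbits of marked edges'' means. It reduces the lemma to the degenerate-edge check $s_0q(e_0,x)=q(s_0e_0,s_0x)$, which the paper leaves implicit.

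What invariance would add, when it holds, is that an edge upstairs is marked exactly when its orbit is. However, the proof of Proposition~\ref{prop Borel construction is a twisted Cartesian product} only evaluates $\varphi$ on marked representatives, so your version is all that is used later. Under your reading the hypothesis on $gs_0(x)$ plays no role in this lemma. It is genuinely needed in that proposition, to see that $\varphi[g,b,s_0x]=(b,gs_0x)$ is marked, rather than in Lemma~\ref{lemma naturality wrt group action} as you suggest.
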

\begin{proof}
It suffices to show that the $\simgp$-action on $E\simgp\times X$ preserves the marked edges in $(E\simgp)_1\Box M$. 
Remember that marked edges of $(E\simgp)_1\Box M$ are of two types:
\[
\begin{split}
(g_1,g_0,s_0(m)) \quad&\text{for $g_i\in G_i$ and $m\in X_0$},  \\
 (s_0(g_0),1, m) \quad & \text{for $g_0\in G_0$ and $m\in M$}.
\end{split}
\]

Let $h\in \simgp_1$. In the first case,  we have 
\[
h(g_1,g_0,s_0(m))=(g_1h,g_0, h^{-1}s_0(m))).
\]
Since $h^{-1}s_0(m)$ is degenerate by assumption, this simplex is marked in $(E\simgp)_1\Box M$.

In the second case,
we compute
\[
h(s_0(g_0),1, m)=(s_0(g_0)h,1, h^{-1}m).
\]
Here $(s_0(g_0)h,1)$ is still degenerate in $EG$, and since $m\in M$ and the marked edges in $M$ is $\simgp$-invariant, we have $h^{-1}m\in M$. So again the simplex $h(s_0(g_0),1, m)$ is marked.

Therefore, the marked edges are preserved under the $\simgp$-action, which descends a marking on the quotient simplicial set.  
\end{proof}

\begin{definition}
Let $(X,M)$ be a marked simplicial $\simgp$-set such that the condition of the lemma above. The Borel construction of $(X,M)$ is the marked simplicial set
\[
(X,M)_\simgp\colonequals \left(E\simgp\times_\simgp X, ((E\simgp)_1\Box M)/\simgp_1
\right).\] 
The path homology (resp. cohomology) of $(X,M)_\simgp$ is called the \emph{equivariant path homology} (resp. \emph{cohomology}) of~$(X,M)$.
\end{definition}

On the other hand, recall that there is a twisting function
\[
\tau_G\colon (BG)_{n+1}=G_n\times\cdots\times G_0\to G_n
\]
given by the projection onto the first factor.
Suppose $G$ acts on a marked simplicial set $(X,M)$ from the left. We obtain a marked twisted Cartesian product
\[
(BG)^{\musSharp{}} \Box_{\tau_G} (X,M).
\]

\begin{proposition}\label{prop Borel construction is a twisted Cartesian product}
Under the hypothesis of Lemma~\ref{lemma condition for marked borel construction},
there exists an isomorphism of marked simplicial sets
\[
(X,M)_G \cong (BG)^{\musSharp{}} \Box_{\tau_G} (X,M).
\]
\end{proposition}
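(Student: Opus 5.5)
The plan is to start from the classical isomorphism of simplicial sets $E G\times_G X\cong BG\times_{\tau_G} X$ (see~\cite[\S 21]{May}) and check that it matches the two markings. Here $(EG)_n=G_n\times\cdots\times G_0$ and $(BG)_n=G_{n-1}\times\cdots\times G_0$. The action is $h(g_n,\ldots,g_0,x)=(g_nh,g_{n-1},\ldots,g_0,h^{-1}x)$, so every orbit contains a unique representative with first coordinate $g_n=1$. This gives a bijection
\[
\Theta\colon (EG\times_G X)_n\to (BG)_n\times X_n,\qquad [g_n,g_{n-1},\ldots,g_0,x]\mapsto \big((g_{n-1},\ldots,g_0),\, g_nx\big).
\]

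The first step is to verify that $\Theta$ is simplicial. For $i>0$ and for the degeneracies this is immediate: these operators act componentwise and do not disturb the normalisation $g_n=1$. For $\partial_0$, apply the standard formula $\partial_0(g_n,\ldots,g_0)=((\partial_0 g_n)g_{n-1},\partial_0 g_{n-2},\ldots)$ in $EG$ to the representative $(1,g_{n-1},\ldots,g_0,x)$. This yields $(g_{n-1},\ldots,\partial_0x)$, which renormalises to $(1,\ldots,g_{n-1}\partial_0 x)$. That is precisely $\partial_0$ in $BG\times_{\tau_G}X$ with $\tau_G(g_{n-1},\ldots,g_0)=g_{n-1}$. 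This recovers the classical fact; I would cite it and only sketch the computation.

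The second step, which is the real content, is to compare the markings. By Lemma~\ref{lemma condition for marked borel construction}, the marked edges of $(X,M)_G$ are the orbits of two types of edges.

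\emph{Type (a):} $(g_1,g_0,s_0m)$ with $m\in X_0$. Under $\Theta$ it maps to $((g_0),\, g_1s_0m)$. By hypothesis $g_1s_0m$ is degenerate, hence lies in $s_0(X_0)$, so the image is in $(BG)_1\times s_0(X_0)$.

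\emph{Type (b):} $(s_0g_0,1,m)$ with $m\in M$. It maps to $((1),\,(s_0g_0)m)$. Here $(1)=s_0(\ast)$ is the degenerate edge of $BG$, since $(BG)_0$ is a point, and $(s_0g_0)m\in M$ by $G$-invariance. So the image is in $s_0((BG)_0)\times M$.

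For the converse inclusion, take $((g_0),s_0m)$; its preimage is the orbit of $(1,g_0,s_0m)$, which is of type (a). Likewise $((1),m)$ with $m\in M$ has preimage the orbit of $(1,1,m)=(s_0 1,1,m)$, which is of type (b). Therefore $\Theta$ and $\Theta^{-1}$ both preserve markings.

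The main obstacle is bookkeeping rather than substance: matching the conventions for $EG$, $BG$, $\tau_G$ and the right/left actions so that $\partial_0$ agrees exactly. In particular, I need to check whether the normalisation should be $g_nx$ or $g_n^{-1}x$, and whether $\partial_0$ in $EG$ multiplies on the left or the right. I would fix this by checking $n=1$ directly against the formula $\partial_0(x,f)=(\partial_0x,\tau(x)\partial_0 f)$, and then adjust the normalisation if needed.
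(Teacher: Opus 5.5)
Your proposal is correct and takes the same approach as the paper: the same map $[g,b,x]\mapsto(b,gx)$ with inverse $(b,x)\mapsto[1_n,b,x]$, a brief check that it is simplicial, and the same two-type verification that markings are preserved in both directions. One harmless slip: in your face formula for $EG$ the trailing coordinates $g_{n-2},\ldots,g_0$ should be unchanged rather than $\partial_0 g_{n-2},\ldots$, and your renormalised computation in fact uses the unchanged coordinates.
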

\begin{proof}
Write $[g,b,x]$ for the equivalence class of $(g,b,x)\in EG_n\times X_n$.
Define maps
\[
\varphi\colon EG\times_G X\to BG\times_{\tau_G} X \quad \varphi[g,b,x]=(b,gx)
\]
and
\[\psi\colon BG\times_{\tau_G} X\to EG\times_G X,\quad
\psi(b, x)=\big[1_n,b, x\big],
\]
where $1_n$ is the group identity of $G_n$.
A quick check show that $\varphi, \psi$ are simplicial and inverse to each other.

To see $\varphi$ and $\psi$ preserve the marked edges, we consider several cases.
If $(g,b)$ is degenerate in $EG_1$ and $x\in M$, then $b$ is degenerate in $BG_1$ and $gx\in M$. 
If $x$ is degenerate in~$X_1$, then $gx$ is also degenerate by hypothesis. Thus $\varphi$ preserves marking.
Conversely, if $x$ is degenerate, then $[1_n,b,x]$ is marked in $EG\times_G X$. If~$b$ is degenerate in $BG_1$ and $x\in M$, then $(1_1,b)$ is degenerate in $EG_1$. Hence $\psi(b,x)$ is marked. Thus $\psi$ preserves marked edges as well.
\end{proof}

\begin{corollary}\label{cor model for equivariant homology}
There is an isomorphism of chain complexes
\[
\N(BG)\otimes_{t_{\sz}} \Omega_\bullet(X,M)\to \Omega_\bullet\big((X,M)_G\big),
\]
which is natural with respect to the inclusion of \(G\)-invariant marked simplicial
subsets $(X',M')\hookrightarrow (X,M)$.
\end{corollary}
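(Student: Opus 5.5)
The corollary should follow by combining Proposition~\ref{prop Borel construction is a twisted Cartesian product} with Theorem~\ref{theorem Szczarba's twisted shuffle map}, applied to the universal twisting function $\tau_G\colon BG\to G$. Throughout I keep the standing hypothesis of Lemma~\ref{lemma condition for marked borel construction}: $gs_0(x)$ is degenerate for all $g\in G_1$ and $x\in X_0$. This is exactly the hypothesis of Lemma~\ref{lemma Szczarba's twisted shuffle map}, with $F=X$.

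\textbf{Step 1 (apply the theorem).} Taking the base to be $BG$, the twisting function to be $\tau_G$, and the fibre to be $(X,M)$, Theorem~\ref{theorem Szczarba's twisted shuffle map} gives a chain isomorphism
\[
\psi_\omega\colon \N(BG)\otimes_{t_{\sz}}\Omega_\bullet(X,M)\to \Omega_\bullet\big((BG)^{\musSharp{}}\Box_{\tau_G}(X,M)\big).
\]
Here $t_{\sz}$ is Szczarba's twisting cochain built from $\tau_G$.

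\textbf{Step 2 (transport to the Borel construction).} Proposition~\ref{prop Borel construction is a twisted Cartesian product} gives an isomorphism of marked simplicial sets $(BG)^{\musSharp{}}\Box_{\tau_G}(X,M)\cong (X,M)_G$. I need that an isomorphism of marked simplicial sets induces an isomorphism of path chain complexes. This holds because a marked simplicial map $f$ sends allowed simplices to allowed simplices: the edges $x_{k-1,k}$ are mapped to $f(x)_{k-1,k}$, which lie in the target marking. Also $f_*$ commutes with $\rho$ and $\partial$, so $f_*$ maps $\A_*$ into $\A_*$ and hence $\Omega_\bullet$ into $\Omega_\bullet$. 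Applying this to the map and its inverse gives an isomorphism $\Omega_\bullet\big((BG)^{\musSharp{}}\Box_{\tau_G}(X,M)\big)\cong\Omega_\bullet((X,M)_G)$. Composing it with $\psi_\omega$ yields the required isomorphism.

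\textbf{Step 3 (naturality).} Let $(X',M')\subset(X,M)$ be a $G$-invariant marked simplicial subset; it inherits the degeneracy hypothesis. Theorem~\ref{theorem Szczarba's twisted shuffle map} already provides the commutative square for $\psi_\omega$. It remains to check that the isomorphisms $\varphi$ of Proposition~\ref{prop Borel construction is a twisted Cartesian product} commute with the inclusions. This is immediate from the formula $\varphi[g,b,x]=(b,gx)$, which is visibly compatible with restricting to $X'$. Pasting the two squares gives naturality.

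The only point needing care is that Step 2 uses the marking functoriality of $\Omega_\bullet$; all the substantive work is already done in Theorem~\ref{theorem Szczarba's twisted shuffle map}.
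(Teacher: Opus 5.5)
Your proposal is correct and follows the paper's own proof. The paper likewise applies Theorem~\ref{theorem Szczarba's twisted shuffle map} to $(BG)^{\musSharp{}}\Box_{\tau_G}(X,M)$ and then transports along the marked isomorphism of Proposition~\ref{prop Borel construction is a twisted Cartesian product}. Your additional checks make explicit what the paper's two-line proof leaves implicit: that marked isomorphisms induce isomorphisms of $\Omega_\bullet$, that the degeneracy hypothesis passes to $X'$, and that $\varphi$ commutes with the inclusions.
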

\begin{proof}
Applying Theorem~\ref{theorem Szczarba's twisted shuffle map} to the marked twisted Cartesian product \[(BG)^{\musSharp{}}\Box_{\tau_G} (X,M)\]
yields an isomorphism
\[
\N(BG)\otimes_{t_{\sz}} \Omega_\bullet(X,M)\to 
\Omega_\bullet((BG)^{\musSharp{}}\Box_{\tau_G} (X,M)).
\]
Then the assertion follows from Proposition~\ref{prop Borel construction is a twisted Cartesian product}.
\end{proof}
    
\section{Borel construction of directed graphs}

A directed graph $\mathcal{G}=(V, E)$ consists of a set of vertices $V$ and a set of directed edges $E\subset V\times V$. 
In this work, following~\cite{ivanov2024simplicial} we assume $E^D=\{(v,v)\mid v\in V\}\subset E$, which differs from the usual convention in graph theory. An edge $(v,v)$ is \emph{degenerate}. A non-degenerate edge $(u,v)\in E$ is depicted as $u\to v$,
while degenerate edges are not drawn (one may think of them as ``ghost loops'').

\subsection{Review of GLMY path chains}
Consider the free~\(\kk\)-module \(\mathcal{R}_n(V)\) generated by all \((n+1)\)-tuples of vertices such that every adjacent vertices are distinct. The basis element of \(\mathcal{R}_n(V)\) corresponding to the tuple $(v_0,v_1,\ldots, v_n)$ is denoted by $e_{v_0,v_1,\ldots, v_n}$. 
The boundary operator $\partial\colon \mathcal{R}_n(V)\to \mathcal{R}_{n-1}(V)$ is defined as
\begin{equation}\label{eq boundary operator}
\partial e_{v_0,v_1,\ldots, v_n}=\sum_{\substack{0\leq i\leq n\\{v}_{i-1}\neq v_{i+1}}}(-1)^ie_{v_0,\ldots,{v}_{i-1},v_{i+1},\ldots, v_n}.
\end{equation} 

For \(n\ge 0\), an \emph{\(n\)-path} in \(\mathcal{G}\) is an $(n+1)$-tuple
$(v_0,v_1,\ldots,v_n)$
such that 
\[\text{\((v_{i-1},v_i)\in E\) and \(v_{i-1}\neq v_i\) for all \(1\le i\le n\).}\] 
Denote by \(\mathcal{A}_n(\mathcal{G})\subset \mathcal{R}_n(V)\) the submodule generated by \(n\)-paths of~\(\mathcal{G}\). In general the operator $\partial$ in~\eqref{eq boundary operator} does not preserve $\mathcal{A}_n(\mathcal{G})$.

To obtain a chain complex, define \(\Omega_n(\mathcal{G})\subset \mathcal{A}_n(\mathcal{G})\) consisting of those elements whose boundary lies in \(\mathcal{A}_{n-1}(\mathcal{G})\), i.e.,
\[
\Omega_n(\mathcal{G})=\{x\in\mathcal{A}_n(\mathcal{G})\mid \partial x\in \mathcal{A}_{n-1}(\mathcal{G}) \}.
\]
The restriction of \(\partial\) to \(\Omega_n(\mathcal{G})\) yields a chain complex
\[
(\Omega_\bullet(\mathcal{G}),\partial).
\]
Its homology is the \emph{GLMY path homology} of \(\mathcal{G}\), denoted as~\(\PH_*(\mathcal{G})\).

The GLMY path homology admits a natural interpretation in the language of path homology of marked simplicial sets. To a directed graph \(\mathcal{G}=(V,E)\) we associate a marked simplicial set
\[
\mkd{\mathcal{G}}=(\Nrv(\mathcal{G}),E).
\]
Here $\Nrv(\G)$ is the nerve of $\G$, a simplicial set whose $n$-simplices are $(n+1)$-tuples of vertices 
$(v_0,v_1,\ldots, v_n)$,
where there is a path from $v_{k-1}$ to $v_k$ for all~$1\leq k\leq n$. 
For $0\leq i\leq n$, the face operator \[\partial_i\colon 
\Nrv_n(\G)\to \Nrv_{n-1}(\G)\]
deletes the $i$-th vertex, and the degeneracy operator \[s_i\colon 
\Nrv_n(\G)\to \Nrv_{n+1}(\G)\]
duplicates the $i$-th vertex; see~\cite[Definition~3.2]{HR2025}. 
The set of marked edges is~$E$, consisting of all non-degenerate edges $u\to v$ and the degenerate edges \((v,v)\).

\begin{bigremark}
By definition, allowed simplices of \(\mkd{\mathcal{G}}\) correspond to paths in~\(\mathcal{G}\), and hence the path homology of \(\mkd{\mathcal{G}}\) agrees with the GLMY path homology of \(\mathcal{G}\).
The same holds for the corresponding cohomologies.
\end{bigremark}

\subsection{Borel construction}
A discrete group $\Gamma$ is said to act from the left on a directed graph $\mathcal{G}=(V,E)$ if $\Gamma$ acts from the left on the vertex set~$V$ in such a way that directed edges are preserved. That is, for any $(u,v)\in E$ and $\gamma\in \Gamma$, we have 
 $\gamma(u,v)=(\gamma u,\gamma v)\in E.$
Such a graph is called a directed $\Gamma$-graph.

\begin{example}
A directed $\Gamma$-graph can be obtained from a simplicial $\Gamma$-complex as follows.
As described in~\cite{GMY}, one can assign a directed graph $\mathcal{G}_K$ to a simplicial complex $K$, by letting
the vertices of $\mathcal{G}_K$ be the simplices of $K$, and there is a directed edge from $\sigma$ to~$\tau$ when $\tau$ is a codimension-one face of $\sigma$.
If $K$ is a simplicial $\Gamma$-complex (in other words, $\Gamma$ acts on the vertex set of $K$ in a way that it preserves its simplices), then the induced action on simplices turns $\mathcal{G}_K$ into a directed $\Gamma$-graph.
\end{example}

Let $\mathcal{G}=(V,E)$ be a directed $\Gamma$-graph.
Regard $\Gamma$ as a constant simplicial group~$\underline{\Gamma}$, where for any $n\geq 0$, we have $(\underline{\Gamma})_n=\Gamma$, and all face and degeneracy maps are the identity maps. 
Then the marked simplicial set $\mkd{\mathcal{G}}=(\Nrv(\mathcal{G}),E)$ inherits a natural $\underline{\Gamma}$-action. Explicitly, for any simplex $(v_0,\ldots, v_n)\in \Nrv_n(\mathcal{G})$ and any $\gamma\in (\underline{\Gamma})_n=\Gamma$, the action of $\underline{\Gamma}$ on $\Nrv(\mathcal{G})$ is given by
\[\gamma(v_0,\ldots, v_n)=(\gamma v_0,\ldots, \gamma v_n).\]
Moreover, if $(v_0,v_1) \in E$, namely, either $v_0=v_1$ or there is a directed edge from~$v_0$ to $v_1$, then for any $\gamma\in (\underline{\Gamma})_1=\Gamma$, we have
\[
\gamma (v_0,v_1)=(\gamma v_0, \gamma v_1)\in E,
\]
since either $\gamma v_0=\gamma v_1$  or there is a directed edge from $\gamma v_0$ to $\gamma v_1$.

Let $E\underline{\Gamma}\to B\underline{\Gamma}$ be the universal $\underline{\Gamma}$-bundle. 
For simplicity, we shall write $E\Gamma\to B\Gamma$ for this simplicial fibre bundle in what follows.
We will check that the box product of marked simplicial sets $(E{\Gamma})^{\musSharp{}}$ and $(\Nrv(\mathcal{G}), E)$ satisfies the condition of Lemma~\ref{lemma condition for marked borel construction}.

\begin{lemma}
If $v\in V$ and $\gamma\in (\underline{\Gamma})_1=\Gamma$, then $\gamma s_0(v)$ is degenerate. Thus, the pair \[\left(E{\Gamma}\times_{{\Gamma}} \Nrv(\mathcal{G}), \left((E{\Gamma})_1\Box E\right)/\Gamma\right)\] is a marked simplicial set.
\end{lemma}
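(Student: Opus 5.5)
The plan is to verify the first claim by a direct computation in the nerve, and then obtain the second claim from Lemma~\ref{lemma condition for marked borel construction}, applied with $\simgp=\underline{\Gamma}$ and $(X,M)=(\Nrv(\mathcal{G}),E)$.

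For the first claim, take $v\in V=\Nrv_0(\mathcal{G})$. The degeneracy $s_0$ duplicates the $0$-th vertex, so $s_0(v)=(v,v)\in\Nrv_1(\mathcal{G})$. The action of $\underline{\Gamma}$ on $\Nrv(\mathcal{G})$ is vertexwise, and an element $\gamma\in(\underline{\Gamma})_1=\Gamma$ acts on $1$-simplices by $\gamma(v_0,v_1)=(\gamma v_0,\gamma v_1)$. Hence
\[
\gamma s_0(v)=(\gamma v,\gamma v)=s_0(\gamma v),
\]
which is degenerate. More conceptually, the action is a simplicial map $\underline{\Gamma}\times\Nrv(\mathcal{G})\to\Nrv(\mathcal{G})$. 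Since the degeneracies of $\underline{\Gamma}$ are identities, we have $(\gamma,s_0v)=s_0(\gamma,v)$ in the product, and this is mapped to $s_0(\gamma v)$.

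For the second claim, we need the hypotheses of Lemma~\ref{lemma condition for marked borel construction}. First, $\underline{\Gamma}$ must act on the marked simplicial set $(\Nrv(\mathcal{G}),E)$ in the sense of Definition~\ref{def simplicial action on a marked simplicial complex}. I would check that the vertexwise action is well defined on the nerve: if there is a directed path from $v_{k-1}$ to $v_k$, then applying $\gamma$ edge by edge gives a directed path from $\gamma v_{k-1}$ to $\gamma v_k$, because $\Gamma$ preserves $E$. The action commutes with faces and degeneracies, since these only delete or duplicate vertices and the structure maps of $\underline{\Gamma}$ are identities. The marked set $E$ is $\Gamma$-invariant by the definition of a directed $\Gamma$-graph, as already noted before the statement. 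The remaining hypothesis, that $\gamma s_0(v)$ is degenerate, is exactly the first claim.

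With these hypotheses in place, Lemma~\ref{lemma condition for marked borel construction} gives directly that $\bigl(E\Gamma\times_\Gamma\Nrv(\mathcal{G}),((E\Gamma)_1\Box E)/\Gamma\bigr)$ is a marked simplicial set. There is no real obstacle here. The only point needing care is the simpliciality of the action on the nerve, and in particular that $\gamma$ maps reachability to reachability, which holds because $\Gamma$ maps edges to edges.
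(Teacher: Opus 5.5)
Your proposal is correct and follows the paper's proof: the paper also computes $\gamma s_0(v)=(\gamma v,\gamma v)=s_0(\gamma v)$ and then invokes Lemma~\ref{lemma condition for marked borel construction}. Your extra checks, that the vertexwise action is well defined and simplicial on $\Nrv(\mathcal{G})$ and preserves $E$, are correct; they spell out what the paper takes as established in the paragraph before the statement.
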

\begin{proof}
It is evident that  $\gamma s_0(v)=\gamma (v,v)= (\partial_0\gamma v,\partial_0\gamma v)=s_0(\partial_0\gamma v)$. Therefore, Lemma~\ref{lemma condition for marked borel construction} implies 
$\left(E{\Gamma}\times_{{\Gamma}} \Nrv(\mathcal{G}), \left((E{\Gamma})_1\Box E\right)/\Gamma\right)$ is a marked simplicial set.
\end{proof}

\begin{definition}\label{def borel construction for digraphs}
Let $\mathcal{G}$ be a directed $\Gamma$-graph. The \emph{Borel construction} of $\mathcal{G}$ is defined as the marked simplicial set 
\[\mathcal{G}_{\Gamma}\colonequals\left(E{\Gamma}\times_{\Gamma} \Nrv(\mathcal{G}), \left((E{\Gamma})_1\Box E\right)/\Gamma\right).\]  
Define the \emph{(Borel-type) equivariant path
homology and cohomology} of $\mathcal{G}$ to be \[\text{$\PH^\Gamma_*(\mathcal{G})\colonequals \PH_*(\G_\Gamma)$~ and~ $\PH^*_\Gamma(\mathcal{G})\colonequals \PH^*(\G_\Gamma)$.}
\]
\end{definition}

\smallskip

Corollary~\ref{cor model for equivariant homology} provides a natural isomorphism of chain complexes
\[\N(B\Gamma)\otimes_{t_{\sz}}\Omega_\bullet(\mathcal{G})\to \Omega_\bullet(\mathcal{G}_\Gamma).\]
Before we describe the differential of $\N(B\Gamma)\otimes_{t_{\sz}}\Omega_\bullet(\mathcal{G})$, we fix the notation for the induced $\Gamma$-action on path chains. 

The $\Gamma$-action on $\mathcal{G}$ induces a left action on the path chain complex $\Omega_\bullet(\mathcal{G})$
by
\begin{equation}\label{eq gamma action on path chains}
\gamma\cdot \left(\sum_{\p} \alpha_{\p} e_{\p}\right)=
\sum_{\p} \alpha_{\p} e_{\gamma\cdot \p}
\end{equation}
where $\gamma\in \Gamma$ and $\sum_{\p} \alpha_{\p} e_{\p}\in \Omega_\bullet(\G)$ with $\alpha_{\p}\in \kk$ and $\p$ ranging over paths of $\mathcal{G}$.

\begin{proposition}\label{prop model for equivariant homology digraph}
There is an isomorphism of chain complexes
\[
(\N(B\Gamma)\otimes_{t_{\sz}}\Omega_\bullet(\mathcal{G}),\partial_{t_{\sz}})\to(\Omega_\bullet(\mathcal{G}_\Gamma),\partial),
\]
which is natural with respect to any $\Gamma$-equivariant inclusion $\mathcal{G}'\subset \mathcal{G}.$
Moreover, the differential~$\partial_{t_{\sz}}$ has the form
\begin{equation}\label{eq twisted tensor product differential Borel}
\begin{split}
\partial_{t_{\sz}}(x\otimes w)
=
\begin{cases}
\begin{aligned}
[~]\otimes (\partial w) &\qquad \text{ if } x=[~]\in (B\Gamma)_0;
\end{aligned}\\[0.5em]  
\begin{aligned}
(\partial x)&\otimes w+(-1)^{n} x\otimes (\partial w)\\[0.25em]
+&(-1)^n (x_1,\ldots,x_{n-1})\otimes (x_n^{-1}\cdot w-w)\quad \text{if } n\geq 1,
\end{aligned}
\end{cases}
\end{split}
\end{equation}
where $x=(x_1,\ldots,x_n)\in (B\Gamma)_n$, $w\in \Omega_\bullet(\G)$ and $x_n^{-1}\cdot w$ is defined in~\eqref{eq gamma action on path chains}.
\end{proposition}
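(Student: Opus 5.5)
The plan has two parts: the isomorphism itself, and the explicit form of $\partial_{t_{\sz}}$.

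For the isomorphism, I apply Corollary~\ref{cor model for equivariant homology} to the marked simplicial $\underline{\Gamma}$-set $\mkd{\mathcal{G}}=(\Nrv(\mathcal{G}),E)$. The preceding lemma shows that $\gamma s_0(v)$ is degenerate, so the hypothesis of Lemma~\ref{lemma condition for marked borel construction} holds. The Borel construction $(\Nrv(\mathcal{G}),E)_{\underline{\Gamma}}$ is by definition $\mathcal{G}_\Gamma$, and the path complex $\Omega_\bullet(\Nrv(\mathcal{G}),E)$ is identified with the GLMY complex $\Omega_\bullet(\mathcal{G})$ by the remark in the review subsection. This gives the chain isomorphism. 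Naturality follows because a $\Gamma$-equivariant inclusion $\mathcal{G}'\subset\mathcal{G}$ induces a $\underline{\Gamma}$-invariant marked simplicial subset $\mkd{\mathcal{G}'}\subset\mkd{\mathcal{G}}$, to which the naturality clause of Theorem~\ref{theorem Szczarba's twisted shuffle map} applies.

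For the formula, I compute $t_{\sz}$ for the constant group $\underline{\Gamma}$. In $\N(\underline{\Gamma})$ every simplex of positive degree is degenerate, so $\N_k(\underline{\Gamma})=0$ for $k\geq 1$ and $t_{\sz}$ vanishes on $\N_n(B\Gamma)$ for $n\geq 2$. In degree one, $t_{\sz}(x)=\sigma(x)-1_0=\tau_G(x)^{-1}-1$.

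Next I use the Alexander--Whitney coproduct on $\N(B\Gamma)$. The only term of $\Delta(x)$ with $x^{(2)}$ of degree one is $x^{(1)}\otimes x^{(2)}=\partial_n x\otimes (\partial_0)^{n-1}x$. In bar notation, $\partial_n(x_1,\ldots,x_n)=(x_1,\ldots,x_{n-1})$ and $(\partial_0)^{n-1}x=(x_n)$. With the convention $(B\Gamma)_{n+1}=G_n\times\cdots\times G_0\to G_n$, the projection $\tau$ picks out the last bar entry, so $\tau((x_n))=x_n$.

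The remaining step is the action $\mu$ of $\N_0(\underline{\Gamma})$ on $\Omega_\bullet(\mathcal{G})$. Through the shuffle map, a degree-zero element $\gamma$ acts by $s_0^q\gamma$, which is $\gamma$ itself, acting simplexwise. This matches \eqref{eq gamma action on path chains}.

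Finally, I collect signs from \eqref{eq twisted tensor product differential}: the term $-(1\otimes\mu)(1\otimes t\otimes 1)(\Delta\otimes1)$ contributes $-(-1)^{\,n-1}(x_1,\ldots,x_{n-1})\otimes(x_n^{-1}w-w)$, where the Koszul sign comes from moving $t$, of degree $-1$, past $x^{(1)}$ of degree $n-1$. This equals $(-1)^n(x_1,\ldots,x_{n-1})\otimes(x_n^{-1}\cdot w-w)$, as claimed. For $x=[~]$, both $\partial x$ and the twisting term vanish. The main obstacle is this bookkeeping: confirming the face conventions of $B\Gamma$, namely which bar entry is $\tau$ and which face is $\partial_n$, and confirming the Koszul sign convention implicit in \eqref{eq twisted tensor product differential}. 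I would check both against Szczarba's degree-one formula.
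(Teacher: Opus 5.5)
Your proof is correct. For the isomorphism and its naturality it is the paper's argument: Corollary~\ref{cor model for equivariant homology} applied to $\mkd{\mathcal{G}}$ with the constant group $\underline{\Gamma}$.

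The one genuine difference is how you show $t_{\sz}=0$ on $\N_n(B\Gamma)$ for $n\ge 2$.
\begin{itemize}
\item The paper notes that for a constant group every $\sz_{\bi}(x)$ equals $\sigma(x)\sigma(\partial_0x)\cdots\sigma((\partial_0)^{n-1}x)$, and then uses $\sum_{\bi\in S_{n-1}}(-1)^{|\bi|}=0$.
\item You use instead that every positive-dimensional simplex of $\underline{\Gamma}$ is degenerate, so $\N_k(\underline{\Gamma})=0$ for $k\ge 1$.
\end{itemize}
Your route needs nothing about the operators $D_{\bi,k}$, and it is the more robust one. The signed count over $S_{n-1}$ vanishes only for $n\ge 3$: for $n=2$, $S_1=\{(0)\}$ is a single sequence with sign $+1$. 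So at $n=2$ the paper's argument as written does not apply. There the vanishing holds only because $\sz_{(0)}(x)\in\underline{\Gamma}_1$ is degenerate, which is exactly your argument. The paper's computation gives slightly more information, namely the common value of the $\sz_{\bi}$ and vanishing already on unnormalised chains for $n\ge 3$. That extra information is not needed here.

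You also spell out what the paper compresses into ``applying \eqref{eq twisted tensor product differential}'':
\begin{itemize}
\item the unique Alexander--Whitney term $\partial_n x\otimes(\partial_0)^{n-1}x$ whose second factor has degree one;
\item the Koszul sign $(-1)^{n-1}$ from passing $t_{\sz}$ over $x^{(1)}$;
\item the fact that $\gamma\in\N_0(\underline{\Gamma})$ acts through $s_0^q\gamma=\gamma$ vertexwise, matching \eqref{eq gamma action on path chains}.
\end{itemize}

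Your caution about conventions is justified, since the paper never writes down the face maps of $B\Gamma$. The reading you adopt ($\partial_0$ deletes $x_1$, $\partial_n$ deletes $x_n$, as in May's construction of $BG$) is the one that yields the stated formula.

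One small slip: under that reading, $\tau_G$ (projection onto the first factor) returns the first bar entry $x_1$ of an $n$-simplex, not the last. This is harmless, because you only evaluate $\tau$ on the $1$-simplex $(x_n)$, where $\tau((x_n))=x_n$ in any case.
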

\begin{proof}
Since $\underline{\Gamma}$ is the constant simplicial group, the operators
$\sz_{\bi}$ defined in~\eqref{eq Szczarba operators} reduce to
\begin{equation}\label{eq Szczarba operators for constant simplicial group}
\sz_{\bi}(x)
=
\sigma(x)\,
\sigma(\partial_0 x)\cdots
\sigma((\partial_0)^{n-1}x),
\quad\text{for } x\in (B\Gamma)_n,\; n\ge 1.
\end{equation}
Consequently, Szczarba’s twisting cochain $t_{\sz}\colon \N_n(B\Gamma)\to \N_{n-1}(\Gamma) $ has the form
\begin{equation}\label{eq Szczarba’s twisting cochain for constant simplicial group}
t_{\sz}(x)=\begin{cases}
    0 & \text{if } n=0;\\
    x_1^{-1}-1 & \text{if } n=1;\\
    \sum_{\bi\in S_{n-1}}(-1)^{|\bi|}\sz_{\bi}(x)=0 & \text{if } n\geq 2.
\end{cases}
\end{equation}
Here for $n=1$, the simplex $x=(x_1)\in (B\Gamma)_1$ is non-degenerate (i.e., $x_1\neq 1$, the identity of $\Gamma$).
For $n\geq 2$, the vanishing of $t_{\sz}(x)$ follows from~\eqref{eq Szczarba operators for constant simplicial group} and the equality $\sum_{\bi\in S_{n-1}}(-1)^{|\bi|}=0$ for $n\ge 2$.

Applying~\eqref{eq twisted tensor product differential} to the twisting cochain~\eqref{eq Szczarba’s twisting cochain for constant simplicial group}
yields the stated formula for the differential $\partial_{t_{\sz}}$.
\end{proof}

\begin{example}\label{ex:borel-flipped-edge}
Assume $\kk=\Z$. Let $\mathcal{G}$ be the directed graph with vertex set $\{0,1\}$ and directed edges
$0\to 1$, $1\to 0.$
\[
\begin{tikzcd}
0\ar[rr,bend left = 10mm] && 1 \ar[ll,bend left = 10mm]
\end{tikzcd}
\]
Suppose $\Gamma=\mathbb{Z}/2=\langle \alpha\rangle$ acts on $\mathcal{G}$ by 
$\alpha\cdot 0=1 $ and $\alpha\cdot 1=0.$
This action preserves the directed edges and hence defines an action on $\mathcal{G}$.

For each $n\geq 0$, there is a unique non-degenerate $n$-simplex of $B\Z_2$, namely
\begin{equation}\label{eq nondegenerate simplices of BZ2}
x_0=[\,\,],\qquad
x_n=(\alpha,\ldots,\alpha)\in (B\Z_2)_n \text{ with } n\ge 1.
\end{equation}
For $k\geq 0$, let $\p_k$ and $\q_k$ denote the paths of length $k$ starting at the vertex $0$ and the vertex $1$, respectively.
In particular, $\p_0=(0)$ and $\q_0=(1)$.
The $\mathbb{Z}/2$–action exchanges $\p_k$ and $\q_k$.

The path differential~\eqref{eq boundary operator} satisfies
\[
\begin{aligned}
\partial e_{\p_0}&=0, & \partial e_{\q_0}&=0,\\
\partial e_{\p_k}&=e_{\q_{k-1}}+(-1)^k e_{\p_{k-1}}, ~&
\partial e_{\q_k}&=e_{\p_{k-1}}+(-1)^k e_{\q_{k-1}}.
\end{aligned}
\]
Consequently, for each $k\ge 0$, we have
$\Omega_k(\mathcal G)=\Z\cdot\{e_{\p_k},e_{\q_k}\}.$
The equivariant path homology of $\mathcal{G}$ is computed by the chain complex
\[
\N_n(B\Gamma)\otimes_{t_{\sz}}\Omega_k(\mathcal{G}),
\]
with differential given in Proposition~\ref{prop model for equivariant homology digraph}.

If $n=0$ and $k\geq 1$, then
\[\begin{split}
\partial_{t_{\sz}}(x_0\otimes e_{\p_{0}})=0,
\quad
\partial_{t_{\sz}}(x_0\otimes e_{\q_{0}})=0,\\
\partial_{t_{\sz}}(x_0\otimes e_{\p_{k}})=x_0\otimes (e_{\q_{k-1}}+(-1)^k e_{\p_{k-1}}),\\
\partial_{t_{\sz}}(x_0\otimes e_{\q_{k}})=x_0\otimes (e_{\p_{k-1}}+(-1)^ke_{\q_{k-1}}).
\end{split}
\]
If $n,k\geq 1$, then
\[\begin{split}
\partial_{t_{\sz}}(x_n\otimes e_{\p_{0}})&=x_{n-1}\otimes (e_{\p_{0}}+(-1)^ne_{\q_{0}}),\\
\partial_{t_{\sz}}(x_n\otimes e_{\q_{0}})&=x_{n-1}\otimes ((-1)^ne_{\p_{0}}+e_{\q_{0}}),\\
\partial_{t_{\sz}}(x_n\otimes e_{\p_{k}})&=
(-1)^n x_n\otimes ((-1)^{k}e_{\p_{k-1}}+e_{\q_{k-1}})+x_{n-1}\otimes (e_{\p_{k}}+(-1)^ne_{\q_{k}}),
\\
\partial_{t_{\sz}}(x_n\otimes e_{\q_{k}})&=
(-1)^nx_n\otimes 
(e_{\p_{k-1}}+(-1)^{k}e_{\q_{k-1}})
+x_{n-1}\otimes ((-1)^ne_{\p_{k}}+e_{\q_{k}}).
\end{split}
\]


Fix $i\geq 1$. Order the basis of the $i$-th component $(\N(B\Gamma)\otimes \Omega(\mathcal{G}))_i$ as follows:
\[
\{x_0\otimes e_{\p_i}, x_0\otimes e_{\q_i},
x_1\otimes e_{\p_{i-1}}, x_1\otimes e_{\q_{i-1}},\ldots,
x_i\otimes e_{\p_0}, x_i\otimes e_{\q_0}
\}.
\]
With this ordering, the differentials \[\partial_{t_{\sz}}\colon (\N(B\Gamma)\otimes \Omega(\mathcal{G}))_{i}\to (\N(B\Gamma)\otimes \Omega(\mathcal{G}))_{i-1}\] are represented by the matrix~\eqref{matrix odd} if $i$ is odd, and by \eqref{matrix even} if $i$ is even.
Here the two types of matrices are
\begin{equation}\label{matrix odd}
\begin{pmatrix}
    A_1 & -A_1 & 0& 0& 0&\cdots & 0\\
    0& B_2&-B_2 & 0&0&\cdots &0\\
    0 & 0&A_3& -A_3&0&\cdots &0\\
    0&0&0&B_4&-B_4&\cdots &0\\
    \vdots &\vdots &\vdots & \vdots &\ddots &\ddots&\vdots\\
    0 & 0&0& 0&\cdots & A_i &-A_i
\end{pmatrix} 
\end{equation}
and
\begin{equation}\label{matrix even}
\begin{pmatrix}
    A_1 & B_1 & 0& 0& 0&\cdots & 0& 0\\
    0& B_2&A_2 & 0&0&\cdots &0& 0\\
    0 & 0&A_3& B_3&0&\cdots &0& 0\\
    0&0&0&B_4&A_4&\cdots &0& 0\\
    \vdots &\vdots &\vdots & \vdots &\ddots &\ddots&\vdots &\vdots\\
    0 & 0&0& 0&\cdots & A_{i-1} &B_{i-1}& 0\\
    0 & 0&0& 0&\cdots &0 & B_{i} &A_{i}
\end{pmatrix} 
\end{equation}
where  
\[A_\ell=\begin{pmatrix}
    (-1)^i & 1\\
    1&(-1)^i
\end{pmatrix}\quad \text{and} \quad B_\ell=\begin{pmatrix}
    (-1)^i & -1\\
    -1&(-1)^i
\end{pmatrix},\, \text{ $1\leq \ell\leq i$}.\]

A direct computation shows that~\eqref{matrix odd} and \eqref{matrix even}
have Smith norm forms
\[
\begin{pmatrix}
    I_i & 0\\
    0&0
\end{pmatrix}\quad\text{and}\quad
\begin{pmatrix}
    I_i& 0 &0\\
   0 & 2&0\\
    0&0 &0
\end{pmatrix}
\]
where $I_i$ is the identity matrix of rank $i.$
Consequently, we conclude
\[
\PH^\Gamma_i(\mathcal{G};\Z)=\begin{cases}
    \Z &\text{ if }i=0,\\
    \Z/2\Z  &\text{ if $i>0$ is odd},\\
    0 & \text{ otherwise}.
\end{cases}
\]
\end{example}

\begin{example}
We consider the directed graph $\mathcal{G}=(V,E)$ depicted below.
\[
\begin{tikzcd}
    2 &\ar[l] 0\ar[r] &1\\
    4\ar[u]\ar[d] &&3\ar[d]\ar[u] \\
    6&7\ar[l]\ar[r]&5
\end{tikzcd}
\]

Let $\Gamma=\Z_2=\langle\alpha\rangle$.
The action of $\alpha$ on $V$ is swapping the vertices
\[
0\leftrightarrow 7,\qquad
1\leftrightarrow 6,\qquad
2\leftrightarrow 5,\qquad
3\leftrightarrow 4.
\]
This action preserves directed edges, and hence $\mathcal{G}$ is a directed $\Z_2$-graph.

Since $\mathcal{G}$ admits no directed paths of length $\ge 2$, we have
\[
\Omega_i(\mathcal{G})=0 \quad \text{for } i\ge 2.
\]
Hence for $n\geq 0$, we have 
\begin{equation}\label{example 2 basis}
\begin{split}
\N_n(B\Z_2)\otimes \Omega_0(\mathcal{G})&=\kk\cdot \{\,x_n\otimes e_v \mid v\in V\,\},
\\
\N_n(B\Z_2)\otimes \Omega_1(\mathcal{G})&=\kk\cdot \{\,x_n\otimes e_{vw} \mid (v,w)\in E,\; v\ne w\,\}.
\end{split}\end{equation}
We shall apply~\eqref{eq twisted tensor product differential Borel} to the above basis elements.

If $n=0$, the differentials are given by
\[
\partial_{t_{\sz}}(x_0\otimes e_v)=0,
\qquad
\partial_{t_{\sz}}(x_0\otimes e_{vw})=x_0\otimes (e_w-e_v).
\]
Therefore $\PH^\Gamma_0(\mathcal{G};\kk)=\kk.$

If $n\geq 1$, then for $v\in V$, we have
\begin{equation}\label{eq differential vertex}
\partial_{t_{\sz}}(x_n\otimes e_v)=
x_{n-1}\otimes (e_v+(-1)^ne_{\alpha\cdot v})
\end{equation}
and for $(v,w)\in E$ with $v\neq w$, we have
\[
\partial_{t_{\sz}}(x_n\otimes e_{vw})=
(-1)^n x_n\otimes  (e_w-e_v)+
x_{n-1}\otimes (e_{vw}+(-1)^ne_{\alpha\cdot v,\alpha\cdot w}).
\]

The degree~1 cycles are spanned by the following elements
\[
\begin{aligned}
& e_{02}-e_{42}+e_{46}-e_{76}+e_{75}-e_{35}+e_{31}-e_{01},\\
& x_1\otimes (e_0+e_7),\;
x_1\otimes (e_1+e_6),\;
x_1\otimes (e_2+e_5),\;
x_1\otimes (e_3+e_4).
\end{aligned}
\]
By~\eqref{eq differential vertex}, the latter four cycles $x_1\otimes (x_v+x_{\alpha\cdot v})$ are boundaries, so the first cycle generates the homology.
Hence
$\PH_1^\Gamma(\mathcal{G};\kk)\cong \kk$.

\smallskip

When $n\geq 2$, the differential
\[\partial_{t_{\sz}}\colon
\N_{n-i}(B\Z_2)\otimes \Omega_i(\mathcal{G})
\xrightarrow{}
\N_{n-i-1}(B\Z_2)\otimes \Omega_i(\mathcal{G})
\oplus
\N_{n-i}(B\Z_2)\otimes \Omega_{i-1}(\mathcal{G})
\]
is represented by a
$16\times 16$ integer matrix with respect to the bases~\eqref{example 2 basis}. Its
Smith normal form is
\[
\begin{pmatrix}
I_8 & 0\\
0 & 0
\end{pmatrix}.
\]
Consequently, we conclude
\[
\PH_n^\Gamma(\mathcal{G};\kk)=
\begin{cases}
\kk & \text{ if  } n=0,1;\\
 0 & \text{ otherwise}.   
\end{cases}
\]
\end{example}

\subsection{The dual case}
In this subsection, we work over a field $\FF$ and assume $\mathcal{G}$ is a finite directed $\Gamma$-graph, where $\Gamma$ is finite.
Let $\Omega^\bullet(\mathcal{G})$ and~$\Omega^\bullet(\mathcal{G}_\Gamma)$ be the path cochain algebra over $\FF$. Then Proposition~\ref{prop dual} implies
\begin{equation}\label{eq dual}
\Omega^\bullet(\mathcal{G})\cong \Omega_\bullet(\mathcal{G})^\vee\quad\text{and}\quad
\Omega^\bullet(\mathcal{G}_\Gamma)\cong \Omega_\bullet(\mathcal{G}_\Gamma)^\vee
\end{equation}
where 
$\Omega_\bullet(\mathcal{G})^\vee=\hom_\FF(\Omega_\bullet(\mathcal{G}),\FF)$ and $\Omega_\bullet(\mathcal{G}_\Gamma)^\vee=\hom_\FF(\Omega_\bullet(\mathcal{G}_\Gamma),\FF)$.

Since each $\N_k(B\Gamma)$ and $\Omega_\ell(\mathcal{G})$ are finitely generated (due to the assumption $\Gamma$ and $\mathcal{G}$ are finite), the dual of $\N(B\Gamma)\otimes_{t_{\sz}}\Omega_\bullet(\mathcal{G})$
identifies with the tensor product of the
duals, i.e.,
\[
\bigl(\N(B\Gamma)\otimes\Omega_\bullet(\mathcal{G})\bigr)^\vee
\cong
\N^\bullet(B\Gamma)\otimes\Omega_\bullet(\mathcal{G})^\vee .
\]
Together with the identifications~\eqref{eq dual}, we obtain an isomorphism
\begin{equation}\label{eq identification}
\bigl(\N(B\Gamma)\otimes\Omega_\bullet(\mathcal{G})\bigr)^\vee\cong \N^\bullet(B\Gamma)\otimes\Omega^\bullet(\mathcal{G}).
\end{equation}

To describe the dual differential on $\N^\bullet(B\Gamma)\otimes\Omega^\bullet(\mathcal{G})$, we first define the dual of the~$\Gamma$-action on $\Omega_\bullet(\mathcal{G})$ given in~\eqref{eq gamma action on path chains}. 
Let $\theta\colon \Omega^\bullet(\mathcal{G})\to \Omega_\bullet(\mathcal{G})^\vee$ denote the first isomorphism in~\eqref{eq dual}.
For $\gamma\in \Gamma$ and $u\in \Omega^\bullet(\mathcal{G})$, 
define $\gamma\wedge u\in \Omega^\bullet(\mathcal{G})$ by the formula
\[
\theta(\gamma\wedge u) (w)=\theta(u)(\gamma\cdot w),\quad w\in \Omega_\bullet(\mathcal{G}).
\]

For a non-degenerate $k$-simplex $x=(x_1,\ldots, x_k)$, let $x^*\in \N^{k}(B\Gamma)$ be the dual basis element corresponding to $x$.
The dual of~\eqref{eq twisted tensor product differential Borel} has the form
\begin{multline}
\label{eq the dual differential}
\delta_{t_{\sz}}(x^*\otimes u)=\delta(x^*)\otimes u+(-1)^kx\otimes \partial(u)\\
+\sum_{h\in \Gamma\setminus\{1\}}(-1)^k(x_1,\ldots,x_k,h)^*\otimes (h^{-1}\wedge u-u).
\end{multline}

We denote by $\N^\bullet(B\Gamma)\otimes_{t_{\sz}}\Omega^\bullet(\mathcal{G})$ for the cochain complex $\N^\bullet(B\Gamma)\otimes \Omega^\bullet(\mathcal{G})$ equipped with the differential $\delta_{t_{\sz}}$.


\begin{proposition}
\label{prop Cohomological model}
Suppose that $\mathcal{G}$ is a finite directed $\Gamma$-graph, where $\Gamma$ is finite. Over a field $\FF$, there is an isomorphism of cochain complexes
\[
\Omega^\bullet(\mathcal{G}_\Gamma)
\xrightarrow{}
\N^\bullet(B\Gamma)\otimes_{t_{\sz}}\Omega^\bullet(\mathcal{G}),
\]
which is natural with respect to  $\Gamma$-equivariant inclusions of directed graphs.
\end{proposition}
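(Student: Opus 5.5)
The plan is to dualise the chain isomorphism of Proposition~\ref{prop model for equivariant homology digraph} over $\FF$ and then apply the identifications~\eqref{eq dual} and~\eqref{eq identification}. Write $\Psi\colon \N(B\Gamma)\otimes_{t_{\sz}}\Omega_\bullet(\mathcal{G})\to \Omega_\bullet(\mathcal{G}_\Gamma)$ for that isomorphism. Its $\FF$-linear dual
\[
\Psi^\vee\colon \Omega_\bullet(\mathcal{G}_\Gamma)^\vee\to \bigl(\N(B\Gamma)\otimes_{t_{\sz}}\Omega_\bullet(\mathcal{G})\bigr)^\vee
\]
is an isomorphism of cochain complexes, where the target carries the dual of $\partial_{t_{\sz}}$. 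We then precompose with the isomorphism $\Omega^\bullet(\mathcal{G}_\Gamma)\cong\Omega_\bullet(\mathcal{G}_\Gamma)^\vee$ from Proposition~\ref{prop dual} and postcompose with~\eqref{eq identification}. The result is an isomorphism of graded vector spaces $\Omega^\bullet(\mathcal{G}_\Gamma)\to \N^\bullet(B\Gamma)\otimes\Omega^\bullet(\mathcal{G})$, and this is our candidate map.

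The first step is to check that the finiteness hypotheses make~\eqref{eq identification} valid degreewise. $\Gamma$ is finite, so each $\N_k(B\Gamma)$ is finite dimensional. $\mathcal{G}$ is finite, so each $\Omega_\ell(\mathcal{G})$ is finite dimensional. Each total degree involves only finitely many pairs $(k,\ell)$, so $(\bigoplus_{k+\ell=n}\N_k\otimes\Omega_\ell)^\vee=\bigoplus_{k+\ell=n}\N^k\otimes\Omega_\ell^\vee$ holds with no completion issues.

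The second step, which is the main one, is to show that the transported differential equals $\delta_{t_{\sz}}$ in~\eqref{eq the dual differential}. We do this term by term from~\eqref{eq twisted tensor product differential Borel}. The term $\partial x\otimes w$ dualises to $\delta(x^*)\otimes u$. The term $(-1)^n x\otimes\partial w$ dualises to $(-1)^k x^*\otimes \partial^\vee u$; here $\theta$ is a cochain isomorphism by Proposition~\ref{prop dual}, so the dual differential on $\Omega_\bullet(\mathcal{G})^\vee$ corresponds to the differential of $\Omega^\bullet(\mathcal{G})$.

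For the twisting term, we evaluate $(x^*\otimes u)$ on $\partial_{t_{\sz}}(y\otimes w)$ for non-degenerate $y=(y_1,\dots,y_{k+1})$. The only contribution comes from $(-1)^{k+1}(y_1,\dots,y_k)\otimes(y_{k+1}^{-1}w-w)$, and it is nonzero only when $(y_1,\dots,y_k)=x$. Writing $h=y_{k+1}\neq1$ (non-degeneracy of $y$ is equivalent to $x$ being non-degenerate together with $h\ne 1$), this gives a sum over $h\in\Gamma\setminus\{1\}$ of $(x_1,\dots,x_k,h)^*\otimes(h^{-1}\wedge u-u)$, where $h^{-1}\wedge u$ is defined exactly so that $\theta(h^{-1}\wedge u)(w)=\theta(u)(h^{-1}w)$. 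The point needing care is the sign: a factor $(-1)^{k+1}$ against the $(-1)^k$ in~\eqref{eq the dual differential}. We would fix it by adopting the same Koszul sign convention for the dual differential, $\delta f=-(-1)^{|f|}f\circ\partial$ or the plain transpose, that is used to write~\eqref{eq the dual differential}, applied uniformly to all three terms. If the signs do not match directly, we twist the identification by $x^*\otimes u\mapsto(-1)^{\epsilon(k,\ell)}x^*\otimes u$ to absorb them.

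Finally, naturality: a $\Gamma$-equivariant inclusion $\mathcal{G}'\subset\mathcal{G}$ gives the commutative square of Proposition~\ref{prop model for equivariant homology digraph}. Dualising reverses arrows and turns the inclusions into restrictions. The isomorphisms of Proposition~\ref{prop dual} are natural, since they are induced by $g^\vee$ from the inclusion $\Omega_\bullet\subset\N$, and so is~\eqref{eq identification}. The square of cochain complexes therefore commutes.
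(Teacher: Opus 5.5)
Your core argument is the paper's. You dualise the chain isomorphism of Proposition~\ref{prop model for equivariant homology digraph}, precompose with $\Omega^\bullet(\mathcal{G}_\Gamma)\cong\Omega_\bullet(\mathcal{G}_\Gamma)^\vee$ from Proposition~\ref{prop dual}, postcompose with \eqref{eq identification} (valid degreewise by finiteness, as you note), and get naturality by dualising the commutative square. The paper stops at that point: it treats $\delta_{t_{\sz}}$ as the transpose of $\partial_{t_{\sz}}$, and \eqref{eq the dual differential} only as a description of it. That much of your proposal is fine.

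\textbf{The gap is the sign repair in your second step; as written it would fail.} Write $\delta x^*=\epsilon(k)\,x^*\circ\partial$ for the convention on $\N^\bullet(B\Gamma)$. When you transpose $\partial_{t_{\sz}}$, take the ratio of the coefficient of $\sum_{h\neq1}(x_1,\dots,x_k,h)^*\otimes(h^{-1}\wedge u-u)$ to the coefficient of $\delta(x^*)\otimes u$. This ratio is $(-1)^{k+1}\epsilon(k)$. It does not depend on an overall sign on the total dual differential, nor on Koszul signs in the pairing, which cancel in the ratio.
\begin{itemize}
\item For the plain transpose the ratio is $(-1)^{k+1}$.
\item For $\delta f=-(-1)^{|f|}f\circ\partial$ it is $+1$.
\end{itemize}
Neither is the printed $(-1)^k$. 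Your fallback cannot change this either. Both terms lie in bidegree $(k+1,\ell)$, so a rescaling $x^*\otimes u\mapsto(-1)^{\epsilon(k,\ell)}x^*\otimes u$ multiplies them by the same sign.

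The discrepancy is not cosmetic. Take plain transposes on the factors, $\Gamma=\Z/2=\langle\alpha\rangle$ swapping two isolated vertices, $x_n=(\alpha,\dots,\alpha)$, and $\mathrm{char}\,\FF\neq2$. Then the printed formula gives $\delta_{t_{\sz}}^2(x_0^*\otimes u)=4\,x_2^*\otimes(\alpha\wedge u-u)\neq0$, so it is not a differential.

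What reconciles your computation with the paper is a choice of convention, not a re-signing of the identification. Use the naive pairing $(x^*\otimes u)(y\otimes w)=x^*(y)\,\theta(u)(w)$. Then \eqref{eq the dual differential} is exactly the transpose of $\partial_{t_{\sz}}$ when every dual differential (on $\N^\bullet(B\Gamma)$, on $\Omega^\bullet(\mathcal{G})$ and on the total complex) is $\delta f=-f\circ\partial$. With plain transposes throughout, the twisting term must instead carry $(-1)^{k+1}$, as you found. Once the convention is fixed in either way, your second step is a routine check and the proof is complete.
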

\begin{proof}
By Proposition~\ref{prop model for equivariant homology digraph}, there is an
isomorphism of chain complexes
\begin{equation}\label{eq chain isomorphism equivariant homology digraph}
\N(B\Gamma)\otimes_{t_{\sz}}\Omega_\bullet(\mathcal{G})
\xrightarrow{\;\cong\;}
\Omega_\bullet(\mathcal{G}_\Gamma).
\end{equation}
Since $\FF$ is a field, the dual of a linear isomorphism is again a linear
isomorphism, with inverse given by dualizing the inverse map. Hence applying
the contravariant functor $\hom_{\FF}(-,\FF)$ yields an isomorphism of cochain
complexes
\[
\Omega_\bullet(\mathcal{G}_\Gamma)^\vee
\;\xrightarrow{\;\cong\;}
\bigl(\N(B\Gamma)\otimes_{t_{\sz}}\Omega_\bullet(\mathcal{G})\bigr)^\vee .
\]

The right-hand side is identified with $\N^\bullet(B\Gamma)\otimes_{t_{\sz}}\Omega^\bullet(\mathcal{G})$ due to the isomorphism of~\eqref{eq identification}, under which the
dual differential $\delta_{t_{\sz}}$ is described in
\eqref{eq the dual differential}.
The left-hand side is isomorphic to~$\Omega^\bullet(\mathcal{G}_\Gamma)$, which completes the proof.

Naturality with respect to $\Gamma$-equivariant inclusions of directed graphs
follows from the naturality of~\eqref{eq chain isomorphism equivariant homology digraph} and of the dualisation .
\end{proof}

\subsection{The relative case}
Let $\mathcal{G}'\subset \mathcal{G}$ be a $\Gamma$-invariant sub-directed graph. The relative Borel-type equivariant path homology of the pair $(\mathcal{G},\mathcal{G}')$ is defined as the homology of the quotient complex
\begin{equation}\label{eq relative chain complex}
\Omega_\bullet(\mathcal{G}_\Gamma,\mathcal{G}'_\Gamma)\colonequals \Omega_\bullet(\mathcal{G}_\Gamma)/\Omega_\bullet(\mathcal{G}'_\Gamma),
\end{equation}
and is denoted by $\PH^\Gamma_*(\mathcal{G},\mathcal{G}')$.

Proposition~\ref{prop model for equivariant homology digraph}
has the following relative version.

\begin{proposition}\label{prop relative version} Let $\kk$ be a principal ideal domain.
There is an isomorphism of chain complexes
\[
\N(B\Gamma)\otimes_{t_{\sz}}\Omega_\bullet(\mathcal{G},\mathcal{G}')\longrightarrow
\Omega_\bullet(\mathcal{G}_\Gamma,\mathcal{G}'_\Gamma).
\]
\end{proposition}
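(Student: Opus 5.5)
The plan is to deduce the relative statement from the absolute one, Proposition~\ref{prop model for equivariant homology digraph}, together with its naturality. Write $\Omega_\bullet(\mathcal{G},\mathcal{G}')\colonequals\Omega_\bullet(\mathcal{G})/\Omega_\bullet(\mathcal{G}')$. First I check that this quotient carries an induced $\N(\Gamma)$-module structure. Since $\mathcal{G}'$ is $\Gamma$-invariant, the action~\eqref{eq gamma action on path chains} preserves $\Omega_\bullet(\mathcal{G}')\subset\Omega_\bullet(\mathcal{G})$. Equivalently, the inclusion $\mathcal{X}(\mathcal{G}')\subset\mathcal{X}(\mathcal{G})$ is $\underline{\Gamma}$-equivariant, so by Lemma~\ref{lemma actions} the action map $\mu$ restricts. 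Hence the twisted tensor product $\N(B\Gamma)\otimes_{t_{\sz}}\Omega_\bullet(\mathcal{G},\mathcal{G}')$ is defined, with differential given by the same formula~\eqref{eq twisted tensor product differential Borel}.

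Next I form the commutative square supplied by the naturality clause of Proposition~\ref{prop model for equivariant homology digraph}, equivalently Theorem~\ref{theorem Szczarba's twisted shuffle map}. The horizontal maps are the isomorphisms $\psi_\omega$ for $\mathcal{G}'$ and for $\mathcal{G}$. The vertical maps are the inclusions $\N(B\Gamma)\otimes_{t_{\sz}}\Omega_\bullet(\mathcal{G}')\hookrightarrow\N(B\Gamma)\otimes_{t_{\sz}}\Omega_\bullet(\mathcal{G})$ and $\Omega_\bullet(\mathcal{G}'_\Gamma)\hookrightarrow\Omega_\bullet(\mathcal{G}_\Gamma)$.

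The step that needs care is exactness of the left column. Tensoring the short exact sequence
\[0\to\Omega_\bullet(\mathcal{G}')\to\Omega_\bullet(\mathcal{G})\to\Omega_\bullet(\mathcal{G},\mathcal{G}')\to 0\]
with $\N(B\Gamma)$ must stay exact, and here I use that each $\N_n(B\Gamma)$ is a free $\kk$-module. Indeed, $\N_n(B\Gamma)\cong \C_n/\mathsf{D}_n$ is free on the non-degenerate simplices, so the tensored sequence is exact degreewise. The twisted differential is compatible with both maps because they are module maps, so this is a short exact sequence of chain complexes. Therefore the cokernel of the left vertical inclusion is $\N(B\Gamma)\otimes_{t_{\sz}}\Omega_\bullet(\mathcal{G},\mathcal{G}')$. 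By definition~\eqref{eq relative chain complex}, the cokernel of the right vertical inclusion is $\Omega_\bullet(\mathcal{G}_\Gamma,\mathcal{G}'_\Gamma)$.

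Finally, $\psi_\omega$ for $\mathcal{G}$ maps the image of the left inclusion into the image of the right one, so it descends to a chain map on cokernels. The resulting diagram of two short exact sequences has its left and middle vertical maps isomorphisms, so the Five Lemma makes the induced map on quotients an isomorphism of chain complexes. No Künneth-type argument is needed, since the absolute maps are already isomorphisms rather than just quasi-isomorphisms. The only real subtlety is the freeness or flatness that keeps the tensored sequence exact, and it holds over any PID.
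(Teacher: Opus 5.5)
Your proposal is correct and follows the paper's own argument. Both use the short exact sequence $0\to\Omega_\bullet(\mathcal{G}')\to\Omega_\bullet(\mathcal{G})\to\Omega_\bullet(\mathcal{G},\mathcal{G}')\to 0$, its exactness after tensoring with the degreewise free $\N(B\Gamma)$, the naturality square from Proposition~\ref{prop model for equivariant homology digraph}, and passage to cokernels. The one difference is the last step: you apply the five lemma directly to the two short exact sequences of chain complexes, which yields the chain isomorphism the statement asserts. The paper's written proof instead goes through the long exact sequences in homology and only records a quasi-isomorphism.
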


\begin{proof}
The inclusion $\mathcal{G}'\subset \mathcal{G}$ induces a short exact sequence of chain complexes
\[
0 \longrightarrow \Omega_\bullet(\mathcal{G}') \longrightarrow \Omega_\bullet(\mathcal{G})
\longrightarrow \Omega_\bullet(\mathcal{G},\mathcal{G}') \longrightarrow 0.
\]

Each $\N_n(B\Gamma)$ is a free $\kk$-module and $\N(B\Gamma)$ is a direct summand of $\N_n(B\Gamma)$'s.
Therefore the ordinary tensor product functor $\N(B\Gamma)\otimes(-)$ is exact. 
Applying this to the above sequence, we obtain a short exact
sequence
\begin{equation}\label{eq short exact sequence 1}
0 \longrightarrow
\N(B\Gamma)\otimes\Omega_\bullet(\mathcal{G}') 
\longrightarrow
\N(B\Gamma)\otimes\Omega_\bullet(\mathcal{G})
\longrightarrow
\N(B\Gamma)\otimes\Omega_\bullet(\mathcal{G},\mathcal{G}') 
\longrightarrow 0.
\end{equation}

On the other hand, the Borel construction gives a short exact sequence
\begin{equation}\label{eq short exact sequence 2}
0 \longrightarrow
\Omega_\bullet(\mathcal{G}'_\Gamma) \longrightarrow
\Omega_\bullet(\mathcal{G}_\Gamma) \longrightarrow
\Omega_\bullet(\mathcal{G}_\Gamma,\mathcal{G}'_\Gamma) \longrightarrow 0.
\end{equation}
By Proposition~\ref{prop model for equivariant homology digraph}, the isomorphism
\[
\N(B\Gamma)\otimes_{t_{\sz}}\Omega_\bullet(\mathcal{G}) \to \Omega_\bullet(\mathcal{G}_\Gamma)
\]
is natural with respect to $\Gamma$-equivariant inclusions of directed graphs.
Applying this to the inclusion $G'\hookrightarrow G$, we obtain a commutative diagram of chain complexes
\[\begin{tikzcd} \N(B\Gamma)\otimes_{t_{\sz}}\Omega_\bullet(\mathcal{G}') \arrow[r,""] \arrow[d] & \Omega_\bullet(\mathcal{G}'_\Gamma) \arrow[d] \\ \N(B\Gamma)\otimes_{t_{\sz}}\Omega_\bullet(\mathcal{G}) \arrow[r,""] & \Omega_\bullet(\mathcal{G}_\Gamma), \end{tikzcd}\]
where the horizontal maps are quasi-isomorphisms and the vertical maps are
induced by inclusion. Passing to quotients, we obtain an induced morphism
\[
\N(B\Gamma)\otimes_{t_{\sz}}\big(\Omega_\bullet(\mathcal{G})/\Omega_\bullet(\mathcal{G}')
\big)
\to \Omega_\bullet(\mathcal{G}_\Gamma)/\Omega_\bullet(\mathcal{G}'_\Gamma),
\]
which is the asserted map
\begin{equation}\label{eq induced map}
\N(B\Gamma)\otimes_{t_{\sz}}\Omega_\bullet(\mathcal{G},\mathcal{G}')\to\Omega_\bullet(\mathcal{G}_\Gamma,\mathcal{G}'_\Gamma).
\end{equation}

The long exact sequence in homology associated
to~\eqref{eq short exact sequence 1} and~\eqref{eq short exact sequence 2} imply that the induced map~\eqref{eq induced map}
is also a quasi-isomorphism.
\end{proof}

Let $\mathcal{G}'\subset \mathcal{G}$ be a $\Gamma$-invariant directed subgraph and let $\kk=\FF$ be a field.
Define the relative cochain complexes by
\[\begin{split}
\Omega^\bullet(\mathcal{G},\mathcal{G}')&=\ker(\Omega_\bullet(\mathcal{G})^\vee\to \Omega_\bullet(\mathcal{G}')^\vee),\\
\Omega^\bullet(\mathcal{G}_\Gamma,\mathcal{G}'_\Gamma)&=\ker(\Omega_\bullet(\mathcal{G}_\Gamma)^\vee\to \Omega_\bullet(\mathcal{G}'_\Gamma)^\vee),
\end{split}\]
where $(-)^\vee$ is the Hom-dual over $\FF$.
Since $\FF$ is a field, these complexes are canonically isomorphic to the dual complexes
\[
\text{$\Omega^\bullet(\mathcal{G},\mathcal{G}')\cong \Omega_\bullet(\mathcal{G},\mathcal{G}')^\vee$ and 
$\Omega^\bullet(\mathcal{G}_\Gamma,\mathcal{G}'_\Gamma)\cong \Omega_\bullet(\mathcal{G}_\Gamma,\mathcal{G}'_\Gamma)^\vee$.}\]

\begin{proposition}
Under the same assumptions of Proposition~\ref{prop Cohomological model}, there is an isomorphism of cochain complexes
\[
\Omega^\bullet(\mathcal{G}_\Gamma,\mathcal{G}'_\Gamma)\to
\N^\bullet(B\Gamma)\otimes_{t_{\sz}}\Omega^\bullet(\mathcal{G},\mathcal{G}'),
\]
where the right-hand side denotes the tensor product $\N^\bullet(B\Gamma)\otimes \Omega^\bullet(\mathcal{G},\mathcal{G}')$ equipped the differential induced from~\eqref{eq the dual differential}.
\end{proposition}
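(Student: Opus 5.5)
We dualise the relative chain-level isomorphism of Proposition~\ref{prop relative version}, in the same way that Proposition~\ref{prop Cohomological model} was obtained from Proposition~\ref{prop model for equivariant homology digraph}.

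First, Proposition~\ref{prop relative version} gives an isomorphism of chain complexes
\[
\Psi\colon \N(B\Gamma)\otimes_{t_{\sz}}\Omega_\bullet(\mathcal{G},\mathcal{G}')\to\Omega_\bullet(\mathcal{G}_\Gamma,\mathcal{G}'_\Gamma).
\]
It is induced from the natural isomorphism $\psi_\omega$ of Corollary~\ref{cor model for equivariant homology} on the two short exact sequences~\eqref{eq short exact sequence 1} and~\eqref{eq short exact sequence 2}. Since both $\psi_\omega$ for $\mathcal{G}$ and for $\mathcal{G}'$ are isomorphisms, the Five Lemma on these sequences shows that $\Psi$ is a genuine isomorphism, not merely a quasi-isomorphism. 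Applying $\hom_\FF(-,\FF)$ then yields an isomorphism of cochain complexes
\[
\Omega_\bullet(\mathcal{G}_\Gamma,\mathcal{G}'_\Gamma)^\vee\xrightarrow{\Psi^\vee}\bigl(\N(B\Gamma)\otimes_{t_{\sz}}\Omega_\bullet(\mathcal{G},\mathcal{G}')\bigr)^\vee.
\]
On the left we use the stated canonical identification $\Omega^\bullet(\mathcal{G}_\Gamma,\mathcal{G}'_\Gamma)\cong\Omega_\bullet(\mathcal{G}_\Gamma,\mathcal{G}'_\Gamma)^\vee$. It holds because over a field the sequence~\eqref{eq short exact sequence 2} dualises to a short exact sequence, so the dual of the quotient is the kernel of restriction.

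Second, we identify the right-hand side. Since $\Gamma$ and $\mathcal{G}$ are finite, each $\N_k(B\Gamma)$ and each $\Omega_\ell(\mathcal{G},\mathcal{G}')$ is finite-dimensional. In each total degree the sum is finite, so the dual of the tensor product is $\N^\bullet(B\Gamma)\otimes\Omega_\bullet(\mathcal{G},\mathcal{G}')^\vee\cong\N^\bullet(B\Gamma)\otimes\Omega^\bullet(\mathcal{G},\mathcal{G}')$, exactly as in~\eqref{eq identification}.

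It remains to check that under this identification the dual of $\partial_{t_{\sz}}$ is the differential induced from~\eqref{eq the dual differential}; this is the main point needing care. Dualising the short exact sequence $0\to\Omega_\bullet(\mathcal{G}')\to\Omega_\bullet(\mathcal{G})\to\Omega_\bullet(\mathcal{G},\mathcal{G}')\to0$ exhibits $\N^\bullet(B\Gamma)\otimes\Omega^\bullet(\mathcal{G},\mathcal{G}')$ as the kernel of the restriction map $\N^\bullet(B\Gamma)\otimes_{t_{\sz}}\Omega^\bullet(\mathcal{G})\to\N^\bullet(B\Gamma)\otimes_{t_{\sz}}\Omega^\bullet(\mathcal{G}')$. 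That restriction is a cochain map, because it is the dual of the inclusion chain map, which is $\Gamma$-equivariant and compatible with $\partial_{t_{\sz}}$. Hence $\delta_{t_{\sz}}$ preserves the kernel, and the induced differential is precisely the dual of $\partial_{t_{\sz}}$ on the quotient. Concretely, in formula~\eqref{eq the dual differential} the operation $h^{-1}\wedge u$ preserves cochains vanishing on $\Omega_\bullet(\mathcal{G}')$, since $\Gamma$ preserves $\Omega_\bullet(\mathcal{G}')$. So each term of $\delta_{t_{\sz}}$ stays in the relative subcomplex.

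Composing the three isomorphisms gives the claimed isomorphism $\Omega^\bullet(\mathcal{G}_\Gamma,\mathcal{G}'_\Gamma)\to\N^\bullet(B\Gamma)\otimes_{t_{\sz}}\Omega^\bullet(\mathcal{G},\mathcal{G}')$. Naturality in $\Gamma$-equivariant inclusions follows from the naturality of $\Psi$ and of dualisation.
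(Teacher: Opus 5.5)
Your proposal is correct and follows the paper's route: dualise the relative chain isomorphism of Proposition~\ref{prop relative version}, use finiteness of $\Gamma$ and $\mathcal{G}$ to identify the dual of the tensor product with $\N^\bullet(B\Gamma)\otimes\Omega^\bullet(\mathcal{G},\mathcal{G}')$, and identify $\Omega_\bullet(\mathcal{G}_\Gamma,\mathcal{G}'_\Gamma)^\vee$ with $\Omega^\bullet(\mathcal{G}_\Gamma,\mathcal{G}'_\Gamma)$. You also add two steps the paper leaves implicit: the Five Lemma upgrade (the paper's proof of Proposition~\ref{prop relative version} only concludes a quasi-isomorphism, whereas a genuine isomorphism is what gets dualised here), and the check that $\delta_{t_{\sz}}$ preserves the relative subcomplex.
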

\begin{proof}
By Proposition~\ref{prop relative version},
there is an isomorphism of chain complexes
\[
\N(B\Gamma)\otimes_{t_{\sz}}\Omega_\bullet(\mathcal{G},\mathcal{G}')
\xrightarrow{}
\Omega_\bullet(\mathcal{G}_\Gamma,\mathcal{G}'_\Gamma).
\]
Applying $\hom_\FF(-,\FF)$ and using exactness over a filed yields an isomorphism
\[
\Omega_\bullet(\mathcal{G}_\Gamma,\mathcal{G}'_\Gamma)^\vee
\xrightarrow{}
\bigl(\N(B\Gamma)\otimes_{t_{\sz}}\Omega_\bullet(\mathcal{G},\mathcal{G}')\bigr)^\vee.
\]
Since $\Gamma$ and $\G$ are finite by assumption, each $\N_k(B\Gamma)$ and $\Omega_\ell(\mathcal{G},\mathcal{G}')$ are finitely generated. 
Therefore, the dual of tensor products is identified with the tensor
product of duals. Together with the identification $\Omega^\bullet(\mathcal{G}_\Gamma,\mathcal{G}'_\Gamma)\cong \Omega_\bullet(\mathcal{G}_\Gamma,\mathcal{G}'_\Gamma)^\vee$, this gives the result.
\end{proof}

Suppose that there are no directed edges from any vertex in $U=V_{\mathcal{G}}\setminus V_{\mathcal{G}'}$ to any vertices of~$\mathcal{G}'$. Then \cite[Lemma 3.10]{GJMY} 
shows that there is an isomorphism 
\begin{equation}\label{eq quotient chain complexes}
\Omega_\bullet(\mathcal{G},\mathcal{G}')\cong \Omega_\bullet^{U}(\mathcal{G})
\end{equation}
where 
$\Omega_\bullet^{U}(\mathcal{G})=\Omega_\bullet(\mathcal{G})\cap \A^U(\mathcal{G})$, and
 $\A^U(\mathcal{G})$ denotes the $\kk$-module generated by all paths $(v_0,\ldots, v_n)$ such that at least one of the vertices $v_i$ lies in $U$.

\smallskip

Substituting~\eqref{eq quotient chain complexes} into Proposition~\ref{prop relative version} gives the following result.

\begin{corollary}\label{cor:relative-supported-homology}
Let $\mathcal{G}$, $\mathcal{G}'$, and $U$ be as above.
\begin{enumerate}
\item If $\kk$ is a principal ideal domain, then
there is an isomorphism of chain complexes
\[
\N(B\Gamma)\otimes_{t_{\sz}}\Omega_\bullet^{U}(\mathcal{G})
\xrightarrow{}
\Omega_\bullet(\mathcal{G}_\Gamma,\mathcal{G}'_\Gamma).
\]
\item 
If $\kk=\FF$ is a field, then
there is an isomorphism of cochain complexes
\[
\Omega^\bullet(\mathcal{G}_\Gamma,\mathcal{G}'_\Gamma)
\xrightarrow{}
\N^\bullet(B\Gamma) \otimes_{t_{\sz}} \Omega_\bullet^{U}(\mathcal{G})^\vee.
\]
\end{enumerate}

\end{corollary}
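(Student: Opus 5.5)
The plan is to deduce Corollary~\ref{cor:relative-supported-homology} from Proposition~\ref{prop relative version} and its cochain counterpart, using the identification~\eqref{eq quotient chain complexes} of \cite[Lemma 3.10]{GJMY}. The only extra thing to check is that this identification is compatible with the $\Gamma$-action. Only then does the twisted tensor product over $\Omega^U_\bullet(\mathcal{G})$ make sense and agree with the one over $\Omega_\bullet(\mathcal{G},\mathcal{G}')$.

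\emph{Step 1: $\Gamma$-compatibility.} Since $\mathcal{G}'$ is $\Gamma$-invariant, the complement $U$ is $\Gamma$-invariant as well. The hypothesis that no edge goes from $U$ to $\mathcal{G}'$ is therefore preserved by the action. The action~\eqref{eq gamma action on path chains} sends a path with a vertex in $U$ to another such path. Hence $\A^U(\mathcal{G})$ and $\Omega^U_\bullet(\mathcal{G})=\Omega_\bullet(\mathcal{G})\cap\A^U(\mathcal{G})$ are $\Gamma$-submodules. The isomorphism~\eqref{eq quotient chain complexes} is the composite
\[
\Omega^U_\bullet(\mathcal{G})\hookrightarrow\Omega_\bullet(\mathcal{G})\to\Omega_\bullet(\mathcal{G})/\Omega_\bullet(\mathcal{G}').
\]
Both maps commute with the $\Gamma$-action, so the composite is a $\Gamma$-equivariant chain isomorphism.

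\emph{Step 2: transport the twisted differential.} By Proposition~\ref{prop model for equivariant homology digraph}, the differential~\eqref{eq twisted tensor product differential Borel} on $\N(B\Gamma)\otimes_{t_{\sz}}(-)$ involves only three ingredients: the differential of $\N(B\Gamma)$, the differential of the coefficient complex, and the $\Gamma$-action through the term $x_n^{-1}\cdot w-w$. A $\Gamma$-equivariant chain isomorphism $\phi$ therefore induces a chain isomorphism $1\otimes\phi$ between the twisted tensor products. Composing
\[
\N(B\Gamma)\otimes_{t_{\sz}}\Omega^U_\bullet(\mathcal{G})\xrightarrow{1\otimes\phi}\N(B\Gamma)\otimes_{t_{\sz}}\Omega_\bullet(\mathcal{G},\mathcal{G}')
\]
with the map of Proposition~\ref{prop relative version} gives part~(1).

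\emph{Step 3: the field case.} Over $\FF$, I dualise the isomorphism $\phi$ to get $\Omega^\bullet(\mathcal{G},\mathcal{G}')\cong\Omega_\bullet(\mathcal{G},\mathcal{G}')^\vee\cong\Omega^U_\bullet(\mathcal{G})^\vee$, and this is compatible with the dual actions $\gamma\wedge-$. I then substitute this into the preceding proposition for $\Omega^\bullet(\mathcal{G}_\Gamma,\mathcal{G}'_\Gamma)$. The dual differential~\eqref{eq the dual differential} is built only from $\delta$ on $\N^\bullet(B\Gamma)$, the dual differential, and the dual action, so it is carried over. Alternatively, one can dualise part~(1) directly, using the finiteness assumptions to identify the dual of the tensor product with the tensor product of the duals.

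The main obstacle is Step~1, specifically confirming that the identification of \cite[Lemma 3.10]{GJMY} is the natural composite above and is therefore equivariant rather than an abstract isomorphism. I would make this explicit by writing the map as inclusion followed by projection, and note that the $\Gamma$-invariance of $\mathcal{G}'$ and $U$ suffices. Everything else is formal.
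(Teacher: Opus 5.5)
Your proposal is correct and follows the paper's route. The paper deduces the corollary in one line from Proposition~\ref{prop relative version}, the identification~\eqref{eq quotient chain complexes} and its dual, and your Steps~1--3 spell out the $\Gamma$-equivariance of that identification (and of its dual), which the paper leaves implicit. One small precision: $\Omega^U_\bullet(\mathcal{G})$ is not closed under the restricted boundary (e.g.\ $\partial e_{vu}=e_u-e_v$ for an edge $v\to u$ with $v\in V_{\mathcal{G}'}$, $u\in U$), so the inclusion in your composite is only a map of graded $\Gamma$-modules, and the differential on $\Omega^U_\bullet(\mathcal{G})$ is the transported one ($\partial$ followed by projection onto $\A^U(\mathcal{G})$), which is still $\Gamma$-equivariant because $U$ is $\Gamma$-stable.
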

\begin{proof}
This follows immediately from Proposition~\ref{prop relative version},
the identification~\eqref{eq quotient chain complexes} and its dual.
\end{proof}

\bibliographystyle{plain}

\end{document}